\newtheorem{thm}{Theorem}[section]
\newtheorem{cor}[thm]{Corollary}
\newtheorem{lemma}[thm]{Lemma}
\numberwithin{equation}{section}
\theoremstyle{definition}
\newtheorem{definition}[thm]{Definition}
\newcommand{\bR}{{\mathbb{R}}}
\newcommand{\bZ}{{\mathbb{Z}}}
  \newcommand{\C}{{\mathcal{C}}}
  \newcommand{\D}{{\mathcal{D}}}
  \newcommand{\F}{{\mathcal{F}}}
  \newcommand{\M}{{\mathcal{M}}}
\renewcommand{\S}{{\mathcal{S}}}
  \newcommand{\T}{{\mathcal{T}}}
  \newcommand{\U}{{\mathcal{U}}}
\begin{document}

\setcounter{tocdepth}{1}

 \title[The generic minimal rigidity of a partially triangulated torus]{The generic minimal rigidity of a partially triangulated torus}

\author[J. Cruickshank, D. Kitson and S.C. Power]{J. Cruickshank, D. Kitson and S.C. Power}

\address{Dept.\ Math.\ Stats.\\ Lancaster University\\
Lancaster LA1 4YF \\U.K. }

\email{s.power@lancaster.ac.uk}

\email{james.cruickshank@nuigalway.ie}

\email{d.kitson@lancaster.ac.uk}

\thanks{2010 {\it  Mathematics Subject Classification.}
52C25. 05C25 \\
Key words and phrases: rigidity,  triangulated torus, triangulated surface\\
Partly supported by EPSRC grant  EP/J008648/1}

\maketitle

\begin{abstract}
A simple graph  is   $3$-rigid if its generic bar-joint frameworks in $\bR^3$ are infinitesimally rigid. Necessary and sufficient conditions are obtained for the minimal $3$-rigidity of a simple graph which is obtained from the $1$-skeleton of a triangulated torus by the deletion of edges interior to a triangulated disc.
\end{abstract}


\section{Introduction}
The graph of a triangulated sphere is generically $3$-rigid in the sense that any generic placement of the vertices in three-dimensional Euclidean space determines a bar-joint framework
which is continuously rigid.
This generic version of Cauchy's rigidity theorem for convex polyhedra follows from Dehn's determination \cite{dehn} of the infinitesimal rigidity of convex triangulated polyhedra. See also Gluck \cite{glu}.
In fact these graphs are minimally $3$-rigid (generically isostatic) in view of their flexibility on the removal of any edge. 

Generalising this, Fogelsanger \cite{fog} has
shown that a finite simple graph given by the $1$-skeleton of a triangulated compact surface without boundary is $3$-rigid.
The proof uses combinatorial edge contraction reduction of the graph together with the fact that $3$-rigidity is preserved by the inverse moves of vertex splitting. The methods also extend to higher dimensions. 
However, with the exception of the sphere the triangulated surface graphs are over-constrained, in the sense that $|E|> 3|V|-6$, and so it is natural to seek a combinatorial characterisation of minimal $3$-rigidity for the graphs of compact surfaces with boundaries.
We obtain such a characterisation here for graphs derived from torus graphs by the excision of the interior edges of a triangulated disc.
The precise definition of these graphs is given in Section \ref{s:surfacegraphs}. 

\begin{center}
\begin{figure}[ht]
\centering
\includegraphics[width=6cm]{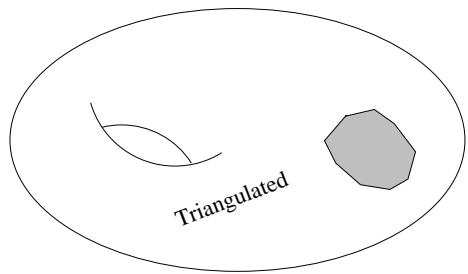}
\caption{A torus graph with a single superficial hole (shaded).}
\label{f:9v}
\end{figure}
\end{center} 

We make use of new methods, some of which have also been useful for modified triangulated spheres with holes and blocks \cite{cru-kit-pow}.
In particular we represent graphs in terms of \emph{face graphs},
which here are planar graphs with certain boundary vertices and edges identified,
and we consider reductions through edge contraction homotopy
and division over critical separating cycles.
However, in view of the toroidal topology there are 
a number of new considerations. In particular, edge cycles need not separate the graph and the boundary of the hole may be improper with subpaths wrapping around the torus with nontrivial homology. In fact for the minimally $3$-rigid graphs there are $17$ forms of hole boundary associated with which there are $17$ forms of critical separating cycle.
In addition we give a detailed analysis of  small  torus with hole graphs which have at most $9$ vertices and which in fact display all of these boundary types.

The main theorem may be stated as follows. 

\begin{thm}\label{t:mainthm}
Let $G$ be a torus graph with a single hole. 
Then the following conditions are equivalent.
\begin{enumerate} 
\item[(i)]$G$ is minimally $3$-rigid.
\item[(ii)] $G$ is $(3,6)$-tight.
\item[(iii)]  $G$ is constructible from $K_3$ by vertex splitting.
\end{enumerate}
\end{thm}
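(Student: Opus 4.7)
The plan is to establish the three implications in the standard cyclic order $(iii) \Rightarrow (i) \Rightarrow (ii) \Rightarrow (iii)$, with the last being the substantive direction.

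For $(iii) \Rightarrow (i)$ I would invoke Whiteley's theorem that vertex splitting preserves generic infinitesimal rigidity in $\bR^3$. Since $K_3$ is trivially minimally $3$-rigid and each vertex split adds one vertex and three edges, the Maxwell equality $|E|=3|V|-6$ is preserved at every step, so the terminal graph $G$ is not only $3$-rigid but also edge-minimal. For $(i) \Rightarrow (ii)$, minimal infinitesimal $3$-rigidity forces both the global count $|E(G)|=3|V(G)|-6$ and the sparsity inequality $|E(H)| \le 3|V(H)|-6$ on every subgraph $H$ with at least three vertices, which is by definition $(3,6)$-tightness.

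The real work is $(ii) \Rightarrow (iii)$, which I would prove by induction on $|V(G)|$. The base of the induction is supplied by the paper's detailed inspection of torus-with-hole graphs on at most $9$ vertices, which realises each of the $17$ boundary types and in each case exhibits an explicit vertex-split construction starting at $K_3$. For the inductive step, assume $|V(G)| > 9$; the aim is to produce either a degree-$3$ vertex or an \emph{admissible} edge contraction, that is, an edge $uv$ whose contraction yields a $(3,6)$-tight torus-with-hole graph of the same topological form on $|V(G)|-1$ vertices. Applying the inductive hypothesis and then reversing the contraction as a vertex split completes the induction.

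The central obstruction, and the step I expect to be most delicate, is the existence of configurations in which every candidate contraction is blocked by a \emph{critical separating cycle}: a short edge-cycle along which two subgraphs each saturate the sparsity bound $3|V|-6$. On the torus such cycles need not separate globally, and the hole boundary itself may be improper (subpaths wrapping around a non-trivial homology class), so the enumeration forks according to each of the $17$ boundary types, producing a matching list of $17$ critical cycle types. My plan is, in each such blocked case, to divide $G$ along the critical cycle using the face graph formalism of Section \ref{s:surfacegraphs}, obtain two strictly smaller $(3,6)$-tight torus-with-hole graphs of admissible boundary type, invoke the inductive hypothesis on each piece, and then splice the resulting vertex-split sequences into a sequence for $G$. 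The combinatorial bookkeeping of these $17$ cases, together with the verification that the pieces really lie in the admissible class and that their vertex-split sequences can be coherently assembled, is the main technical burden of the proof.
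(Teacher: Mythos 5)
Your implications $(iii)\Rightarrow(i)$ and $(i)\Rightarrow(ii)$ are fine and match the paper. The problem is in $(ii)\Rightarrow(iii)$, at the step where a blocked contraction forces a division along a critical separating cycle and you propose to ``splice the resulting vertex-split sequences into a sequence for $G$.'' The division move $G\to\{G_1,G_2\}$ produces $G_2=H_i\cup G_2^\circ$ by substituting a small reference graph $H_i$ for $G_1$; its inverse (the fusion move) is a subgraph \emph{substitution}, not a vertex split. Given vertex-split constructions of $G_1$ and $G_2$ from $K_3$, there is no evident way to concatenate or interleave them into a construction of $G$: the sequence for $G_2$ need not pass through $H_i$ as an intermediate stage, and even if it did, subsequent splits on boundary vertices of $H_i$ interact with the edges of $G_2^\circ$. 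The paper is careful about exactly this point. It uses fission only for the equivalence of (i) and (ii), where the inverse move is justified by a direct infinitesimal-rigidity gluing argument rather than by vertex splitting; for (iii) it gives a separate argument (Lemma \ref{l:contractionkey}) showing that fission is never needed: a nested sequence of critical cycles, combined with finiteness of $G$, always yields a contractible $FF$ edge whose contraction stays in $\T$, so every graph in $\T$ contracts within $\T$ all the way to an uncontractible graph ($H_{16}$ or $H_{17}$ by Theorem \ref{t:theirreducibles}), and these are reachable from $K_3$. Without something like Lemma \ref{l:contractionkey}, your induction does not close.

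A secondary underestimate: your base case is not ``inspect the $17$ representative graphs.'' The class $\T$ contains many graphs on at most $9$ vertices besides $H_1,\dots,H_{17}$ (same detachment type, different triangulations), and showing that all of them are constructible requires the substance of Sections 5 and 6 — that uncontractible graphs have no interior vertices (Lemmas \ref{l:degree6}, \ref{l:atmost9}), the panel/homology-class analysis for the $9$-vertex case (Lemma \ref{l:9vhas_stronglyinterior}), and the case-by-case embedded-graph arguments for the remaining boundary types. Also note that a critical separating cycle need not yield strictly smaller pieces (cf.\ Figure \ref{f:nonplanarannulus}); the paper needs an extra step to locate a \emph{reducing} critical cycle before any induction on size can proceed.
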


While $(3,6)$-tightness is a well-known necessary condition (see Section 3 for the definition) its sufficiency here is a more subtle issue than in the case of the generic Cauchy theorem. For example, the substitution of a triangulated subdisc by a triangulated disc with the same boundary need not preserve $(3,6)$-tightness. Also we note that there are torus graphs with two holes which are $(3,6)$-tight and generically  flexible.

The rest of the paper is concerned with two contrasting proofs of the main theorem and the introduction of methods which are likely to be useful for more general graphs associated with triangulations of compact surfaces with boundaries.

In Section 2  a \emph{torus graph with a single hole} is formally defined. 
In Section 3 we consider  the subfamily $\T$ of $(3,6)$-tight graphs $G$ of this type and we determine the $17$ forms of hole boundary together with  representative small graphs for them.
In Section 4 we define \emph{critical separating cycles} and associated \emph{fission moves} within the class $\T$. Exploiting the toroidal facial structure of the graphs in $\T$ we obtain
a key lemma, Lemma \ref{l:toruskey}, which shows that if the contraction of an edge $e$ preserves the simplicity of the graph but violates the $(3,6)$-tight sparsity count then there exists a critical separating cycle through $e$. In this case an associated fission move $G \to \{G_1, G_2\}$ is possible, which leads to a pair of strictly smaller graphs in $\T$ if $|V(G)|\geq 10$. It follows that there is a contraction-fission reduction scheme to
a certain family of small graphs in $\T$ with no more than $9$ vertices.

In Section 5 we prove that these small graphs are $3$-rigid. The 
inverse move for edge contraction is a vertex splitting move, which is known to preserve  $3$-rigidity (Whiteley \cite{whi-vertex}). Also the inverse fission moves, or fusion moves, correspond to rigid subgraph substitutions preserving $3$-rigidity, and so the equivalence of (i) and (ii) follows. 

In Section 6 we  give an alternative proof of this equivalence which is more direct. The proof is based on (i) a nested application of the key lemma, in order to identify a contractible edge whose contraction preserves membership in $\T$, and (ii)  an analysis of the  graphs of $\T$ which are not contractible in this manner. We show, moreover, that there are $2$ such uncontractible graphs (see Theorem \ref{t:theirreducibles}) and that each graph in $\T$ is constructible from (at least) one of these two graphs by a sequence of vertex splitting moves.

\section{Surface graphs in the torus}\label{s:surfacegraphs}
\label{Surfaces}
Let $\M$ be a compact surface, with or without a boundary. We define a 
\emph{surface graph for} $\M$ to be a simple graph obtained from the $1$-skeleton of a finite triangulation of $\M$. More generally we define  a \emph{surface graph} to be a graph $G = G(M)$ determined by the $1$-skeleton of a finite simplicial complex $M$ with the following properties.

\begin{enumerate}[(i)]
\item $M$ consists of a finite set of $2$-simplexes $\sigma_1, \sigma_2, \dots $ together with their $1$-simplexes and $0$-simplexes.
\item Every $1$-simplex lies in at most two $2$-simplexes.
\item $G(M)$ is simple.
\end{enumerate}

In particular, note that a surface graph $G$ is not merely a graph but is endowed with a \emph{facial structure} consisting of the set of $3$-cycles associated with the $2$-simplexes of  $M$. Also $G$ has  well-defined simplicial integral homology groups $H_i(G)= H_i(M,\bZ), i=1,2$.

A \emph{torus graph} is a surface graph $G$ for the torus $S^1 \times S^1$. This is a simple graph which may be obtained from a triangulated annulus graph by the identification of the inner and outer boundary cycles, where these cycles are assumed to have the same length and orientation. 
The triangulated annulus with its boundary identification gives  an \emph{annular face graph} representation for $G$.

We now define a torus graph with a single hole.

{\begin{definition}\label{d:torusgraphwithhole}
Let $M$ be the simplicial complex of a triangulation of a torus whose $1$-skeleton is a simple graph $T$. Let $D$ be  the simplicial complex of a triangulated disc and let $\iota$ be an injective map
from the set of $2$-simplexes of $D$ to the set of $2$-simplexes of $M$ which respects the adjacency relation between $2$-simplexes of $D$. Finally, let $G$ be the subgraph of $T$ obtained by deleting the edges associated with the $1$-simplexes which are images, under the map induced by $\iota$, of the interior $1$-simplexes of $D$. Then $G$ is said to be a \emph{torus graph with a single hole}.
\end{definition}}

We also refer to $G$ as a \emph{torus with hole graph} when there is no ambiguity.

It follows that a torus with hole graph is a simple graph which is  determined by a 
triple $(M, D, \iota)$ where 
$\iota : D \to M$ is a simplicial map from a simplicial complex $D$ of a triangulated disc to a simplicial complex $M$, of the torus, which is injective on $2$-simplexes. 
In particular the graph $G$ is endowed with the facial structure inherited from $M$.

It is convenient to abuse notation and let $D$, $T$ and $\partial D$ denote the graphs of the simplicial complexes $D$, $M$ and $\partial D$. Also we write $i$ for the associated simple graph homomorphism $i: D \to T$. 

The \emph{boundary graph} $\partial G$ of a torus with hole graph $G$ is the graph whose edges do not lie in two  facial $3$-cycles. Thus $\partial G$ is the image under $i$ of the boundary graph of $D$.

A torus with hole graph $G$ is also endowed with a specific $r$-cycle of edges, namely the image under $i$ of the boundary cycle of the graph $D$. This possibly improper $r$-cycle is determined by the restriction map 
\[
\alpha = i|_{\partial D} : \partial D \to \partial G,
\]
which we refer to as the \emph{detachment map} of $G$. 
This  map has the form
$\alpha : C_r \to \partial G$ where $C_r$ is the $r$-cycle graph, and is uniquely associated with $G$.

We now note some examples of torus with hole graphs.

Let $\M$ be the compact surface with boundary derived from $S^1\times S^1$ by the removal of the interior of a closed topological disc $\D$, where the topological boundary $\partial \D$ is a \emph{simple} closed curve. Then a surface graph for $\M$ is a torus with hole graph. These graphs correspond to the injectivity of the detachment map. Figure \ref{f:9v} illustrates such a graph.

On the other hand
Figure \ref{f:discontorus} indicates a torus graph with a single hole for which $\alpha$ is not injective and for which
\[
|V(\partial G)| = |V(\partial D)|-1, \quad  |E(\partial G)| = |E(\partial D)|
\]
\begin{center}
\begin{figure}[ht]
\centering
\includegraphics[width=5cm]{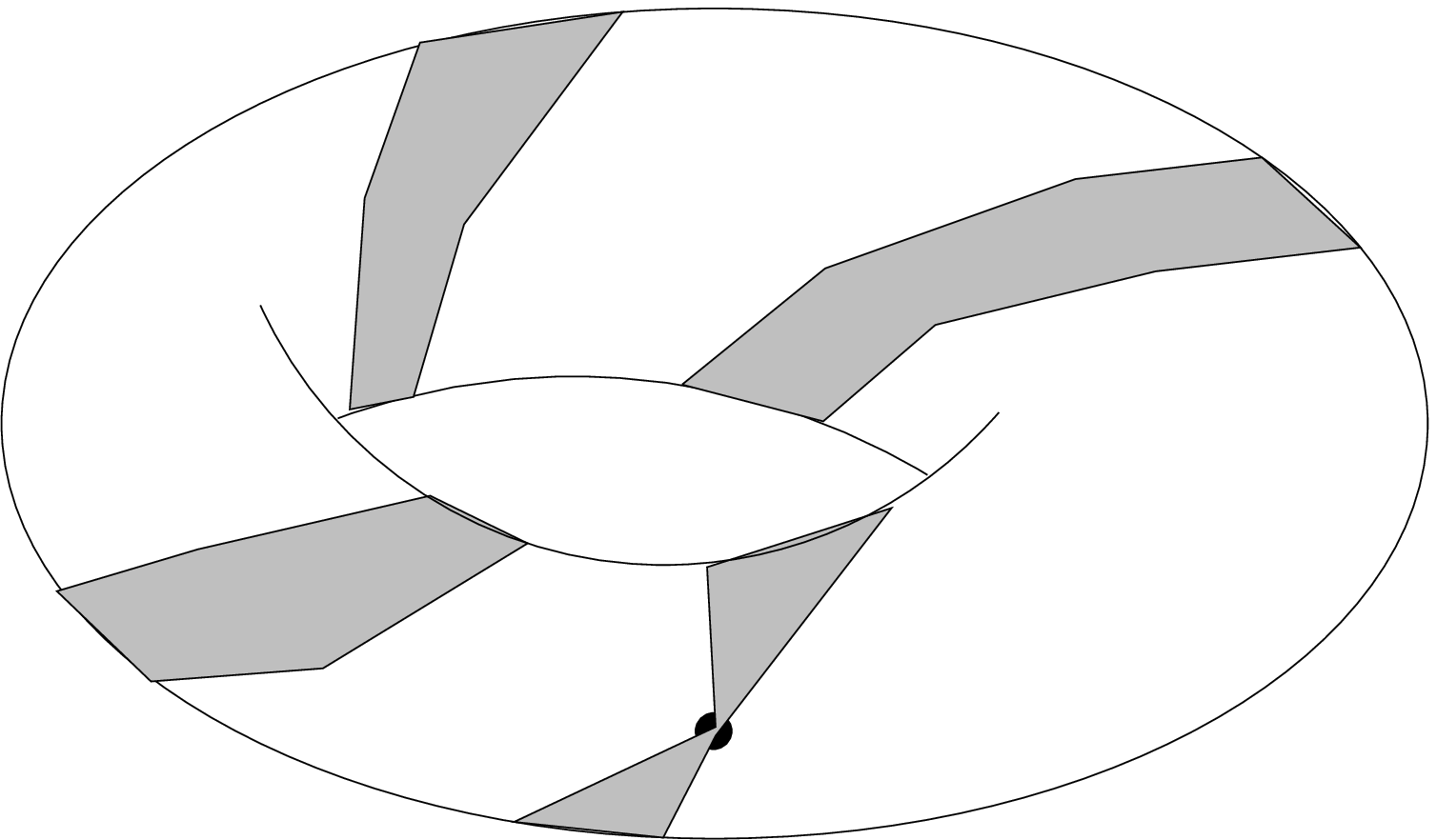}
\caption{A torus graph with a single hole (shaded). 
}
\label{f:discontorus}
\end{figure}
\end{center}  

Recall that a simple graph $G$ is $3$-connected if there exists no pair of vertices $x, y$ which separates the graph in the sense that there there are vertices $v, w$ such that each path from $v$ to $w$ contains one of the vertices in the pair. 
We note that a torus with hole graph may fail to be $3$-connected as indicated in Figure \ref{f:discontorusC}.
However, we shall see that the combinatorial condition of $(3,6)$-tightness, defined in the next section, limits the possible forms of noninjectivity of the detachment map. In particular these graphs are necessarily $3$-connected.

\begin{center}
\begin{figure}[ht]
\centering
\includegraphics[width=5cm]{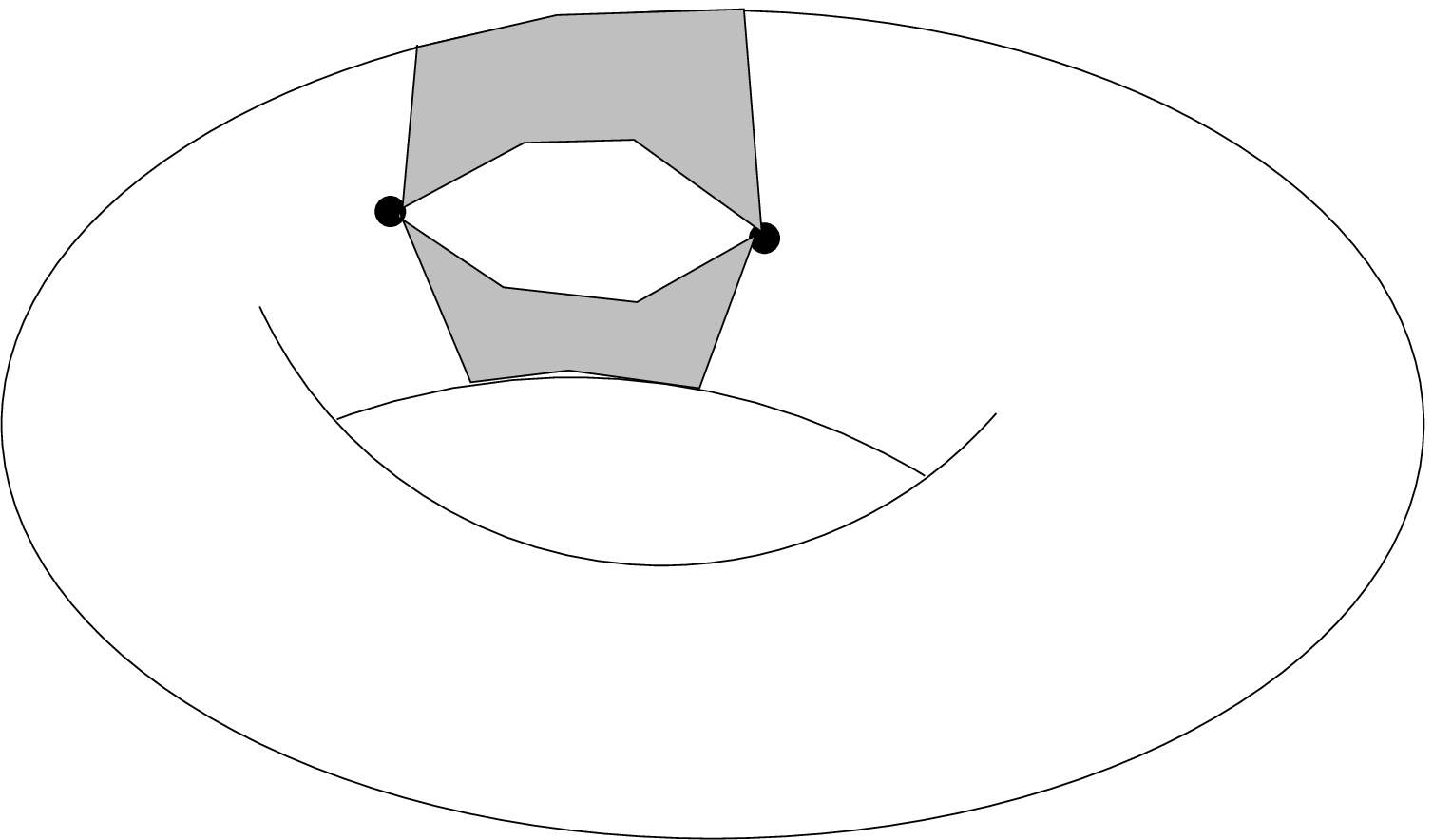}
\caption{A torus graph with a single hole (shaded).}
\label{f:discontorusC}
\end{figure}
\end{center}

On the other hand Figure \ref{f:3e4perspective} gives a perspective view of two torus with hole graphs with noninjective detachment map which 
can arise as $(3,6)$-tight graphs.
In the first figure the detached disk interior (the hole) wraps around the torus, and a single pair of vertices of $\partial D$ are identified. The graph of the second figure has an exposed edge corresponding to the identification of two edges of $\partial D$ under $\alpha$.

\begin{center}
\begin{figure}[ht]
\centering
\includegraphics[width=3.5cm]{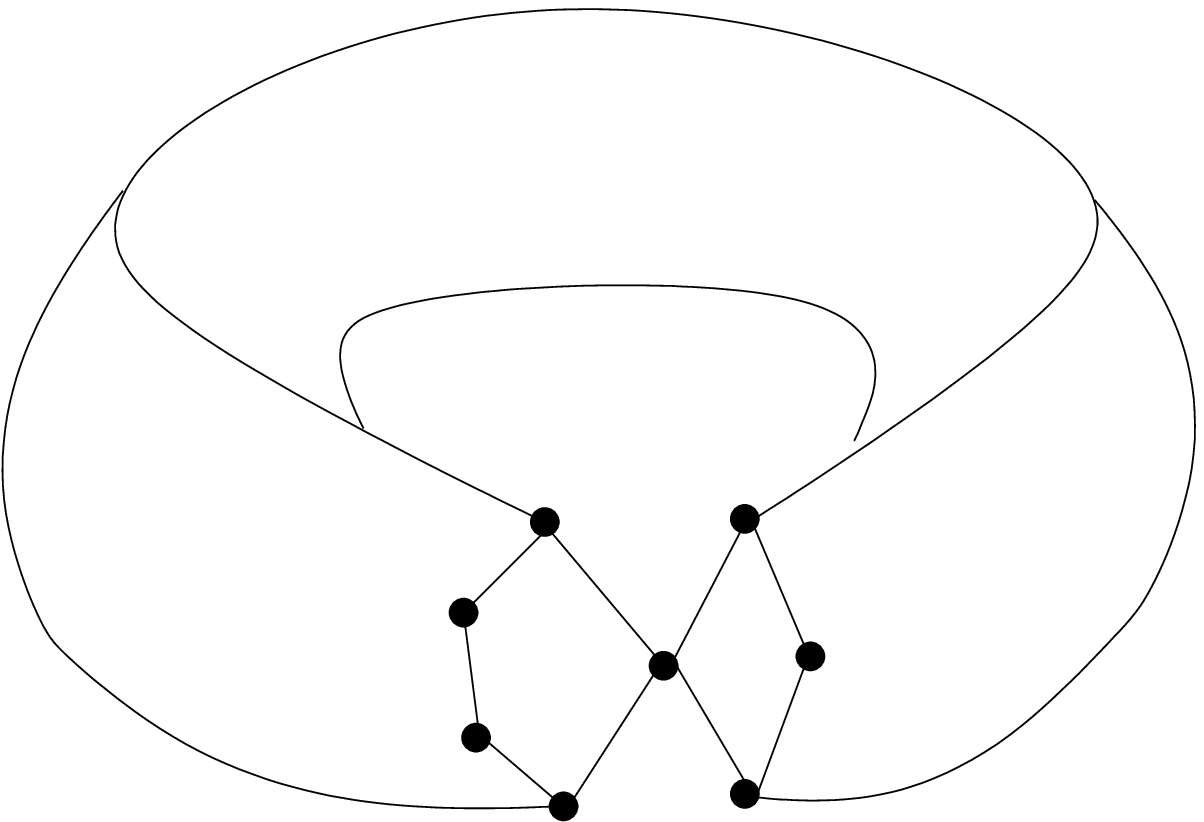}
\quad \includegraphics[width=3.5cm]{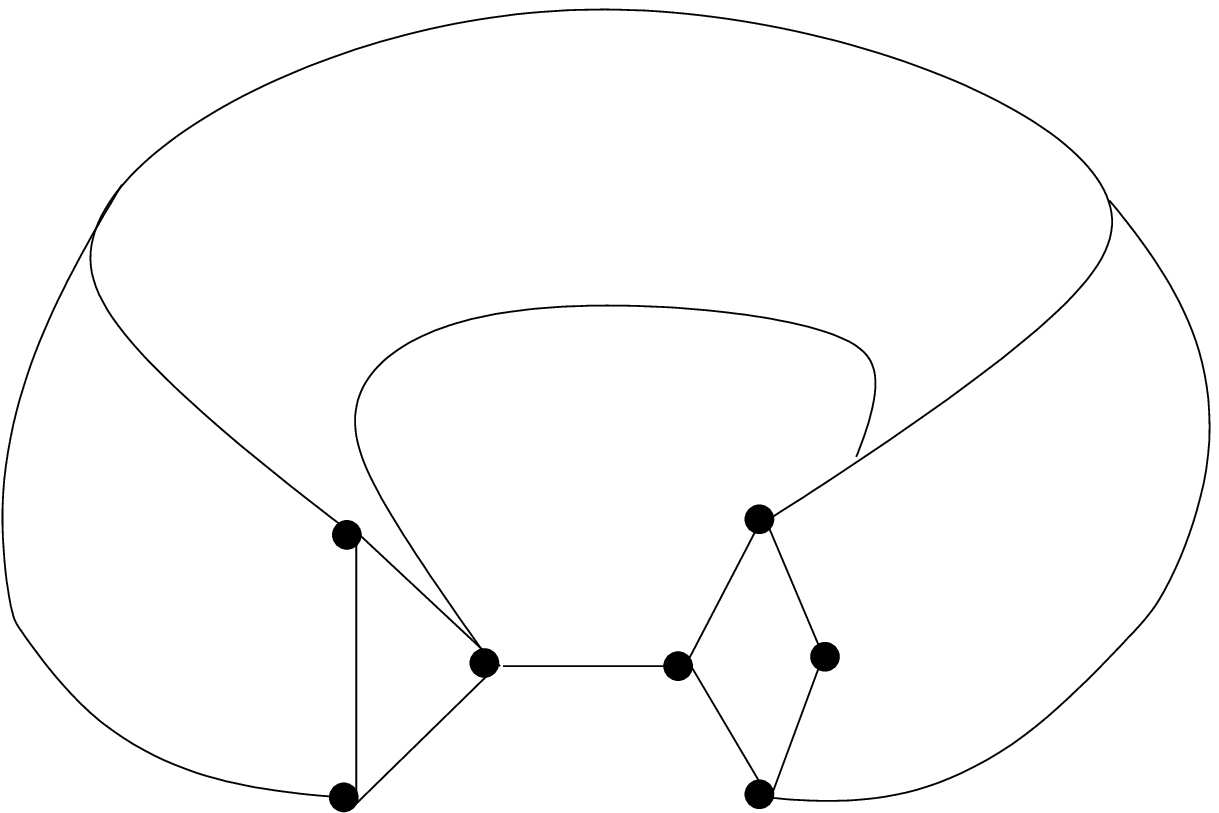}
\caption{Torus with hole graphs.}
\label{f:3e4perspective}
\end{figure}
\end{center} 
\bigskip

Finally, we remark that a torus graph $G$ with a single hole 
may take an extreme form with $\partial G =  G$. In particular $G$ may consist of two cycles of edges joined at a single vertex. Evidently such graphs are not $(3,6)$-tight.

\subsection{Planar representations}Torus graphs with a single hole may be given as the identification graphs of various planar partially triangulated graphs, with certain identifications of vertices and edges. We note two special forms of this.

Let $R$ be an annular face graph  for a torus graph $G$ and consider the graph obtained by deleting the edges and vertices of $R$ that are  interior to a \emph{proper} cycle $\pi$ of edges of $R$. Here $\pi$ is required to have distinct vertices and edges. In particular the boundary edges are not removed and the resulting graph $R'$, with the boundary matching for $R$, can be viewed as an  \emph{annular face graph}  for the associated identification   graph $G' = R'/\sim$. Such a graph is  evidently a torus graph with a single hole. Note that there may be edges on the hole boundary which are incident to no facial $3$-cycle (as in one of the graphs of Figure \ref{f:3e4perspective}).

We find it more convenient to illustrate a number of graphs by means of \emph{rectangular face graph} representations.
A rectangular face graph for a torus graph is a planar triangulated disc $R$ whose outer boundary path $\partial R$, as a directed cycle of edges, is a concatenation
$\partial R = \pi_1\pi_2\pi_3\pi_4$
where $\pi_1$ and $\pi_3$ are paths of length $r$,
$\pi_2$ and $\pi_4$ are paths of length $s$, and where these paths are appropriately matched. Formally this matching corresponds to appropriate bijections
$V(\pi_1) \to V(\pi_3)$ and $V(\pi_2) \to V(\pi_4)$ which are order reversing. The associated identification graph $R/\sim$ is a torus graph. Similarly, if $D$ is a triangulated disc in $R$, with boundary forming a proper cycle of (nonrepeating) vertices and $R'$ is obtained from $R$ by the removal of edges interior to $D$, then
the identification graph $G= R'/\sim$ is a torus graph with  a single hole. 

Additionally, it is useful to consider torus with hole graphs as embedded graphs on the topological torus, and we do this in the proofs of Lemmas \ref{l:fissionmove}, \ref{l:9vhas_stronglyinterior} and \ref{l:atmost9},  for example. Note in particular that a torus graph with a single hole has an embedded graph representation in a topological rectangular representation $R/\sim$ of the torus where, roughly speaking, the hole appears in the interior of $R$. 
More precisely, any torus graph with a single hole, with triple $(T,D,i)$, admits an embedded graph representation on a topological torus $\T$. This torus can in turn  be represented as the identification space $R/\sim$ of a closed rectangle $R$, where opposite edges are identified, and where the boundary of the rectangle arises from two simple closed paths in $\T$ which meet at a single point.  These paths may be chosen in the complement of the interior $U$ of the topological disc $\D$ of $D$, and it follows that $U$ corresponds (bijectively) to a subset of the interior of $R$.

\section{$(3,6)$-tight torus with hole graphs.}
We now consider torus graphs $G$ with a single hole which are $(3,6)$-tight and we determine the $17$ forms of the detachment map
$\alpha : C_r \to \partial G$ where $r=9$.
We remark that in the process of reduction  by edge contraction  or by critical separating cycle division, the hole boundary of the resulting smaller graphs may differ in form from the boundary  of $G$. For this reason, even in the special case of graphs whose boundary is a proper $9$-cycle, it is necessary to consider graphs with arbitrary detachment maps.

Recall that the {\em freedom number} for a finite simple graph $G=(V,E)$ is 
$f(G)=3|V|-|E|$ and that
if $f(G)=6$ then $G$ is said to satisfy the {\em Maxwell count}.

\begin{lemma}\label{l:boudaryis9cycle}
Let $G$ be a torus graph with a single hole determined by the triple $(T,D,i)$. Then $G$ satisfies the Maxwell count if and only if $\partial D$ is a $9$-cycle.
\end{lemma}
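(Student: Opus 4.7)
My plan is to compute the freedom number $f(G) = 3|V(G)| - |E(G)|$ directly, via separate Euler-characteristic computations on $T$ and on $D$. Since $G$ differs from $T$ only by the deletion of edges, $|V(G)| = |V(T)|$, so the task reduces to counting the deleted edges. First I would apply Euler's formula for the torus, $|V(T)| - |E(T)| + |F(T)| = 0$, combined with the triangulation identity $3|F(T)| = 2|E(T)|$ (every face is a triangle, every $1$-simplex lies in two $2$-simplexes), to obtain $|E(T)| = 3|V(T)|$. Hence $f(T) = 0$, and $f(G)$ equals exactly the number of edges deleted from $T$.

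Next I would identify this deletion count with the number of interior $1$-simplexes of $D$. This needs a short injectivity argument: since $\iota$ is injective on $2$-simplexes and restricts to a simplicial isomorphism on each single $2$-simplex, and since every $1$-simplex of $T$ lies in at most two $2$-simplexes, the induced map on interior $1$-simplexes of $D$ is injective and its image does not collide with the images of the boundary $1$-simplexes (an interior $1$-simplex is recovered from its image as the common $1$-simplex of the pair of adjacent $2$-simplex images, while a coincidence with a boundary image would force two distinct edges of a single $2$-simplex to have the same image, contradicting the simplicial isomorphism on that $2$-simplex). I would then apply Euler for the disc: $|V(D)| - |E(D)| + |F(D)| = 1$ together with $3|F(D)| = 2|E(D)| - r$, where $r = |\partial D|$, yields $|E(D)| = 3|V(D)| - r - 3$, so the interior $1$-simplex count is $3|V(D)| - 2r - 3$. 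In the setting relevant to the lemma, where $D$ is a triangulated polygon with only boundary vertices ($|V(D)| = r$), this reduces to $r - 3$.

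Combining these computations gives $f(G) = r - 3$, so $f(G) = 6$ if and only if $r = 9$, i.e., $\partial D$ is a $9$-cycle. The main technical obstacle I expect is the injectivity bookkeeping in the middle step—verifying that interior $1$-simplexes of $D$ neither collide among themselves nor with boundary $1$-simplexes under $\iota$; the underlying Euler arithmetic is otherwise entirely routine.
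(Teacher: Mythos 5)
Your overall strategy is the same as the paper's: Euler--characteristic bookkeeping gives $f(T)=0$, and the Maxwell count for $G$ reduces to counting the excised edges. Your derivation of $f(T)=0$ is fine (the paper gets it instead by identifying the two boundary cycles of an annulus), and your explicit injectivity argument for the interior $1$-simplexes --- recovering an interior $1$-simplex from its pair of adjacent $2$-simplex images, and ruling out collisions with boundary images via the simplicial isomorphism on each $2$-simplex --- is a point the paper compresses into the single identity $f(i(D))-f(i(\partial D))=f(D)-f(\partial D)$; it is worth having it spelled out.

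The gap is in your final step, where you assume that ``$D$ is a triangulated polygon with only boundary vertices ($|V(D)|=r$)''. Neither Definition \ref{d:torusgraphwithhole} nor the statement of the lemma imposes this: $D$ is an arbitrary triangulated disc and may have interior vertices. By your own Euler count the number of interior $1$-simplexes is $3|V(D)|-2r-3=r+3m-3$, where $m$ is the number of interior vertices, so your argument as written yields $f(G)=r+3m-3$ and the claimed equivalence fails (for instance $m=1$, $r=6$ would give $f(G)=6$ with $\partial D$ a $6$-cycle). To close this you must treat the interior vertices: the images of the faces surrounding an interior vertex of $D$ form a closed cyclic fan in $T$, hence exhaust the link of the image vertex, so every edge at that vertex is deleted and the vertex must be excised from $G$ rather than left isolated; each such excision contributes $-3$ to $3|V|$, exactly cancelling the three extra deleted edges it forces, and one recovers $f(G)=r-3$ for every disc. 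This is precisely why the paper works with $f(D)-f(\partial D)=3-|\partial D|$, a quantity insensitive to interior vertices (each one adds $1$ to $|V(D)|$ and $3$ to $|E(D)|$, leaving $f(D)=|\partial D|+3$ unchanged), together with the decomposition $f(T)=f(G)+f(i(D))-f(i(\partial D))$, which presupposes $G\cap i(D)=i(\partial D)$. Either justify the restriction $|V(D)|=r$ from the definition (you cannot, as stated) or add the cancellation argument above.
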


\proof 
The graph of a triangulated sphere satisfies the Maxwell count 
and so an annulus graph $A$ with two $r$-cycle boundary cycles has freedom number $f(A)=6+2(r-3)$. 
On the identification of the boundary cycles a total of $r$ vertices and $r$ edges are removed and the freedom number decreases by $2r$. Thus if $T$ is a torus graph then $f(T)=0$.
Note that 
\[
f(i(D))-f(i(\partial D))= f(D) - f(\partial D) = (|\partial D|+3)-2(|\partial D|=3-|\partial D|
\]
Now 
$
f(T)=f(G)+f(i(D))-f(i(\partial D))
$
and so $0=f(G)+3-|\partial D|$. Thus $f(G)=6$ if and only if $|\partial D|=9$.

\endproof

A simple graph $G$ is \emph{$(3,6)$-tight} if
$f(G)=6$ and $f(K)\geq 6$ for every subgraph $K$ with at least $3$ vertices. 
We write $\T$ for the class of torus graphs with a single hole which are $(3,6)$-tight.

Figure \ref{f:smallgraph} indicates a rectangular face graph representation for a graph $H_1$ in $\T$ for which the boundary graph is a proper $9$-cycle. 

\begin{center}
\begin{figure}[ht]
\centering
\includegraphics[width=3.5cm]{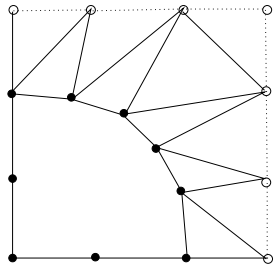}
\caption{A rectangular face graph for the graph $H_1$.}
\label{f:smallgraph}
\end{figure}
\end{center}

It is elementary to verify that $H_1$ is $(3,6)$-tight by means of the following principle. If a graph $H_a$ arises from a $(3,6)$-tight graph $H_b$ by vertex splitting, in the sense of the following definition, then $H_a$ is also $(3,6)$-tight. In this way it also readily follows that the graphs $H_2, \dots , H_{17}$ given below are graphs in $\T$.

Let $G=(V,E)$ be a simple graph with vertices $v_1,v_2,\dots ,v_r$  and let $v_1v_2, v_1v_3, \dots ,v_1v_n$ be the edges of   $E$ that are incident to $v_1$. Let $G'=(V',E')$ arise from $G$ by the introduction of a new vertex $v_0$, new edges $v_0v_1, v_0v_2, v_0v_3$, and the replacement of any number of the remaining edges $v_1v_t$, for $t > 3$,  by the  edges $v_0v_t$. Then the move $G \to G'$ is said to be a
\emph{vertex-splitting} move on $v_1$. The inverse of such a move is an edge contraction move which preserves simplicity.

The proof of rigidity preservation under vertex splitting is due to Whiteley \cite{whi-vertex}. A different proof, together with a proof of Gluck's theorem, is given in Cruickshank, Kitson and Power \cite{cru-kit-pow}.

\subsection{Graphs in $\T$ with noninjective detachment map}
When $\alpha$ is not injective the boundary of the hole can be regarded as a $9$-cycle which has been pinched together in some manner, with several self-contact points. The simplest form of this occurs when $|V(\partial G)| = 8$ and we  note that $\partial G$ then takes one of two forms, which we denote as $v3v6$ and $v4v5$. 
Figure  \ref{f:2v6and4v5} indicates two graphs $H_2$ and $H_3$ of these two types. 

In general a detachment map $\alpha : C_9 \to \partial G$ determines a vertex word of length $9$ whose letters are the vertices of $\partial G$, with repetitions, ordered in correspondence with the $9$-cycle of $C_9$.
This word is uniquely determined up to cyclic permutation and order reversal. However, we shall employ the economy of writing the short form $v3v6$ for the full form $v_1v_2v_3v_1v_4v_5v_6v_7v_8$ where $v=v_1$ is a repeated vertex. The short form should be read as "$v$ followed by $3$ distinct edges to $v$, followed by $6$ further distinct edges" (terminating in the first vertex $v$). An example of a cyclic word for a detachment map with $2$ repeated vertices (and no repeated edges) is $v1w2v2w4$. The numbers represent the $9$ distinct edges occurring in the $9$-cycle $i(\partial D)$ in this example. We  use the letters $v,w$ and also $x$ to denote repeated vertices.  When edges are repeated we adopt a more economical notation. For example $e3e4$ indicates that edge $e$ is repeated and that $e$ is followed by $3$ distinct edges then followed by $e$ (even though traversed in a different order) which is then followed by $4$ distinct edges to complete the cycle. We use the letters $e, f,$ and $g$ to denote repeated edges. In every cyclic word for the detachment map type the number of edge letters ($e, f$ or $g$), counted with multiplicities, together with the number of numerals, is equal to $9$.

\begin{center}
\begin{figure}[ht]
\centering
\includegraphics[width=5cm]{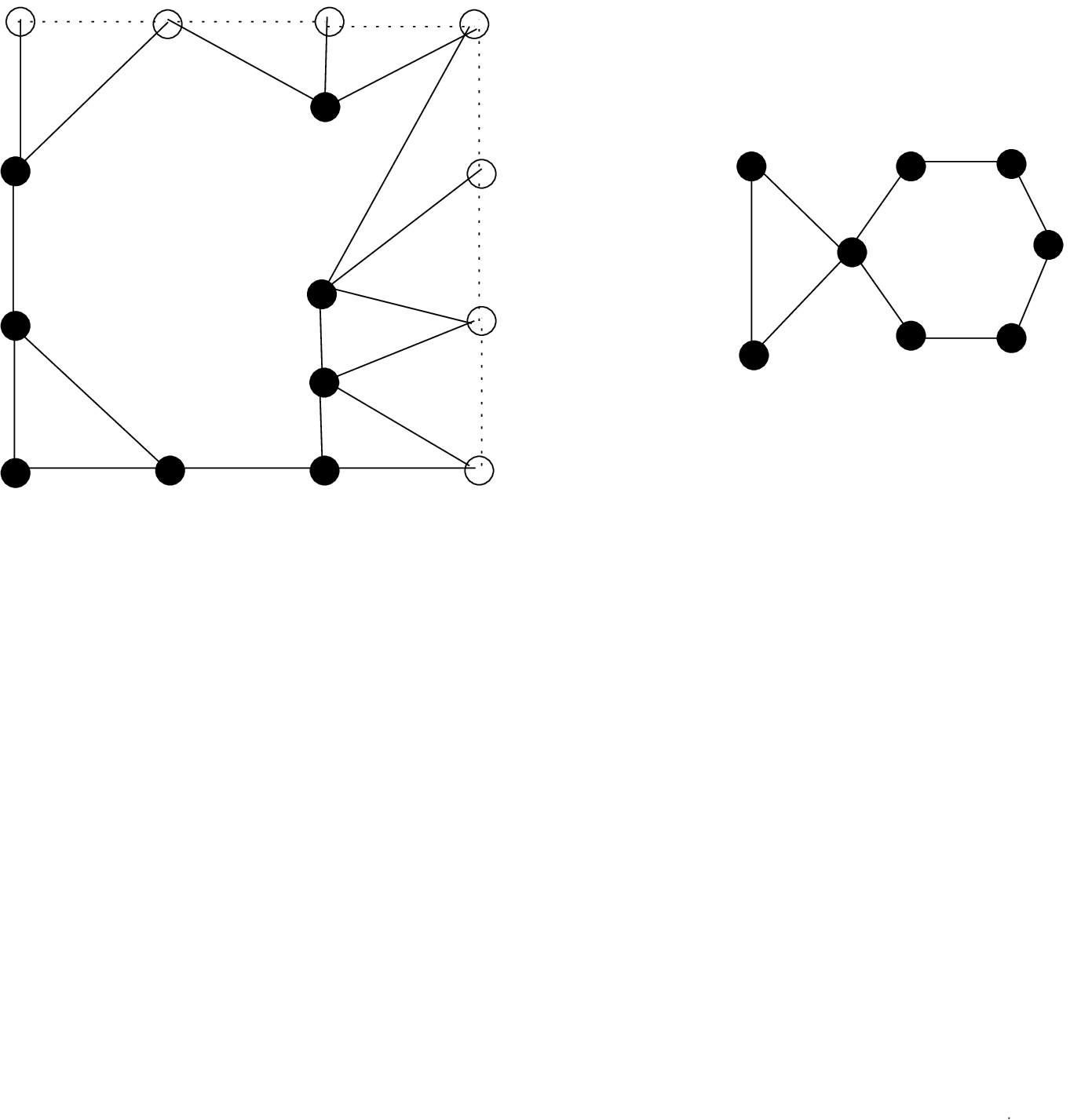}
\centering
\includegraphics[width=5cm]{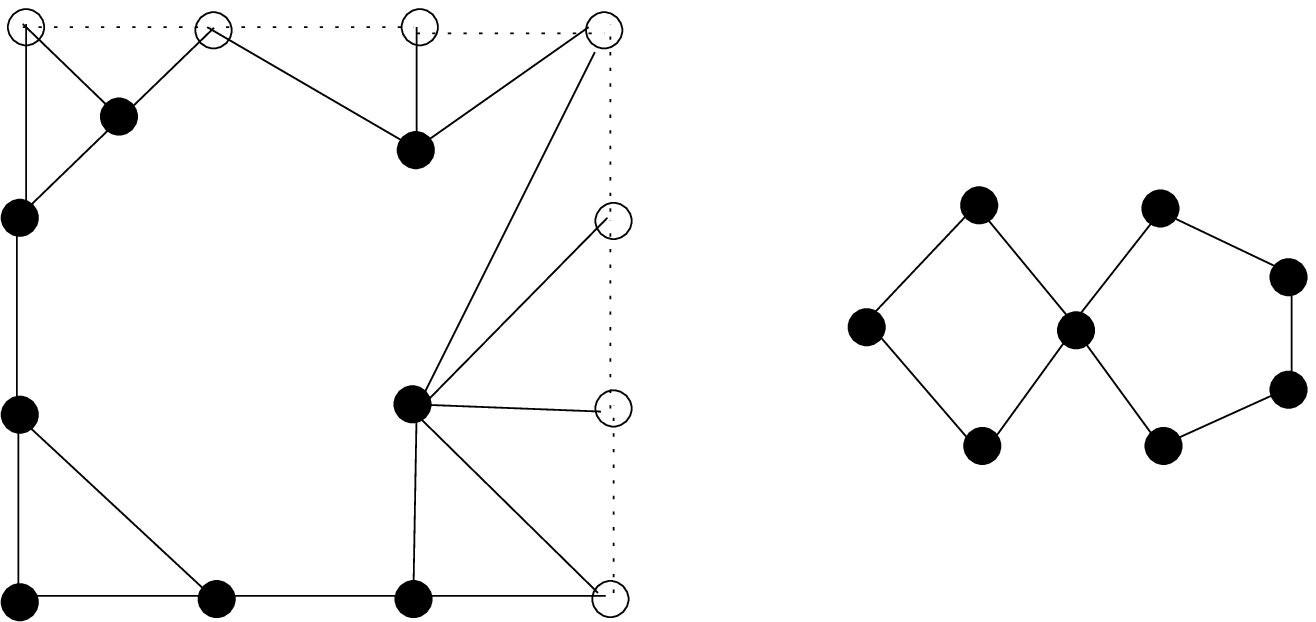}
\caption{Rectangular face graph representatives for the graphs $H_2$ and $H_3$ in $\T$  with boundary graphs of type $v3v6$ and $v4v5$.}
\label{f:2v6and4v5}
\end{figure}
\end{center}

Even the small graphs in $\T$, with $9$ or fewer vertices, form a surprisingly varied class as will become evident in the proof of the next lemma. The identification in this lemma of the precise nature of the detachment maps will also be useful in Section 6 for determining the uncontractible graphs in $\T$.


\begin{lemma}\label{l:smallgraphs}
There is a collection of graphs $ H_1, \dots ,  H_{17}$ in $\T$ with 
distinct detachment maps $\alpha_i, 1 \leq i \leq 17,$ and the following properties. 

(i) If $G \in \T$ with detachment map $\alpha$ then  $\alpha =\alpha_i$ for some $i$.

(ii) $V(H_i) = V(\partial H_i)$, for each $i$.
\end{lemma}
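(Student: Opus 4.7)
The plan is to proceed by classifying the possible cyclic words that encode the detachment map $\alpha : C_9 \to \partial G$ and then constructing an explicit small representative for each surviving type. Throughout, I exploit the notational conventions introduced just before the statement: a word records, in cyclic order, the images of the $9$ consecutive edges of $C_9$, with letters $v,w,x$ marking repeated vertices and letters $e,f,g$ marking repeated edges; each edge-letter occurs exactly twice; the total contribution of edge-letters (with multiplicity) plus numerals equals $9$; and words are considered up to cyclic rotation and reversal.

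First I would enumerate the purely combinatorial possibilities. Stratify by the pair $(p,q)$ where $p$ is the number of distinct repeated vertices and $q$ is the number of distinct repeated edges, and within each stratum record the gap-composition of $9$ that assigns the numbers of distinct intermediate edges between successive occurrences. The injective case $(p,q)=(0,0)$ gives one word (the proper $9$-cycle). The single-repeated-vertex case $(1,0)$ gives compositions of $9$ into two positive parts, and the two-repeated-vertex case $(2,0)$ gives words of the form $v\,a\,w\,b\,v\,c\,w\,d$ up to symmetry; and so on for $(3,0)$ and for the edge-repetition cases $(p,q)$ with $q\geq 1$, including mixed cases such as $e1e4$ (one repeated edge) or $ee$-type concatenations.

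Next I would eliminate the combinatorial candidates that cannot support a $(3,6)$-tight extension. The $(3,6)$-tight hypothesis, combined with Lemma \ref{l:boudaryis9cycle} (so $|\partial D|=9$), obstructs configurations in which two identifications are too close: if two occurrences of the same vertex (or edge) are separated by only one or two intermediate edges, then the corresponding subgraph on $3$ or $4$ vertices contains too many edges and fails the count $f(K)\geq 6$. This local obstruction prunes the enumeration quickly. I would also use the condition (ii) to be proved, $V(H_i)=V(\partial H_i)$, together with $f(G)=6$, to limit the maximal total amount of identification that can occur before the chord count forces non-simplicity or a violation of sparsity on $\partial G$ itself.

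For each surviving word, I would then construct the representative $H_i$ directly from a rectangular or annular face graph as in the preceding subsection, triangulating the complement of the hole while reusing only the $9$-cycle vertices, so that $V(H_i)=V(\partial H_i)$ holds by construction. To certify that each $H_i$ lies in $\T$, I would exhibit a sequence of vertex splits from a known small member of $\T$ (such as $H_1$ or $H_2$), using the principle highlighted just after Figure \ref{f:smallgraph} that vertex splitting preserves $(3,6)$-tightness; membership in $\T$ then follows, and the detachment map is read off the boundary. The main obstacle is ensuring the enumeration is exhaustive: one must verify that the local sparsity obstruction, together with the constraint that $\alpha$ originates from a simplicial map $i: D\to M$ injective on $2$-simplexes, leaves exactly $17$ surviving words and no more. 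I would organise this verification as a table indexed by $(p,q)$ and the gap-compositions, checking each entry against the constraints and producing the representative $H_i$ in the same row.
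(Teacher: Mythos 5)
Your overall scheme---enumerate the cyclic words up to rotation and reversal, prune by $(3,6)$-tightness, and build a vertex-minimal representative for each survivor by vertex splitting from a known member of $\T$---matches the paper's. But there is a genuine gap in your pruning step. Your stated elimination criterion is purely local: two occurrences of the same vertex or edge separated by only one or two intermediate edges force a dense subgraph on $3$ or $4$ vertices. That criterion (essentially simplicity plus local sparsity) correctly yields the constraint that in the alternating word $vawbvcwd$ the sum of any two consecutive gaps is at least $3$, but it does \emph{not} exclude the nonalternating configurations, which are the subtle part of the classification. For example the word $v3v2w3w1$ passes every local test: the two occurrences of $v$ are separated by $3$ and $6$ edges, likewise for $w$, so no small dense subgraph appears on the boundary. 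The paper excludes this type (and the analogous nonalternating types $v1e1v2w1e1w1$ and the three-repeated-vertex words containing disjoint subwords $w1v$ and $v2w$) by a global, topological argument: the subwords $v3v$ and $w3w$ determine two nonfacial $3$-cycles, the portion of the triangulated torus lying between them is a triangulated sphere, and adjoining the edge $vw$ produces a subgraph with freedom number $5$. Without an argument of this kind your enumeration is not exhaustive in the required sense---it would leave spurious word types in the list, so you could not conclude that exactly $17$ types occur.

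Two smaller points. First, your suggestion to use condition (ii) ``to limit the maximal total amount of identification'' is circular as stated, since (ii) is a property of the representatives you are constructing, not a constraint on an arbitrary $G\in\T$; the correct lower bound ($|V(\partial G)|\geq 4$) comes instead from the observation that a $3$-vertex boundary would force the detachment map to cover some edge more than twice, contradicting that each edge lies in at most two faces and that $i$ is injective on $2$-simplexes. Second, your construction and certification of the $H_i$ (rectangular face graphs, vertex splitting from a known $(3,6)$-tight graph) is exactly the paper's method and is fine.
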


\begin{proof}
The proof of the lemma follows the following scheme. We identify the detachment maps $\alpha_1, \dots , \alpha_{17}$ that are possible
for graphs in the class $\T$, arguing case by case  for fixed values of $|V(\partial G)|$. These values range from $9$ to the minimum possible value which turns out to be $4$. At the same time we identify corresponding  vertex minimal graphs $H_1, \dots , H_{17}$ for these types.

Let $G\in \T$ be a $(3,6)$-tight torus with hole graph with the attachment map $\alpha$. The graphs $H_1, H_2, H_3$ and their detachment maps $\alpha_1, \alpha_2, \alpha_3$ have been described above. If $|V(\partial G)|=9$ then $\alpha = \alpha_1$ while if $|V(\partial G)|=8$ then either $\alpha = \alpha_2$ or $\alpha = \alpha_3$.

We next look at the case of $1$ repeated edge  which is the case $|V(\partial G)| = 7,   |E(\partial G)| = 8$. The detachment map, and the boundary graph, evidently has at most one form, with cyclic word $e3e4$. An associated vertex minimal  graph $H_4$ in $\T$ is defined by the rectangular face graph in Figure \ref{f:3e4}.

\begin{center}
\begin{figure}[ht]
\includegraphics[width=6cm]{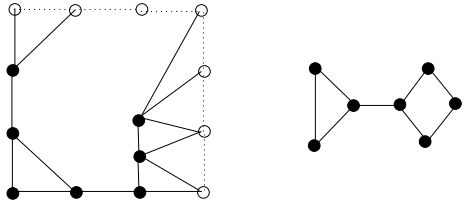}
\caption{A rectangular face graph representation for the graph  $H_4$, with boundary graph of type $e3e4$.}
\label{f:3e4}
\end{figure}
\end{center} 

We next consider the case of graphs in $\T$ with
\[
|V(\partial G)| = 7, \quad  |E(\partial G)| = 9
\]
Assume first that there are distinct vertices $v, w$ in $\partial G$ and $4$ edge-disjoint  consecutive paths of edges  between them, from $v$ to $w$, $w$ to $v$, $v$ to $w$ and $w$ to $v$, respectively, with lengths $a,b,c, d$ say, so that the cyclic word type for the detachment map of $\alpha$ is $vawbvcwd$. Thus we are assuming that $v$ and $w$ alternate in the cyclic word. Figure \ref{f:vvtype} indicates how this may be depicted as an embedded cycle on a topological torus.

\begin{center}
\begin{figure}[ht]
\centering
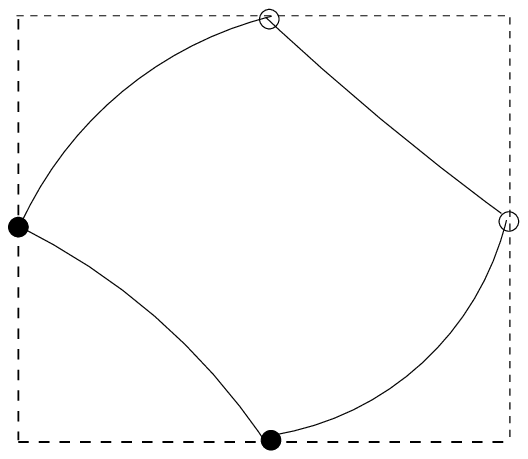
\caption{Boundary cycle of type $vawbvcwd$ with $a+b+c+d=9$.}
\label{f:vvtype}
\end{figure}
\end{center}

Since the graphs in $\T$ are simple it follows that the sum of any two of $a, b, c, d$  is at least $3$. In fact this is the only constraint and up to cyclic order and reversals there are $5$ types for such  quadruples $(a,b,c,d)$, namely
\[
(1,2,2,4), (1,2,3,3), (1,2,4,2),  (1,3,2,3), (2,3,2,2),
\]
and there are $5$ associated detachment maps, of types $v1w2v2w4$, etc.
The graphs in Figures \ref{f:H5H6H7}, \ref{f:H8H9} give representative vertex minimal graphs, $H_5, \dots , H_9$ 
for these attachment types.

\begin{center}
\begin{figure}[ht]
\includegraphics[width=3cm]{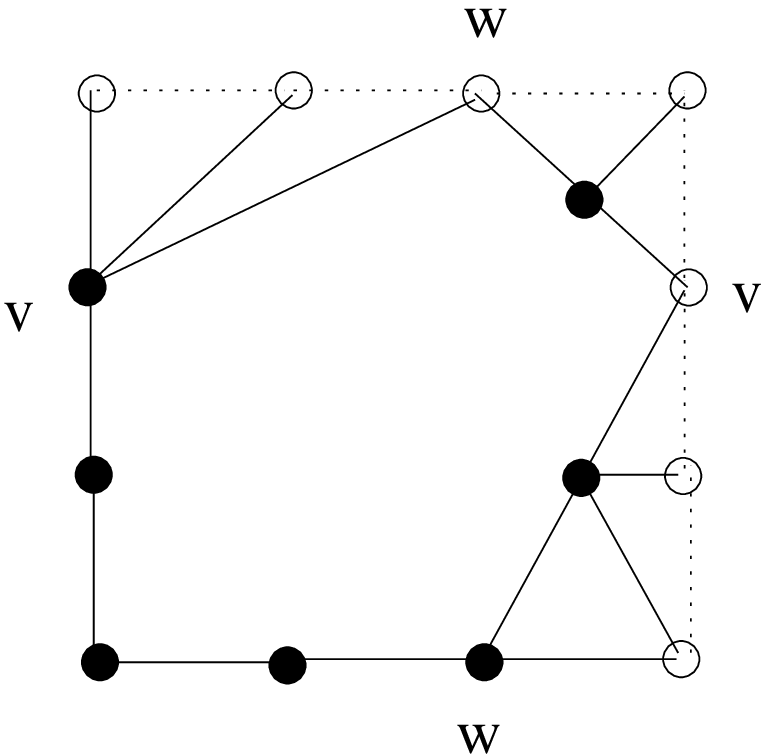}
\quad
\includegraphics[width=3cm]{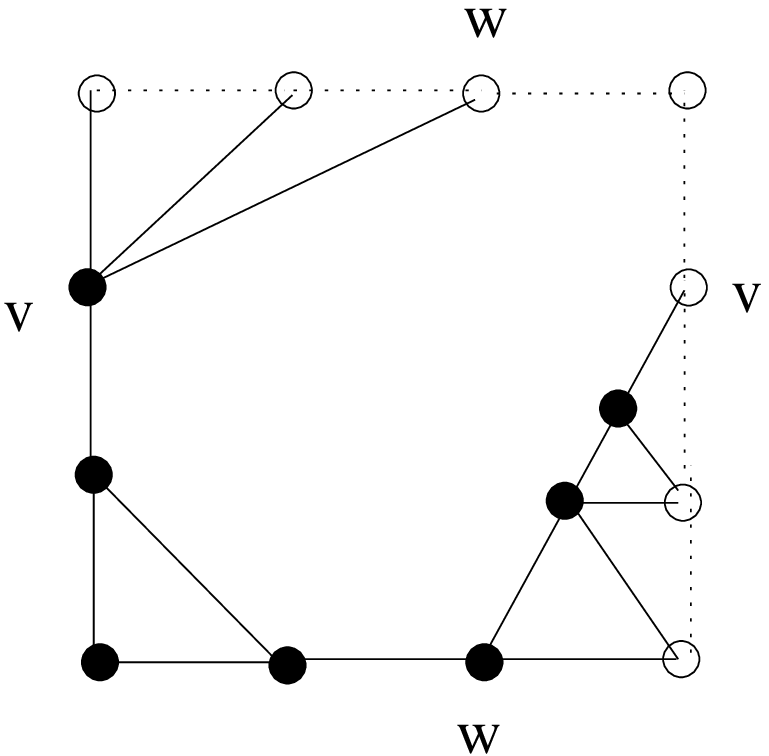}
\quad
\includegraphics[width=3cm]{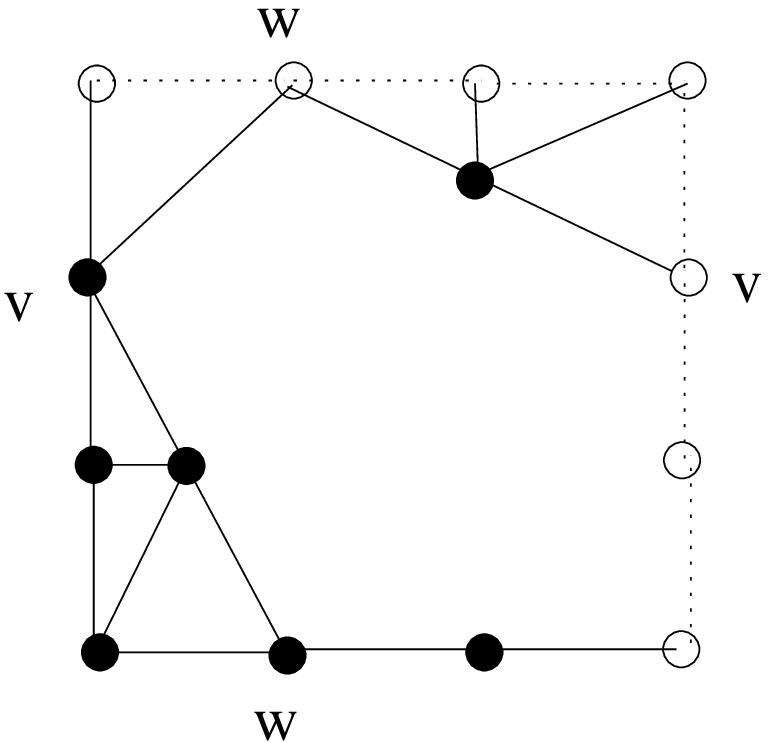}
\caption{Rectangular face graph representations for $H_{5}, H_{6}, H_{7}$.}
\label{f:H5H6H7}
\end{figure}
\end{center}

\begin{center}
\begin{figure}[ht]
\includegraphics[width=3cm]{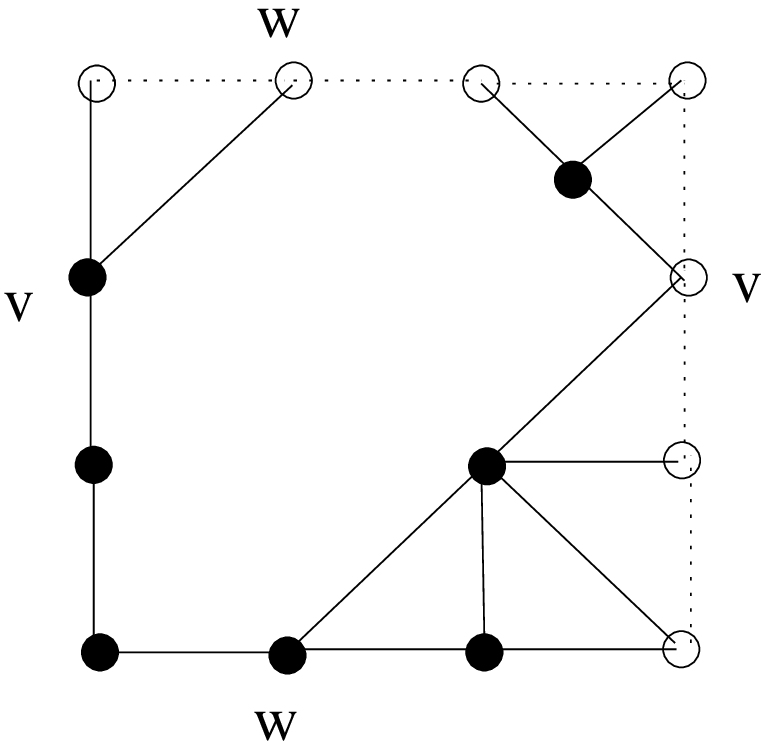}
\quad
\includegraphics[width=3cm]{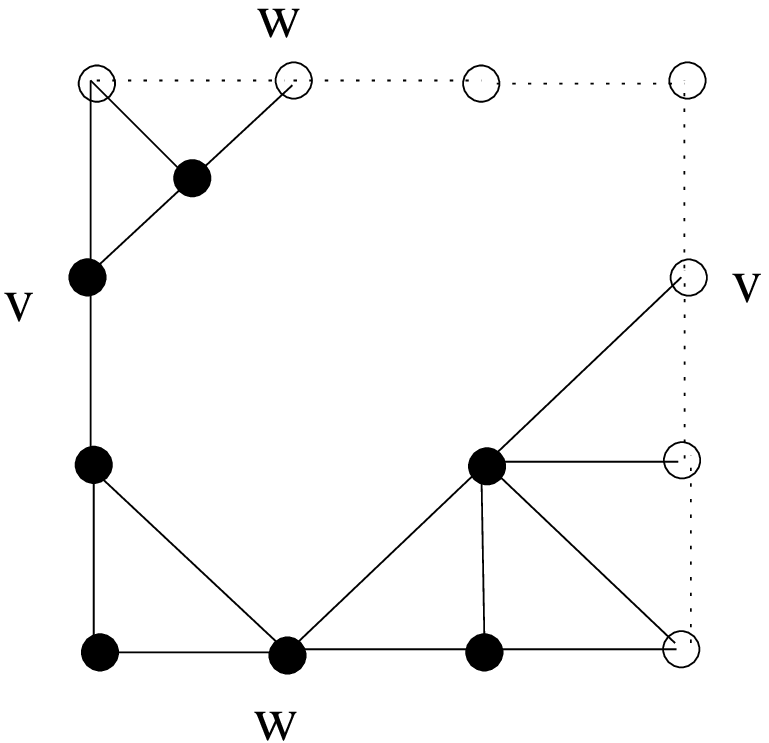}
\caption{Rectangular face graph representations for $H_{8}, H_{9}$.}
\label{f:H8H9}
\end{figure}
\end{center} 

We now look at the nonalternating case so that the cyclic word for the detachment map $\alpha$ is  $vavbwcwd$. Since $G$  is simple it follows that $a$ and $c$ are at least $3$ and so there is a unique form up to relabelling and ordering, namely $v3v2w3w1$. (See Figure \ref{f:vvtypeNotPoss}.)

\begin{center}
\begin{figure}[ht]
\centering
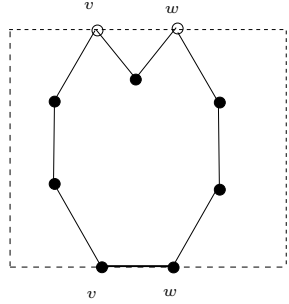
\caption{Boundary graph  type $v3v2w3w1$.}
\label{f:vvtypeNotPoss}
\end{figure}
\end{center}

In this case we see that $G$ contains a triangulated sphere which is formed by the part of the triangulated torus for $G$ which lies "between" the two nonfacial $3$-cycles associated with the subwords $v3v$ and $w3w$. Moreover $G$ contains the augmentation of this subgraph by the edge $vw$ and so $G$ cannot be $(3,6)$-tight.
Figure \ref{f:3v3v3} also indicates a perspective view of such a graph.
Since $G$ is assumed to be $(3,6)$-tight, this type of detachment map cannot occur.

\begin{center}
\begin{figure}[ht]
\centering
\includegraphics[width=4cm]{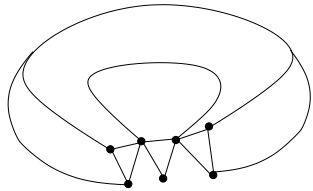}
\caption{A torus graph with a single hole satisfying the Maxwell count $f(G)=6$ which is not $(3,6)$-tight.}
\label{f:3v3v3}
\end{figure}
\end{center} 

Consider now the possibility of a detachment map with
$3$ (pairwise) repeated vertices and no repeated edges, so that $|V(\partial G)|=6$ and $|E(\partial G)|=9$. There are $2$ types of attachment map of types $v1w2x1v2w1x2$ and $ v1w1x1v2w2x2$
are represented by the graphs $H_{10}, H_{11}$ of  Figure \ref{f:vwxType}.
\begin{center}
\begin{figure}[ht]
\includegraphics[width=3cm]{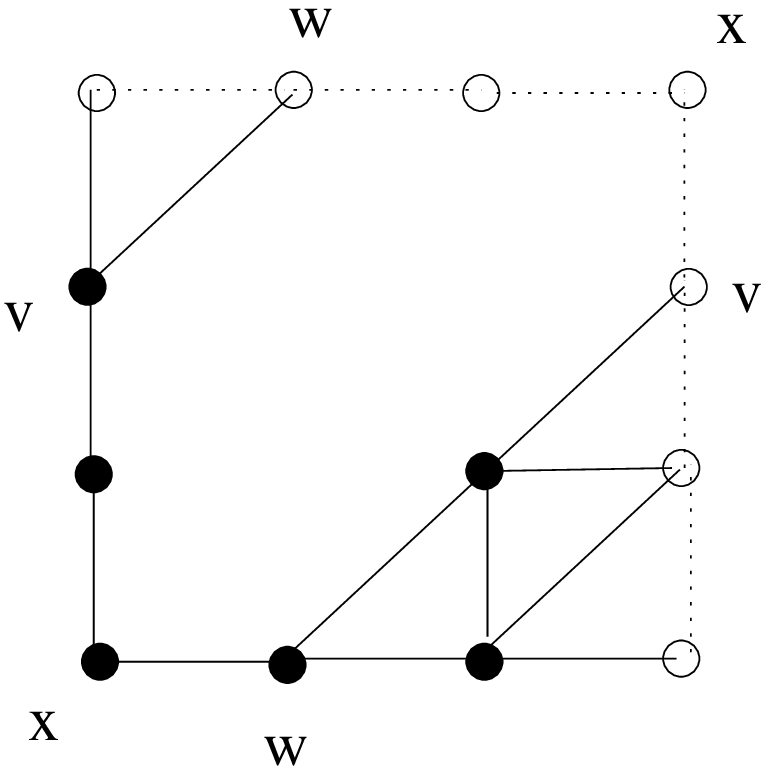}
\quad
\includegraphics[width=3cm]{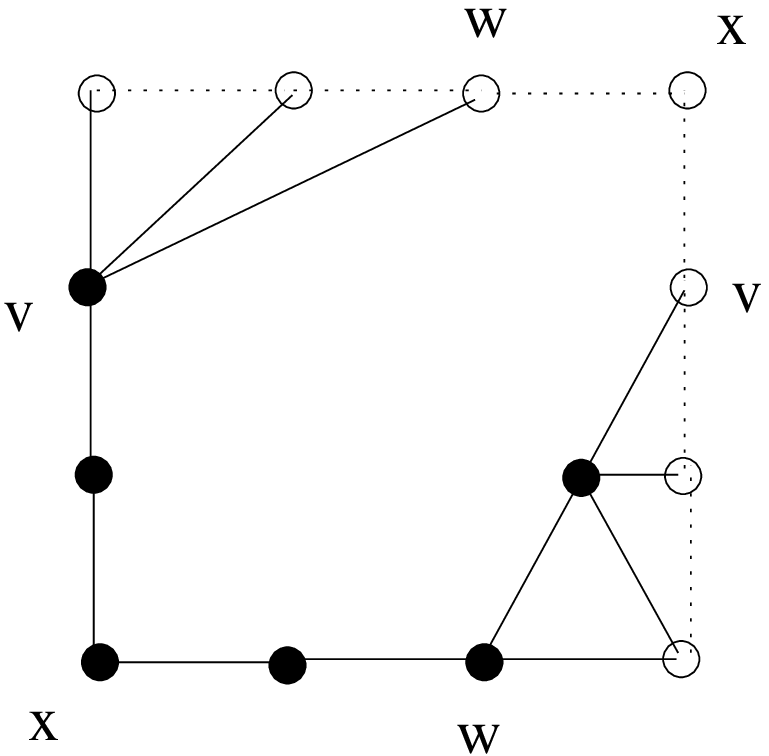}
\caption{Face graphs for $H_{10}, H_{11}$, with types $v1w2x1v2w1x2$ and $ v1w1x1v2w2x2$.}
\label{f:vwxType}
\end{figure}
\end{center} 
In fact there are no other forms possible for graphs in $\T$. Indeed, in analogy with the graph types of Figure \ref{f:3v3v3}, a torus with hole graph whose cyclic word contains  disjoint subwords of the form $w1v$ and $v2w$  is not $(3,6)$-tight.

We next consider further types with fewer than $9$ edges in the boundary graph. The first case to consider is 
\[
|V(\partial G)| = 6, \quad \quad  |E(\partial G)| = 8.
\]
In this case there are $3$ types of detachment map, for simple torus with hole graphs, and these correspond to the cyclic words
\[
v3e2v1e1, \quad v3e1v2e1, \quad  v2e2v2e1
\]
The structure of these words is depicted in Figure \ref{f:veType}
and representative vertex minimal graphs in $\T$ are given in Figure \ref{f:H12H13H14}.

\begin{center}
\begin{figure}[ht]
\centering
\includegraphics[width=6.5cm]{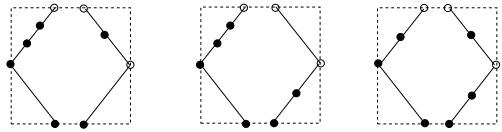}
\caption{Hole types $v3e2v1e1$ etc., for $H_{12}, H_{13}, H_{14}$}
\label{f:veType}
\end{figure}
\end{center}

\begin{center}
\begin{figure}[ht]
\includegraphics[width=3cm]{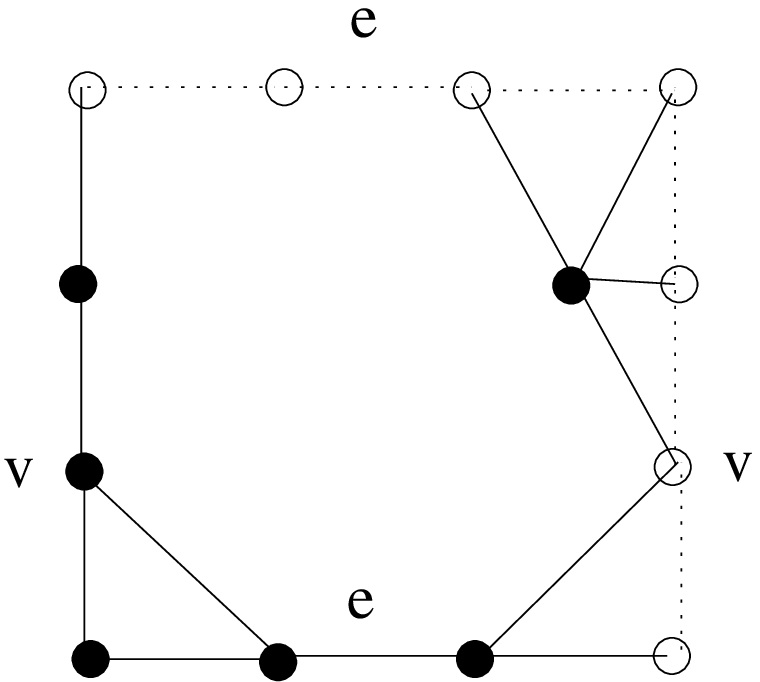}
\quad
\includegraphics[width=3cm]{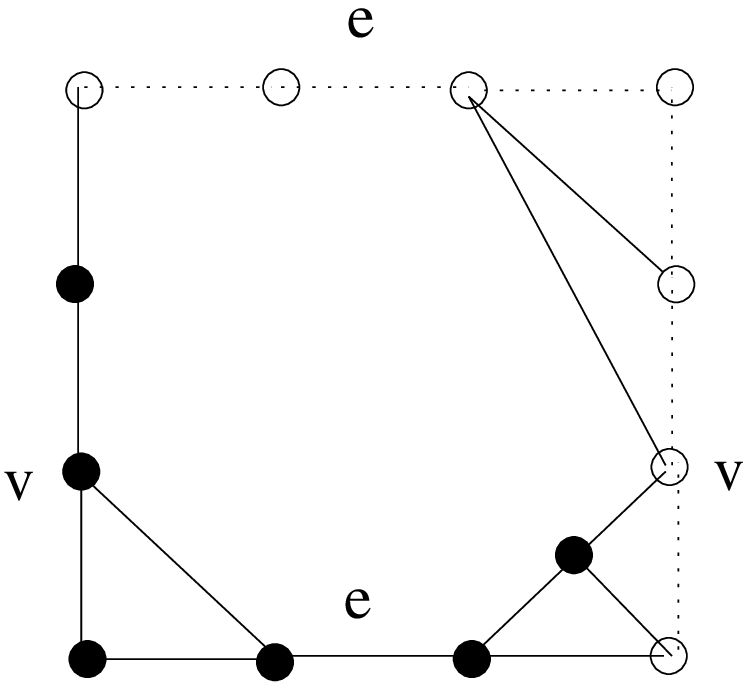}
\quad
\includegraphics[width=3cm]{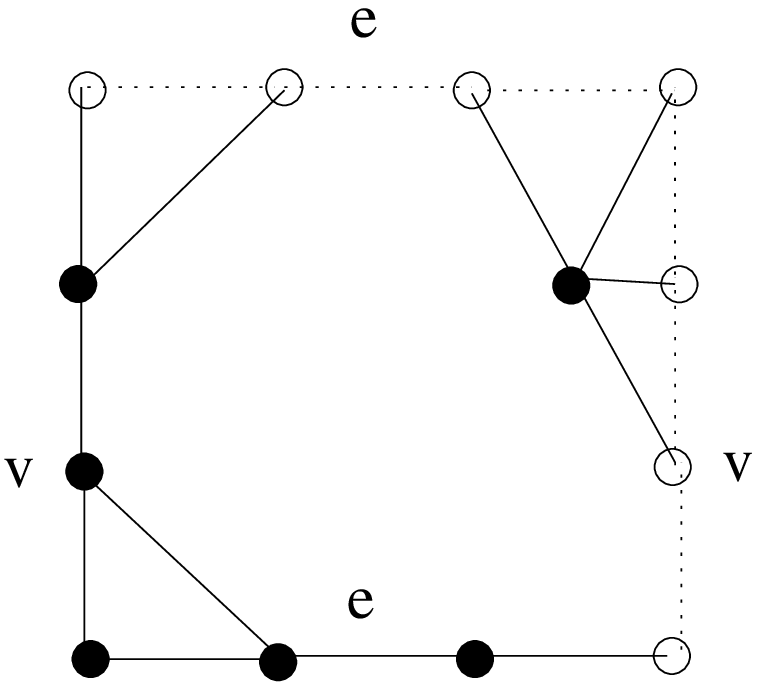}
\caption{Rectangular face graph representations for $H_{12}, H_{13}, H_{14}$.}
\label{f:H12H13H14}
\end{figure}
\end{center} 

The next cases are for torus with hole graphs  with
\[
|V(\partial G)| = 5, \quad  |E(\partial G)| = 8.
\]
Here the detachment map covers one edge of the boundary graph twice and two further vertices, $v, w$ are covered twice.
Such a graph in $\T$  is the graph $H_{15}$ in Figure \ref{f:k5minuseB}. This has cyclic type $v1e1w2v1e1w1$ and we note that the $v$ and $w$ vertices are alternating.
The only other possible cyclic type  is the nonalternating case   $v1e1v2w1e1w1$. Arguing as in the previous nonalternating case (depicted in Figure \ref{f:vvtypeNotPoss}) it follows that the graph $G$ cannot be $(3,6)$-tight. 

\begin{center}
\begin{figure}[ht]
\centering
\includegraphics[width=6.5cm]{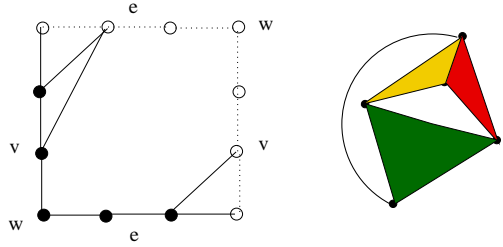}
\caption{The graph $H_{15}$ and its facial structure.}
\label{f:k5minuseB}
\end{figure}
\end{center} 

We next consider the case
\[
|V(\partial G)| = 5, \quad  |E(\partial G)| = 7.
\]
Note that up to relabelling and order there is one form of cyclic word, namely $e1f2e1f1$, and so one form of detachment map for graphs in $\T$.
Figure \ref{f:k5minuse} indicates  a vertex minimal representative, $H_{16}$, for this type.

\begin{center}
\begin{figure}[ht]
\centering
\includegraphics[width=6.5cm]{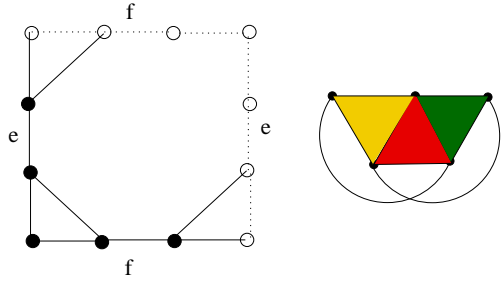}
\caption{The graph $H_{16}$ with detachment map type $e1f2e1f1$.}
\label{f:k5minuse}
\end{figure}
\end{center} 


Finally we consider the case of boundary graphs with $4$ vertices. There is one possible form of detachment map, with cyclic word $ef1ge1fg1$. A vertex minimal representative is given by the graph
$H_{17}$ in $\T$ in Figure \ref{f:H12}. 

\begin{center}
\begin{figure}[ht]
\centering
\includegraphics[width=6.5cm]{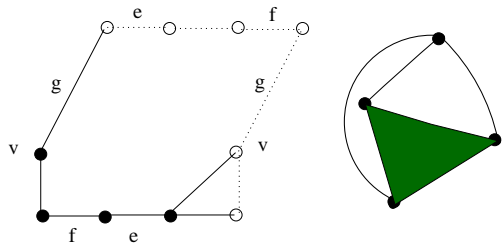}
\caption{The graph $H_{17}$ with detachment type $ef1ge1fg1$.}
\label{f:H12}
\end{figure}
\end{center} 

Note that $K_3$ is not a torus with hole graph $G$ arising from any triple $(T,D,i)$ where $D$ is  a $9$-cycle. 
To see this note that the map $i : \partial D \to \partial G$ must cover at least one edge of $\partial G$ more than twice. On the other hand each edge of $G$ is incident to at most $2$ faces, and $i$ is injective on faces, so this is not possible.
\end{proof}

We remark that the graphs $H_{16}$ and $H_{17}$ are uncontractible torus with hole graphs $G$ in the sense that there are no edges belonging to two faces in $G$ whose contraction yields a simple graph (and hence a torus with hole graph). On the other hand $H_1, \dots , H_{15}$ do have such edges, referred to as $FF$ edges in the next section.

We also remark that it follows from the main theorem that every graph in $\T$ is $3$-connected. However, this may also be proved directly by an embedded graph argument analogous to the one used in the exclusion of the small graph in Figure \ref{f:vvtypeNotPoss}.

\medskip
\medskip

\begin{center}
\begin{figure}[ht]
\centering
\begin{tabular}{|c|c|}
\hline
Cyclic word  &$G \in \T$   \\
\hline
\hline
 $v9$ &$H_1$   \\
\hline
$v3v6$&$H_2$    \\
\hline
 $v4v5$&$H_3 $   \\
\hline
 $e3e4$&$H_4 $  \\
\hline
$v1w2v2w4$&$H_5$    \\
\hline
$v1w2v3w3$&$H_6 $   \\
\hline
$v1w2v4w2$&$H_7$  \\
\hline
$v1w3v2w3$&$H_8$   \\
\hline
$v2w3v2w2$&$H_9$   \\
\hline
$v1w2x1v2w1x2$&$H_{10}$   \\
\hline
 $v1w1x1v2w2x2$&$H_{11}$   \\
\hline
$v3e2v1e1$ &$H_{12}$  \\
\hline
$v3e1v2e1$&$H_{13}$   \\
\hline
$v2e2v2e1$&$H_{14}$   \\
\hline
$v1e1w2v1e1w1$ &$H_{15}$  \\
\hline
$e1f2e1f1$&$H_{16}$   \\
\hline
$ef1ge1fg1$ &$H_{17}$
\\
\hline
\end{tabular}
\caption{The cyclic words that label the $17$ forms of detachment maps for graphs in $\T$ and a selection of associated vertex minimal graphs.}
\label{f:table}
\end{figure}
\end{center}

\subsection{Torus graphs with several holes}

There is an evident modification of Def.~\ref{d:torusgraphwithhole} which  defines a torus graph with \emph{several} (superficial) holes. We note the following two examples which are also $(3,6)$-tight.
Figure  \ref{f:rectangular1} shows a
rectangular face graph $R_1$ for a torus graph $G_1$ with two holes. Two triples of edges (dashed) have been deleted from the interiors of two triangulated discs in $R_1$.
The graph $G_1$ is $(3,6)$-tight and has a separating pair of vertices. In particular it is not $3$-rigid.
\begin{center}
\begin{figure}[ht]
\centering
\includegraphics[width=3cm]{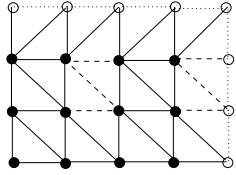}
\caption{The rectangular face graph $R_1$}
\label{f:rectangular1}
\end{figure}
\end{center}

Figure  \ref{f:rectangular2} indicates a  rectangular face graph $R_2$ for a torus graph $G_2$ with $6$ holes. The graph $G_2$ may be obtained from $G_1$ by the addition of two degree $3$ vertices and so $G_2$ is also $(3,6)$-tight and  fails to be $3$-rigid.
\begin{center}
\begin{figure}[ht]
\centering
\includegraphics[width=3cm]{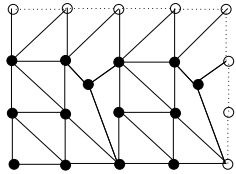}
\caption{The rectangular face graph $R_2$}
\label{f:rectangular2}
\end{figure}
\end{center} 

\section{Contraction moves in  $\T$ and critical separating cycles} 
Let $G$ be a torus graph with a single hole.
An edge of $G$ is of {\em type $FF$} if it is contained in two facial $3$-cycles and an $FF$ edge is {\em contractible} if it is not contained in any non-facial $3$-cycle, or, equivalently, if the contraction of the edge creates a simple graph (and thus a torus with hole graph). 
The contraction of a contractible $FF$ edge need not preserve $(3,6)$-tightness, and therefore membership in the class $\T$. However we shall show that when this occurs there exists a critical separating cycle and an associated graph division $G \to \{G_1, G_2^\circ\}$, where $G_1 \subseteq G$ is a graph in $\T$ and $G_2^\circ$ is a (possibly degenerate) annular graph whose two bounding cycles are $9$-cycles. By attaching  $G_2^\circ$ to the appropriate small graph $H_i$ (provided by Lemma \ref{l:smallgraphs}) whose detachment map agrees with that of $G_1$, we obtain a torus with hole graph $G_2$. Remarkably,  in all cases $G_2$ is simple and  $(3,6)$-tight, and  we refer to the resulting move
$G \to \{G_1, G_2 \}$ as a \emph{graph fission move} for the class $\T$.

\subsection{Critical separating cycles} 

Let $G$ be a torus graph with a single hole with triple $(T,D,i)$. Let $c$ be an $r$-cycle of edges in $G$. Then $c$ is said to be a \emph{hole separating cycle} or simply a \emph{separating cycle} if the context is clear, if there is a subgraph $G_1 \subseteq G$ which is a torus graph with a single hole with triple
$(T, D_1, i_1)$, where $D \subseteq D_1,$ and $c$ is the cycle $i_1(\partial D_1)$. In other words, a separating cycle is given by the image of the boundary of an enlargement $D_1$ of $D$ appearing in a commuting diagram of simplicial maps, 
\[
\begin{tikzcd}
D \arrow{r}{i} \arrow[hookrightarrow]{d} & T \arrow[rightarrow]{d}{{\rm id}} \\
D_1  \arrow{r}{i_1} & T
\end{tikzcd}
\]

A separating cycle gives a \emph{division move} $G\to \{G_1,G_2^\circ \}$ where $G_1$ is the torus graph with a single hole defined by a triple $(T,D,i_1)$, and $G_2^\circ$ is the complementary graph determined by the vertices of $G$ which are not interior vertices (nonboundary vertices) of $G_1$. 

The graph $G_2^\circ$ may be viewed as a triangulated annulus graph  with $2$ boundary cycles of length $9$ (which can nevertheless coincide at certain vertices and edges) where, in each cycle,  certain outer boundary vertices and edges may be identified. In particular $G_2^\circ$ need not be a planar graph when the separating cycle has double self contact.  This is the case for the critical separating cycle indicated in  Figure \ref{f:nonplanarannulus} for example. 


\begin{center}
\begin{figure}[ht]
\includegraphics[width=3cm]{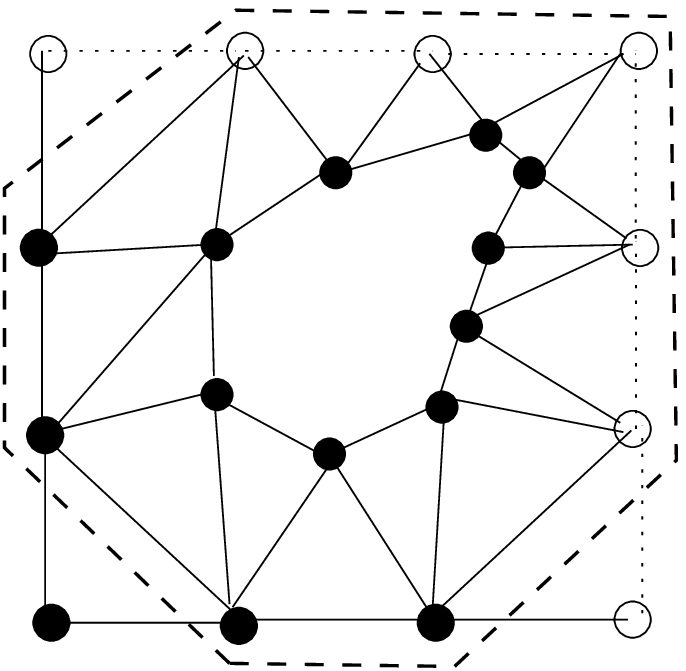}
\caption{A critical separating cycle with nonplanar annular graph $G_2^\circ$.}
\label{f:nonplanarannulus}
\end{figure}
\end{center}

\begin{definition}
Let $G \in \T$. Then a separating cycle for $G$ is a \emph{critical separating cycle} if the graph $G_1$ for the associated division move $G\to \{G_1, G_2^\circ\}$ is $(3,6)$-tight.
\end{definition}

The following ``filling in" lemma will be useful for the analysis of critical separating cycles. It may be paraphrased as the assertion that a $(3,6)$-tight subgraph of a graph $G\in \T$ contains no holes on its surface, bounded by $4$ or more edges, which do not contain the superficial hole of $G$. 

\begin{lemma}\label{l:fillingin}
Let $G\in \T$ and let $\tau$ be a (possibly improper) cycle of edges in $G$ which is given by the boundary cycle of the graph $H$ of an embedded triangulated disc in $G$. Let $K$ be a $(3,6)$-tight subgraph of $G$ with $K\cap H = \tau$. Then ${\tau}$ is a $3$-cycle.
\end{lemma}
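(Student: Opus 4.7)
The plan is to apply a Maxwell-type freedom count to the subgraph $K\cup H$ of $G$, using the inclusion--exclusion identity
\[
f(K\cup H) = f(K) + f(H) - \bigl(3|V(K\cap H)| - |E(K\cap H)|\bigr),
\]
and then exploiting $(3,6)$-tightness of $G$ to bound the length of $\tau$. Since $K\cap H = \tau$ by hypothesis, the correction term depends only on $|V(\tau)|$ and $|E(\tau)|$, and $f(K)=6$ since $K$ is $(3,6)$-tight.

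The two computations I would carry out are as follows. Let $r_0$ denote the length of the boundary cycle of the abstract triangulated disc whose image in $G$ is $H$, and let $k_v$, $k_e$ count the number of boundary vertex and edge identifications, so that $|V(\tau)|=r_0-k_v$ and $|E(\tau)|=r_0-k_e$. First, a standard Euler count on a triangulated disc with proper boundary cycle of length $r_0$ gives $|E|=3|V|-3-r_0$, so the freedom number of such a disc is $r_0+3$; applying the identifications then yields $f(H)=r_0+3-3k_v+k_e$. Substituting into the inclusion--exclusion formula,
\[
f(K\cup H) \;=\; 6 + (r_0+3-3k_v+k_e) - \bigl(3(r_0-k_v)-(r_0-k_e)\bigr) \;=\; 9-r_0,
\]
and pleasantly the identification parameters $k_v,k_e$ cancel out.

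Since $K\cup H$ is a subgraph of $G$ with $|V(K\cup H)|\geq |V(K)|\geq 3$, the $(3,6)$-tight condition on $G$ forces $f(K\cup H)\geq 6$, whence $r_0\leq 3$. Combined with the trivial bound $r_0\geq 3$ for any cycle, this gives $r_0=3$, and since $G$ is simple the cycle $\tau$ is then a genuine $3$-cycle. The main point requiring care is the derivation of $f(H)$ in the presence of boundary identifications, and verifying that the ``improper boundary'' case is subsumed by the same formula; the key observation that makes the argument uniform is the cancellation of $k_v$ and $k_e$, which means one never has to do a separate analysis for proper versus improper $\tau$.
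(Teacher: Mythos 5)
Your proof is correct and follows essentially the same route as the paper: both rest on the inclusion--exclusion identity $f(K\cup H)=f(K)+f(H)-f(\tau)$, the $(3,6)$-sparsity of $G$ applied to $K\cup H$, and the observation that the vertex/edge identification corrections cancel between $f(H)$ and $f(\tau)$ in the improper case. The only difference is cosmetic: by computing $f(H)-f(\tau)=3-r_0$ explicitly you need only the one-sided bound $f(K\cup H)\geq 6$ together with $r_0\geq 3$, whereas the paper additionally uses the complementary graph $H^c$ to establish $f(H)-f(\tau)\leq 0$ --- a step your version shows to be dispensable.
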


\begin{proof}Note that the embedded assumption on $H$ is in the same sense as used for detachment maps, namely that $H$ is determined
by a simplicial map from a triangulated disc to the simplicial complex for $G$ with the property of being injective on $2$-simplexes. Let us write $H^c$ for the complementary graph to $H$
which contains $\tau$ and the edges of $G$ which are not in $H$.
Since $G = H^c\cup H$ and 
$H^c\cap H= \tau$ we have
\[
6=f(G) =f(H^c) +f(H)-f(\tau).
\]
Since $f(H^c)\geq 6$
we have 
$f(H)-f(\tau)\leq 0$.
On the other hand,
\[
6\leq f(K\cup H) =
f(K)+ f(H) -f(\tau)
\]
and $f(K)=6$ and so
it follows  that $f(H) -f(\tau)=0$.

If ${\tau}$ is a proper cycle then the identity
$f(H) -f(\tau)=0$  asserts that the freedom number of the proper boundary graph  $\partial H$ of the triangulated disc $H$ is equal to the freedom number of $H$ and this is only possible if the boundary is a $3$-cycle.
On the other hand, if $\tau$ is not a proper cycle and  $H_1/\sim$ is a simple graph, $H_2$ say,
obtained from a triangulated disc $H_1$ by the identification of some vertices and edges of $\partial H_1$, then 
the differences $f(H_2)-f(H_1)$ and  $f(\partial H_2)-f(\partial H_1)$ coincide. Thus it 
follows once more that $\tau$ is a $3$-cycle. 
\end{proof}

\subsection{Contraction and fission}
We now give a key lemma, Lemma \ref{l:toruskey}, which will be used for the deconstruction and construction of graphs in $\T$. The proof makes use of the following  topological property of certain open sets $U$ on the torus.

\begin{center}
\begin{figure}[ht]
\includegraphics[width=3.3cm]{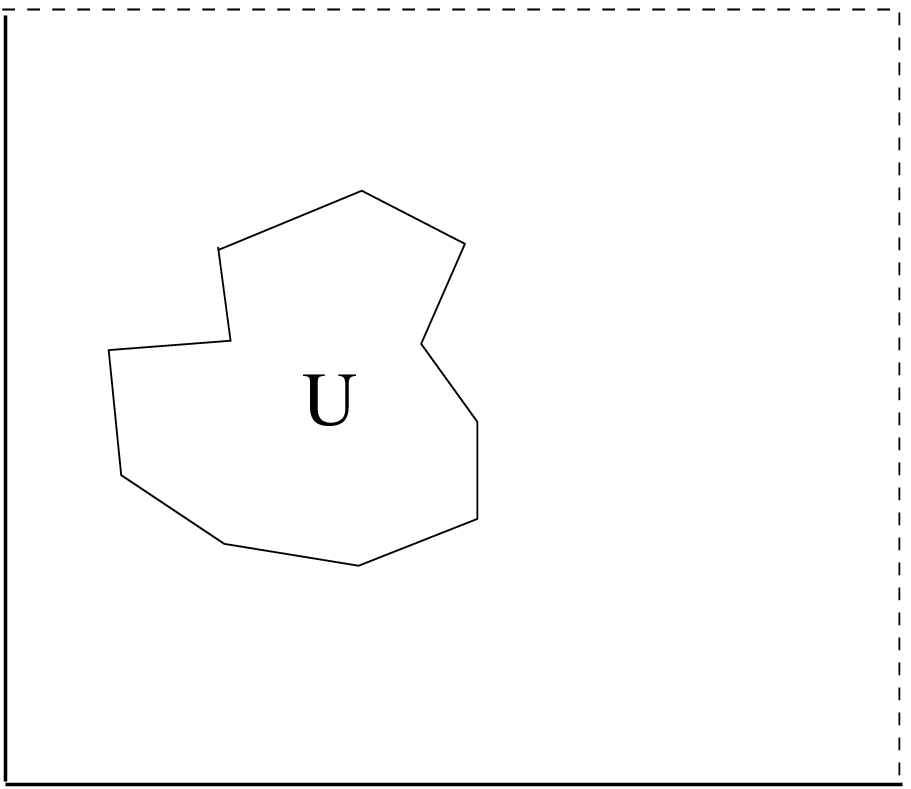}
\quad
\includegraphics[width=3.3cm]{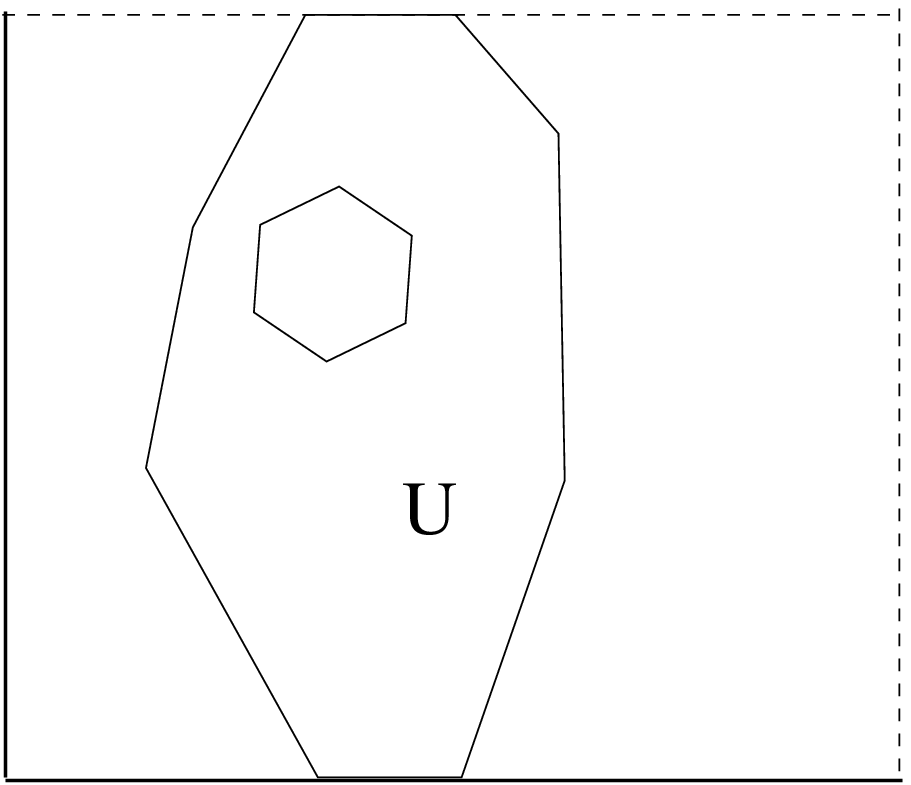}
\quad
\includegraphics[width=3.3cm]{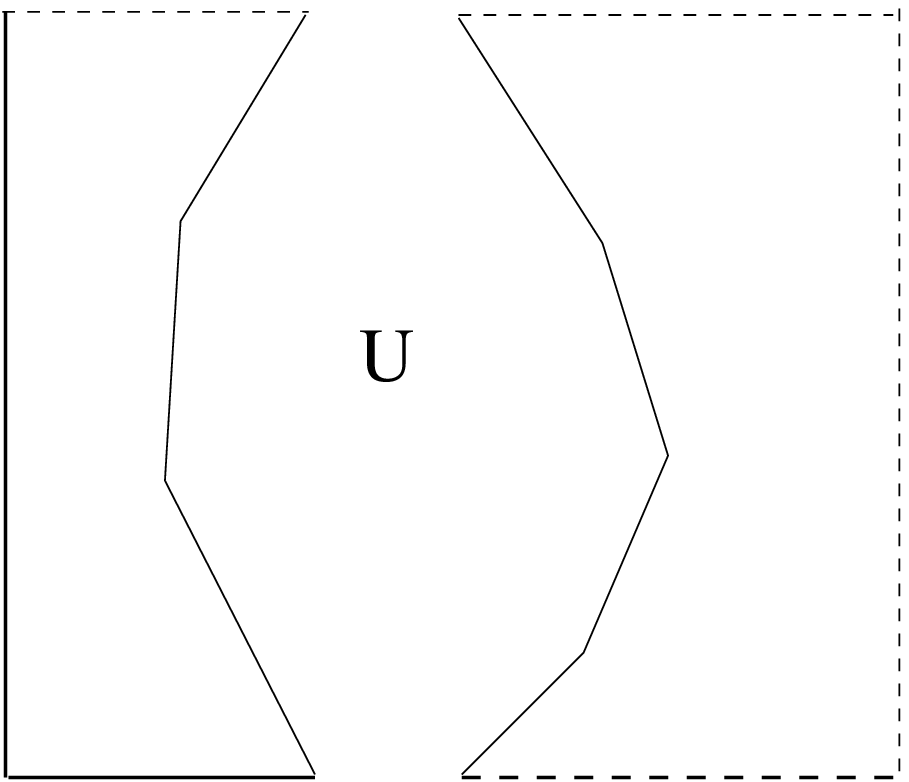}
\caption{Open sets $U$ in $S^1 \times S^1$ for the cases (i), (ii), (iii).}
\label{f:Ucases}
\end{figure}
\end{center}

\begin{lemma}\label{l:torusdichotomy}Let $\S$ be a family of (embedded) triangles
for a full triangulation of the topological torus $S^1\times S^1$.
Let $U$ be a connected open subset of  $S^1\times S^1$ which is the interior of the union of a subset $\F$ of closed triangles of  $\S$.
Then one of the following occurs.

(i)  $U$ is homeomorphic to an open disc.

(ii) The complement of $U$  is disconnected.

(iii) The complement of $U$  is homeomorphic to a closed subset of a $2$-sphere.
\end{lemma}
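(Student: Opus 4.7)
The plan is to split into two cases according to whether $U$ is simply connected, and then, in the non--simply-connected case, to further split on the homotopy class in $T = S^1\times S^1$ of a simple closed representative of a nontrivial loop in $U$.

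Since $U$ is a connected open subset of the torus $T$, it is a connected open orientable $2$-manifold. If $U$ is simply connected then by the classification of simply connected surfaces $U$ is homeomorphic to $\bR^2$, i.e.\ an open disc, and case (i) holds.

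Assume now $\pi_1(U)\neq 1$. By the standard loop-simplification fact for surfaces, any nontrivial free-homotopy class in an orientable surface is a power of a class represented by a simple closed curve, so there exists a simple closed curve $\gamma\subset U$ that is nontrivial in $\pi_1(U)$. I split on whether $\gamma$ is essential in $T$. If $\gamma$ is essential, then $\gamma$ is non-separating (since on the torus every separating simple closed curve is null-homotopic, by a one-line Euler-characteristic check on the two pieces it produces), so $T\setminus\gamma$ is an open cylinder. Because $\gamma\subset U$, the complement $K = T\setminus U$ is disjoint from $\gamma$ and therefore sits inside $T\setminus\gamma$; since an open cylinder embeds in $S^2$ as an annular region, $K$ embeds in $S^2$, giving case (iii).

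If instead $\gamma$ is null-homotopic in $T$, then it bounds an open disc $D\subset T$ with closure $\overline{D} = D\cup\gamma$, and the non-triviality of $\gamma$ in $\pi_1(U)$ forces $D\not\subset U$, so $K\cap D\neq\emptyset$. Combined with $K\cap\gamma=\emptyset$, we get the decomposition into closed sets
\[
K = (K\cap D)\sqcup\bigl(K\cap(T\setminus\overline{D})\bigr).
\]
If $K\cap(T\setminus\overline{D})=\emptyset$ then $K\subset D$ is a compact subset of an open disc and embeds in $S^2$, so case (iii) holds; otherwise both pieces are nonempty and the decomposition exhibits $K$ as disconnected, so case (ii) holds.

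The main obstacle is producing the simple closed curve $\gamma$ in $U$; this is a standard surface-topology input, provable either by taking a shortest-length representative in any Riemannian metric (automatically a simple closed geodesic for a primitive nontrivial class) or by a combinatorial loop-simplification argument on the underlying triangulation. The other topological facts used---the classification of simply connected surfaces and the null-homotopy of separating simple closed curves on the torus---are elementary.
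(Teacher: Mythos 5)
Your proof is correct and follows essentially the same route as the paper: produce a simple closed curve in $U$ that is not null-homotopic in $U$, then split on whether it is null-homotopic in $T$, using the two complementary regions of the bounding disc to force (ii) or (iii) in the first case and the complementary open cylinder to force (iii) in the essential case. The only cosmetic differences are that you cut along $\gamma$ itself rather than a small neighbourhood of it, and that you make explicit (correctly) the standard inputs the paper leaves implicit, namely the classification of simply connected open surfaces and the existence of an essential embedded representative.
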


\begin{proof}
If $U$ is not homeomorphic to an open disc then there is a closed simple smooth path $\pi$ which is not homotopic in $U$ to a point. 

Suppose first that $\pi$ is homotopic to a point in $S^1\times S^1$. Then the complement of the range of $\pi$ in $S^1\times S^1$ has two open path-wise connected components, $V_1, V_2$, one of which, $V_1$ say, is homeomorphic to an open disc with boundary the range of $\pi$. If (ii) fails then the complement of $U$ can only meet one of the sets $V_1, V_2$. If it is $V_2$
then $V_1 \subseteq U$, contrary to the fact that $\pi$ is not homotopic in $U$ to a point. If it is $V_1$ then the complement of $U$ is contained in $V_1$ and so (iii) holds.

On the other hand if $\pi$
is not homotopic in $S^1 \times S^1$ to a point then it has nonzero homology class in $H_1(S^1\times S^1,\bZ)$. In this case a sufficiently small neighbourhood of (the range of) $\pi$ has a complementary set which is homeomorphic to a cylinder, and so (iii) follows.
\end{proof}

\begin{lemma}\label{l:toruskey}
Let $G\in\T$, let $e$ be a contractible $FF$ edge in $G$, and let $G'$ be the simple graph arising from the contraction move $G \to G'$ associated with $e$.
Then either $G'\in \T$ or the edge  $e$ lies on a (nontrivial) critical separating cycle.
\end{lemma}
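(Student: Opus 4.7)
The plan is to show that failure of $G'$ to be $(3,6)$-tight forces the existence of a proper $(3,6)$-tight subgraph $K\subsetneq G$ containing $e$, and then to use the topology of the embedding on the torus to identify the boundary of $K$ as the sought critical separating cycle through $e$.

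First I would verify that contracting a contractible $FF$ edge preserves the Maxwell count. Contractibility of $e$ ensures that its endpoints $v_1,v_2$ have no common neighbours in $G$ besides the two apex vertices $w_1,w_2$ of the facial $3$-cycles containing $e$, so the contraction eliminates one vertex, the edge $e$, and exactly two duplicated edges, giving $f(G')=f(G)=6$. Assuming $G'\notin\T$, I would choose a subgraph $K'\subseteq G'$ with $|V(K')|\geq 3$ and $f(K')\leq 5$. Since $G\in\T$, the contracted vertex $v$ must lie in $V(K')$, else $K'\subseteq G$ would itself violate tightness of $G$. Then I would build the pullback $K\subseteq G$ with $V(K)=(V(K')\setminus\{v\})\cup\{v_1,v_2\}$, including $e$, pulling each edge of $K'$ back to its unique preimage when the non-$v$ endpoint lies outside $\{w_1,w_2\}$, and to both preimage edges when it is $w_i$. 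Writing $d_1=|\{i:w_iv\in E(K')\}|$, a direct count gives $f(K)=f(K')+2-d_1$, and applying $G\in\T$ to $K$ yields $f(K')\geq 4+d_1$, which combined with $f(K')\leq 5$ forces $d_1\leq 1$.

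The delicate step is to arrange $f(K)=6$, so that $K$ is genuinely $(3,6)$-tight. The configurations $(f(K'),d_1)=(4,0)$ and $(5,1)$ give this immediately. For the residual configuration $(f(K'),d_1)=(5,0)$ I would first show that $w_1,w_2\notin V(K')$: otherwise adjoining the edge $w_iv$ to $K'$ would decrease $f$ by one to produce a subgraph whose pullback has $f=5$, contradicting $G\in\T$. I would then enlarge $K'$ by adjoining $w_i$ together with all edges of $G'$ from $w_i$ to $V(K')$ and, by computing the effect on $f$ and reapplying $G\in\T$, argue that we may reduce to the already-treated case $(5,1)$. The strict inequality $f(K')<f(G')$ also forces $K\neq G$.

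Finally, view $G$ as an embedded graph on the torus, with the superficial hole a distinguished open disc, and set $|K|$ equal to the union of those facial $3$-cycles of $G$ all of whose edges belong to $K$. Since $d_1\leq 1$, at most one of the two faces containing $e$ lies in $|K|$, so $e$ sits on the topological boundary of $|K|$. Completing the triangulation by filling in the superficial hole via the abstract disc $D$, I would apply Lemma~\ref{l:torusdichotomy} to the family of facial triangles outside $|K|$, and use Lemma~\ref{l:fillingin} to argue that any complementary region disjoint from the superficial hole must be bounded by a $3$-cycle, hence be a single facial triangle of $G$ that can be absorbed back into $|K|$; iterating rules out cases (ii) and (iii), leaving case (i): the complement of $|K|$ in the torus is a single open disc containing the superficial hole, with topological boundary a simple cycle $c$ in $K$ passing through $e$. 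This cycle is the image under an extended simplicial map $i_1$ of the boundary of an enlargement $D_1\supseteq D$ of the hole disc, so $c$ is a hole separating cycle; since $G_1=K$ is $(3,6)$-tight, $c$ is a nontrivial critical separating cycle through $e$. The principal obstacle is the case analysis securing $(3,6)$-tightness of the pullback; once $f(K)=6$ is established, the topological argument via Lemmas~\ref{l:torusdichotomy} and~\ref{l:fillingin} runs cleanly to the conclusion.
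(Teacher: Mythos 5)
Your overall architecture is the paper's: pull back a violating subgraph of $G'$ to a subgraph $K$ of $G$ containing $e$, and then use Lemma~\ref{l:torusdichotomy} together with Lemma~\ref{l:fillingin} to show that the complementary region adjacent to $e$ is a disc containing the hole, whose boundary is the critical separating cycle. Your counting ($f(K)=f(K')+2-d_1$, hence $d_1\le 1$ and $(f(K'),d_1)\in\{(4,0),(5,0),(5,1)\}$) is correct, and in the configuration $(5,1)$ --- where $K$ is tight and contains exactly one of the two faces of $e$ --- your argument runs parallel to the paper's first case and is sound.

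The gap is the case where $K$ contains neither facial $3$-cycle of $e$, i.e.\ your configurations $(4,0)$ and $(5,0)$. For $(5,0)$, your proposed enlargement of $K'$ by $w_i$ and its edges into $V(K')$ only produces a sparsity violation if $w_i$ has at least three neighbours in $V(K')$; nothing forces this (the sparsity of $G$ only gives the upper bound of three), and if $w_i$ has one or two such neighbours the freedom count goes up and no reduction to $(5,1)$ is obtained. More seriously, even in the configuration $(4,0)$ where $K$ \emph{is} tight, both faces $c$ and $d$ of $e$ lie outside $|K|$, and a short filling-in argument shows they must lie in the \emph{same} complementary component, namely the one containing the hole. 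That component then abuts $e$ from both sides, its boundary walk traverses $e$ twice, and it is not the image of the boundary of a single enlarged disc $D_1\supseteq D$; so one cannot simply read off a separating cycle through $e$ from the boundary of $|K|$. The paper resolves exactly this case by a different, global device: it forms three maximal adjacency-connected face collections $H_c$, $H_d$, $H_0$ (avoiding $c$, avoiding $d$, avoiding both), obtains two triangulated discs $D_1$, $D_2$ each containing the hole and one face of $e$, and uses $6=f(D_1^c)+f(D_2^c)-f(K)$ with $f(K)\in\{6,7\}$ and $f(D_i^c)\ge 6$ to conclude that one of $D_1^c$, $D_2^c$ is $(3,6)$-tight, its boundary being the required critical cycle through $e$. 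Some version of this two-sided argument is needed; your proposal as written does not supply it.
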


\begin{proof} Let $(T,D,i)$ be a defining triple for $G$ and suppose that $G'\notin \T$. Note that the Maxwell count is preserved on contraction of the edge $e$ and so $G'$ must fail the $(3,6)$-sparsity count.
Thus there exists a subgraph $K$ of $G$ containing $e$ for which the edge contraction results in a graph $K'$ satisfying $f(K')<6$. 

Let $e=vw$ and let $c$ and $d$ be the facial $3$-cycles which contain $e$.
Note that if  both $c$ and $d$ are subgraphs of $K$ then 
$f(K)=f(K')<6$, which contradicts the sparsity count for $G$. Thus $K$ must contain either one or neither of these facial $3$-cycles.  

Suppose first that $K$ is a maximal subgraph among all subgraphs of $G$ which contain the cycle $c$ but not $d$ and for which contraction of $e$ results in a graph $K'$ which fails the $(3,6)$-sparsity count.
Note that $f(K)=f(K')+1$ which implies $f(K)= 6$ and $f(K')=5$.
In particular, $K$ is $(3,6)$-tight.

Let $\C$ be a maximal adjacency-connected collection of facial $3$-cycles in $T$ containing $d$, with the property that no facial $3$-cycle in $\C$ is a subgraph of $K$. Considering $T$ as embedded on the torus we claim that the interior $U$ of the union of the embedded triangles for the $3$-cycle faces in $\C$ has an interior set $U$ which is homeomorphic to the open unit disc. Indeed, if this were not the case then by the previous lemma there are two possibilities, namely cases (ii) and (iii). If (ii) holds then the complement of $U$ is not connected and therefore $K$ is not connected, since the boundary of $U$ meets the boundary of $K$. This contradicts $K$ being $(3,6)$-tight. If (iii) holds then $K$ and its facial $3$-cycles is embeddable on a $2$-sphere $S$. Since $e$ does not lie on a non-facial $3$-cycle of $G$ the triangulation of $K$ may be extended to a triangulation of $S$ with this property. This yields a contradiction since edge contraction of $e$ preserves the $(3,6)$-sparsity of such graphs.

Since $U$ is homeomorphic to an open disc it follows that the collection $\C$ determines an embedded triangulated disc  $\alpha(E)$ in $T$ for some triangulated disc $E$, where the graph morphism (or, more precisely, the simplicial morphism) $\alpha$ is injective on the faces of $E$.
By  the hole filling lemma, Lemma \ref{l:fillingin}, $\C$ cannot be disjoint from the set of facial $3$-cycles for $i(D)$. Since the facial $3$-cycles of $i(D)$ are absent from $K$ it follows from the definition of $\C$ (maximality) that $\alpha(E)$ contains
 $i(D)$. Thus the boundary cycle for $\alpha(E)$ is a critical separating cycle.

Suppose now that $K$ contains neither of the facial $3$-cycles which contain $e$. Then $f(K)=f(K')+2$ and so $f(K)\in\{6,7\}$.
Let $H_c$ be a maximal adjacency-connected collection of facial $3$-cycles in $T$ containing $d$ but not $c$ with the property that no facial $3$-cycle in $H_c$ is a subgraph of $K$.
Also let  $H_d$ be similarly defined for containment of  $c$ but not $d$, and let $H_0$ be similarly defined with containment of neither $c$ nor $d$. 
As before, by the maximality of these graphs and the hole-filling lemma  $H_c, H_d$ and $H_0$ contain the hole subgraph of $T$ for $G$. The subgraphs $H_c\cup H_0$ and $H_d\cup H_0$ determine two triangulated discs $D_1$ and $D_2$. By the hole-filling lemma again, we have $D_1^c\cap D_2^c=K$. Also $G= D_1^c\cup D_2^c$ and so
\[
6=f(G)=f(D_1^c)+f(D_2^c)-f(K)
\]
and so either $D_1^c$ or $D_2^c$ is $(3,6)$-tight. 
It follows that either $\partial D_1$ or $\partial D_2$ is a critical separating cycle which contains $e$. 
\end{proof}

We now show that  the move $G \to \{G_1, G_2\}$, associated with a critical separating cycle, is indeed a fission move in the class $\T$.

\begin{lemma}\label{l:fissionmove}
Let $G \to \{G_1,G_2^\circ\}$ be a division move associated with a critical separating cycle whose detachment map has associated graph $H_i$. Let $G_2=H_i \cup G_2^\circ $ be the torus with hole graph obtained by substituting $H_i$ for $G_1$ in the graph $G$. Then $G_2$ is  simple and $(3,6)$-tight.
\end{lemma}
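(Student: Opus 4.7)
The plan is to verify $f(G_2) = 6$, simplicity, and $(3,6)$-sparsity in turn, writing $\sigma^{*}$ for the critical separating cycle regarded as a subgraph of $G$, so that after the gluing $H_i \cap G_2^{\circ} = G_1 \cap G_2^{\circ} = \sigma^{*}$. Two applications of inclusion-exclusion give $f(G_2) = f(H_i) + f(G_2^{\circ}) - f(\sigma^{*})$ and $6 = f(G) = f(G_1) + f(G_2^{\circ}) - f(\sigma^{*})$; subtracting and using $f(H_i) = f(G_1) = 6$ yields $f(G_2) = 6$. For simplicity, any pair of parallel edges in $G_2$ between vertices $u, v$ must, since $H_i$ and $G_2^{\circ}$ are each simple, consist of one chord $uv \in E(H_i) \setminus E(\sigma^{*})$ and one edge $uv \in E(G_2^{\circ}) \setminus E(\sigma^{*})$; the latter, adjoined to $G_1$, forms a subgraph of $G$ with freedom count $f(G_1) - 1 = 5$, contradicting the $(3,6)$-tightness of $G$.

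For sparsity, let $K \subseteq G_2$ with $|V(K)| \geq 3$ and put $K_H = K \cap H_i$, $K_{\circ} = K \cap G_2^{\circ}$, and $\tau = K \cap \sigma^{*}$. By Lemma~\ref{l:smallgraphs}(ii) $V(H_i) = V(\sigma^{*}) \subseteq V(G_2^{\circ})$, so $V(K_H) \subseteq V(K_{\circ}) = V(K)$ and in particular $|V(K_{\circ})| \geq 3$. Compare $K$ with the transferred subgraph $K^{*} = K_{\circ} \cup G_1 \subseteq G$: using $G_2^{\circ} \cap G_1 = \sigma^{*}$, inclusion-exclusion gives
\[
f(K^{*}) = 6 + f(K_{\circ}) - f(\tau), \qquad f(K) = f(K_H) + f(K_{\circ}) - f(\tau),
\]
so that $f(K) - f(K^{*}) = f(K_H) - 6$. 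Since $K^{*} \subseteq G$ with $|V(K^{*})| \geq |V(G_1)| \geq 3$, $(3,6)$-tightness of $G$ forces $f(K^{*}) \geq 6$, and when $|V(K_H)| \geq 3$, $(3,6)$-tightness of $H_i$ gives $f(K_H) \geq 6$, hence $f(K) \geq 6$.

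The residual small cases $|V(K_H)| \in \{0, 1, 2\}$ are handled by inspection. For $|V(K_H)| \leq 1$ a short computation yields $f(K) = f(K_{\circ}) \geq 6$, and for $|V(K_H)| = 2$ with $K_H$ edgeless or with its single edge lying in $\sigma^{*}$ the contributions of $K_H$ and $\tau$ balance to give $f(K) \geq f(K_{\circ}) \geq 6$. The delicate remaining subcase is $|V(K_H)| = 2$ with $K_H$ containing an internal chord $uv$ of $H_i$: here $f(K_H) = 5$, while the simplicity argument forces $uv \notin E(\sigma^{*})$ and rules out any edge of $K_{\circ}$ between $u$ and $v$, giving $f(\tau) = 6$ and $f(K) = f(K_{\circ}) - 1$, so the strict bound $f(K_{\circ}) \geq 7$ is required. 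The main obstacle is to secure this strict bound; my plan is to exploit the embedded annular structure of $G_2^{\circ}$ on the torus together with the two facial triangles of $H_i$ adjacent to the chord, in the topological spirit of Lemma~\ref{l:torusdichotomy}, in order to exhibit a subgraph of $G$ with freedom count strictly less than $6$, thereby contradicting the $(3,6)$-tightness of $G$ in the remaining subcase.
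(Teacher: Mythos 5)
Your computation of the freedom count, your simplicity argument, and your main sparsity reduction all follow the paper's route essentially verbatim (the paper phrases the key step as $f(K)\geq f(K\cap H_i)$ via $f(K\cap A)-f(G_1\cap A\cap K)\geq 0$, which is your identity $f(K)-f(K^{*})=f(K_H)-6$ in different clothing), and your bookkeeping for the degenerate cases $|V(K_H)|\leq 2$ is correct. However, the one subcase you flag as "the main obstacle" --- a subgraph $K_{\circ}\subseteq G_2^{\circ}$ with $f(K_{\circ})=6$ meeting $H_i$ exactly at the two endpoints of an internal chord --- is precisely the only nontrivial point of the whole lemma, and you have not proved it: announcing a plan "in the topological spirit of Lemma~\ref{l:torusdichotomy}" is not an argument. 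This is a genuine gap, and it is the part of the proof that actually requires the machinery developed earlier in the section.

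The paper closes this case using the hole-filling lemma, Lemma~\ref{l:fillingin}, not Lemma~\ref{l:torusdichotomy}. Concretely: represent $G_1$ as an embedded graph on the torus with the critical cycle $c$ a simple closed curve, so that $K$ (your $K_{\circ}$ together with the chord) sits inside $c$ as an embedded triangulated planar graph attached to $c$ at the two vertices $x,y$. The two attachment points force the region inside $c$ to contain at least two disjoint open regions complementary to $K$, at most one of which is the hole of $G$. Applying Lemma~\ref{l:fillingin} to the $(3,6)$-tight graph $G_1\cup K$ shows every non-hole complementary region is bounded by a $3$-cycle. The boundary of such a region then contains a path from $y$ to $x$ along $K$ of length $1$ or $2$: if length $1$, that edge adjoined to $G_1$ gives a subgraph of $G$ with freedom count $5$; if length $2$, the complementary arc of $c$ from $x$ to $y$ has length $1$, and that edge adjoined to $K$ gives a subgraph of $G$ with freedom count $5$. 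Either way $(3,6)$-tightness of $G$ is violated, so the subcase cannot occur. Without some argument of this kind your proof is incomplete at its critical point.
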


\begin{proof}
Let $A$ be the (possibly degenerate) annulus graph $G_2^\circ$. Then the graph intersections $G_1\cap A$ and $H_i \cap A$ coincide. Also
we have
\[
6=f(G)=f(G_1\cup A) = f(G_1)+f(A)-f(G_1\cap A)=6+(f(A)-f(H_i\cap A))
\]
Thus $f(A)-f(H_i\cap A) = 0$ and so
\[
f(G_2) = f(H_i)+f(A)-f(H_i\cap A) = 6.
\]

It remains to show that $G_2$ is simple and  $(3,6)$-sparse.

If $G_2$ is not simple then there is A non boundary edge $e$ of $H_i$ with the same vertices as an edge $f$ of $A$. But then $G_2\cup\{f\}$ is a subgraph of $G$ with freedom count $5$ which is a contradiction.

To determine the sparsity of $G_2$ let $K$ be a subgraph of the graph $G_2=H_i\cup A$ with at least $3$ vertices. Then 
\[
f(G_1\cup (K\cap A)) = f(G_1) +f(K\cap A)-f(G_1\cap A\cap K)
\]
from which it follows that $f(K\cap A)-f(G_1\cap A\cap K)\geq 0$, since $G$ is $(3,6)$-sparse and $f(G_1)=6$. On the other hand
\[
G_1\cap A\cap K = H_i\cap A\cap K
\]
and so
\[
f(K)=f(K\cap H_i) + f(K \cap A) - f(H_i\cap A\cap K)\geq f(K\cap H_i)
\]
Thus $f(K)\geq 6$ (as desired) except possibly in the case that $K\cap H_i$
consists of a single edge. If this edge is an edge of the boundary cycle then $K$ is a subgraph of $G$ and it follows immediately that $f(K)\geq 6$. 
So the final case to consider is the case $K=K_1\cup e$ where $K_1$ is a $(3,6)$-tight subgraph of $A$ which meets $H_i$ at $2$ vertices, being the vertices of a nonboundary  edge $e$  of $H_i$. We use the hole filling lemma to show that this does not occur.

We may consider an embedded graph representation of $G_1$  in a topological torus represented by a rectangle with identified opposite edges.
Moreover we may assume that 
the critical cycle $c$ for $G_1$ is represented by a simple closed curve marked with vertices of $c$. It follows that
$K$ is represented as an embedded triangulated planar graph, possibly with some identified vertices or edges (implied by the detachment map for $G$). Since there are $2$ vertices of attachment, which we denote by $x$ and $y$, it follows that the interior region of the critical cycle $c$ contains at least two open disjoint regions complementary to $K$, one of which corresponds to the detachment map (or hole) of $G$. 
 (There may, a priori, be more than one non-hole region, as suggested by the regions labelled $A$ and $B$ in Figure
\ref{f:2vertexCaseOfK}.)

\begin{center}
\begin{figure}[ht]
\centering
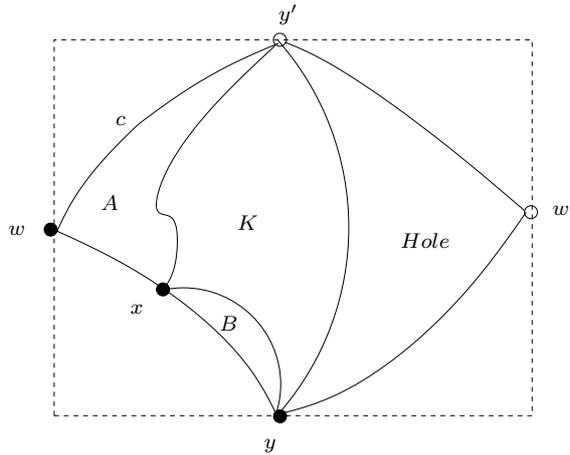
\caption{A torus embedded graph indication of a subgraph $K$ of $G$ with two vertices of attachment to $G_1$.}
\label{f:2vertexCaseOfK}
\end{figure}
\end{center}

By the hole filling lemma,  applied to the $(3,6)$-tight graph $G_1\cup K$, the non-hole regions are bounded by embedded $3$-cycles. These are either
facial $3$-cycles or nonfacial $3$-cycles which are triangulated by faces of $G$. We now obtain a contradiction in all cases. 

Consider a path from $y$ to $x$ along the boundary of $K$ for one of the non hole regions.
This has length $1$ or $2$. If it has length $1$ then adding this edge to $G_1$ gives a subgraph of $G$ with freedom count $5$ which is a contradiction.
Thus the length of the path is $2$ and so the length of the path along $c$ from $x$ to $y$ has length $1$. Adding this edge to $K$ gives a subgraph of $G$ with freedom count $5$ and so this again is a contradiction.
\end{proof}

We now deduce that there is a \emph{contraction fission sequence} for any graph $G\in\T$ as described in the next corollary.

Note that critical separating cycles include the improper case of the boundary cycle determined by the triple $(T,D,i)$ for $G$ and in this case
$G_2^\circ$ is equal to $\partial G$. 
Also we note that the proper critical separating cycle $c_1$ in Figure \ref{f:nonplanarannulus} does not provide a fission move with the property, which we call the \emph{reducing property}, that $G_2$ has a smaller vertex set than $G$. 
However, we now show that we can assume that there is a critical cycle for which reduction occurs. The corollary then follows from this fact and the key lemma.

Let $c$ be a proper critical separating cycle for $G\in \T$. Then 
$G_2^\circ$ contains a face of $G$ with an edge  $xy$ on $c$ and third vertex $z\in G_2^\circ$. It follows that both of the edges $xz$ and $zy$ do not lie on $c$ since adding an edge to $G_1$ provides a subgraph of $G$ with freedom number $5$. Thus there is an $FF$ edge in $G_2^\circ$ with both faces in $G_2^\circ$ . If contraction of this edge yields a graph in $\T$ then this may be used for reduction. So we may assume, by the key lemma, that there is a critical separating cycle $c_2$ though this $FF$ edge. We can also assume that this cycle lies in $G_2^\circ$ (by replacing some subpaths with corresponding subpaths of $c_1$ with the same initial and final vertices). It now follows that the division move for $c_2$ produces an annular graph with fewer faces than $G_2^\circ$. The resulting annular graph need not in fact have fewer vertices than $G$. Nevertheless, since the graph is finite,  the process can be repeated until a proper cycle is obtained with this property.

\begin{definition} A torus with hole graph is \emph{uncontractible} if every edge of type $FF$ lies on a nonfacial $3$-cycle. 
\end{definition}

\begin{cor}\label{c:tree}
\label{c:canconstruct} 
Let $G$ be a torus graph with a single hole in $\T$. 
Then there exists a finite rooted tree in which each node is labelled by an element of $\T$ such that,
\begin{enumerate}[(i)]
\item the root node is labelled $G$,
\item every node has either one child which is obtained from its parent node by an $FF$ edge contraction, or, two children which are obtained from their parent node by a fission move for a critical separating cycle, 
\item each leaf is an uncontractible graph.
\end{enumerate}
\end{cor}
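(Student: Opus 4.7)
The plan is to construct the tree by induction on $|V(G)|$. The base case is immediate: if $G$ is uncontractible, the desired tree consists of the single root node labelled $G$, which is simultaneously a leaf.

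For the inductive step, suppose $G\in\T$ is contractible, so some $FF$ edge $e$ of $G$ has the property that its contraction yields a simple graph $G'$. We apply Lemma~\ref{l:toruskey}. In the first case, $G'\in\T$, and since $|V(G')|=|V(G)|-1$, we may apply the induction hypothesis to $G'$ and attach the resulting tree as the unique child of the root node labelled $G$. In the second case, $e$ lies on a critical separating cycle, and the associated division produces a pair $\{G_1,G_2^\circ\}$ with $G_1\in\T$, to which we may apply Lemma~\ref{l:fissionmove} to obtain a fission move $G\to\{G_1,G_2\}$ with $G_1,G_2\in\T$. We take $G_1$ and $G_2$ to be the two children of the root and apply the induction hypothesis to each.

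The main obstacle is that the critical separating cycle supplied by Lemma~\ref{l:toruskey} need not yield a \emph{reducing} fission move, i.e.\ one in which both $G_1$ and $G_2$ have strictly fewer vertices than $G$ (compare the example in Figure~\ref{f:nonplanarannulus}). To handle this, I would adopt the procedure outlined in the paragraphs preceding the corollary. Given a proper critical separating cycle $c$, the annular piece $G_2^\circ$ must contain a face with an edge on $c$ and a third vertex off $c$; neither of the other two edges of this face can lie on $c$ (else adjoining the edge to $G_1$ would produce a subgraph of $G$ of freedom number $5$, contradicting sparsity). Hence $G_2^\circ$ contains an $FF$ edge $e'$ whose two incident faces lie in $G_2^\circ$. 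If the contraction of $e'$ keeps the resulting graph inside $\T$, we use contraction; otherwise, by Lemma~\ref{l:toruskey} applied inside the ambient graph, there is a critical separating cycle $c_2$ through $e'$, which (by replacing subpaths of $c_2$ lying outside $G_2^\circ$ with the corresponding subpaths of $c$ between the same endpoints) can be assumed to lie entirely inside $G_2^\circ$. The resulting fission strictly reduces the number of faces contained in the annular remainder, and since the total face count is finite, iterating produces, in finitely many steps, a fission move with the reducing property.

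Once a reducing fission move (or an $FF$ contraction landing in $\T$) is in hand, both children have strictly smaller vertex count than $G$, so the induction hypothesis applies and the resulting trees can be grafted onto $G$ to yield the desired finite rooted tree. The conditions (i), (ii), (iii) of the corollary are satisfied by construction.
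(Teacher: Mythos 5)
Your proposal is correct and follows essentially the same route as the paper: the paper derives the corollary from Lemma \ref{l:toruskey}, Lemma \ref{l:fissionmove} and the discussion immediately preceding the statement, which establishes the reducing property by the same face-count iteration inside $G_2^\circ$ that you describe. Your write-up merely makes explicit the induction on $|V(G)|$ (noting that a proper critical cycle forces $|V(G_1)|<|V(G)|$ by $(3,6)$-tightness) that the paper leaves implicit.
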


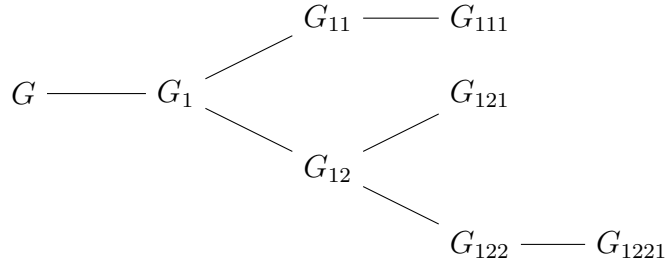
\begin{figure}[ht]
\centering
  \begin{tabular}{  c   }
\begin{tikzpicture}[level distance=2cm,
  level 1/.style={sibling distance=2cm},
  level 2/.style={sibling distance=2cm},
	level 3/.style={sibling distance=2cm},
	level 4/.style={sibling distance=2cm}]
  \node {$G$} [grow=right]
    child {node {$G_1$}
      child {node {$G_{12}$}
				child{node {$G_{122}$}
				child{node {$G_{1221}$}}
				}
				child{node {$G_{121}$}}
			}
      child {node {$G_{11}$}
				child{node {$G_{111}$}}
      }};
\end{tikzpicture}
\end{tabular}

 \caption{Contraction and fission to uncontractible graphs in $\T$.}
\label{ConstructionA}
\end{figure}

The inverse move for edge contraction is a vertex splitting move, which as we have noted, preserves $3$-rigidity. Also the inverse of a fission move (a fusion move) corresponds to substitution of the subgraph $H_i$ of $G_2$ by the graph $G_1$. It is immediate from the definition of infinitesimal rigidity that if $G_1$ and $G_2$ are $3$-rigid then so too is $G$.
A proof of the equivalence of (i) and (ii) in the main theorem can therefore be completed by showing that the uncontractible graphs in $\T$ are $3$-rigid.

\section{The rigidity of uncontractible graphs}

We now show that the uncontractible graphs of $\T$  are $3$-rigid. This completes our first proof of the equivalence of (i) and (ii) in the main theorem. 

This will be achieved in  two steps. The first step shows that uncontractible graphs $G$ in $\T$ have no \emph{interior vertex}. This means that each vertex of $G$ lies on $\partial G$, and so, in particular, $|V(G)| \leq 9$. This leads quickly to the fact that the uncontractible graphs with at most $8$ vertices are $3$-rigid. In the second step we show that the graphs with $9$ vertices and no interior vertices are contractible.

\begin{lemma}\label{l:degree6}
Let $G \in \T$ be an uncontractible graph. Then the interior vertices of $G$ have degree at least $6$.
\end{lemma}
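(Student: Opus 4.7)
The plan is to examine the local combinatorial structure at an interior vertex $v$ and rule out $\deg(v) \in \{3,4,5\}$ by a case analysis that combines uncontractibility with $(3,6)$-sparsity. Write $d = \deg(v)$. Since $v$ is interior, every edge at $v$ is of type $FF$ and the facial triangles at $v$ form a closed fan, so the neighbours $v_1, \dots, v_d$ of $v$ can be cyclically indexed modulo $d$ in such a way that each link edge $v_i v_{i+1}$ is present in $G$. Because $G$ is uncontractible, each edge $vv_i$ must lie on some non-facial $3$-cycle; its third vertex must be a common neighbour of $v$ and $v_i$ distinct from $v_{i-1}$ and $v_{i+1}$, hence of the form $v_j$ with $j \notin \{i-1, i, i+1\}$.

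If $d=3$, no such index $j$ exists, so $vv_1$ is contractible, a contradiction. If $d=4$, the only admissible $j$ for the edge $vv_1$ is $j=3$, forcing the diagonal $v_1v_3$ to be an edge of $G$; by symmetry $v_2v_4$ is also forced. The subgraph induced on $\{v,v_1,v_2,v_3,v_4\}$ then has $5$ vertices and at least $4+4+2=10$ edges (hub edges at $v$, the link $4$-cycle, and the two diagonals), so its freedom number is $15 - 10 = 5 < 6$, violating $(3,6)$-sparsity. If $d=5$, for each $i$ there must be a chord $v_iv_{i\pm 2}$, so the set of chords actually present in $G$ forms an edge cover of the pentagram, i.e.\ the $5$-cycle on $\{v_1,\dots,v_5\}$ whose edges are the potential chords $v_iv_{i+2}$. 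The minimum edge cover of a $5$-cycle has size $\lceil 5/2 \rceil = 3$, so the subgraph induced on $\{v,v_1,\dots,v_5\}$ has $6$ vertices and at least $5+5+3=13$ edges, again giving freedom $\leq 5 < 6$ and violating $(3,6)$-sparsity.

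The delicate point is verifying that the link of an interior vertex is genuinely a proper $d$-cycle with distinct vertices and distinct link edges, and that the forced chords or diagonals in the $d=4,5$ cases are genuinely new edges not already counted among the hub or link edges. Both facts follow from the simplicity of $G$ together with the condition in the definition of a surface graph that each $1$-simplex lies in at most two $2$-simplexes, which forces the triangle-adjacency graph at $v$ to be a single cycle of length $d$. Once this local model is in place the counting is routine, and no further topological input about the torus is required.
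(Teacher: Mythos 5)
Your proof is correct and follows essentially the same route as the paper: rule out degree $3$ because every $3$-cycle through a spoke is facial, and rule out degrees $4$ and $5$ by counting the edges forced into the induced subgraph on $v$ and its link and observing the freedom number drops below $6$. The one place you go beyond the paper is the degree-$5$ case, which the paper dismisses with ``one can similarly check''; your observation that the forced chords must form an edge cover of the pentagram $C_5$, hence number at least $3$, is exactly the detail needed to make that count close.
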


\begin{proof}
A vertex $v$ is an interior vertex if and only if all edges incident to $v$ are of $FF$ type. Since $G$ is $(3,6)$-tight it contains no vertices of degree $1$ or $2$. If $\deg(v)=3$ then it follows from the simplicity of $G$ that each of the three edges incident  to $v$ does not lie on a  non-facial $3$-cycle. This contradicts the uncontractibility of $G$.
If $v$ has degree $4$ then the induced subgraph $X(v)$ for $v$ and its $4$ neighbours has at least $10$ edges. These are the $8$ edges for the faces incident to $v$ and at least $2$ further edges
to fulfil the uncontractibility condition. Thus $f(X(v))\leq 5$ which is contrary to $G$ being $(3,6)$-tight.
One can similarly check that $f(X(v))<6$ if the degree of $v$ is $5$. 
\end{proof}

The proof of the next lemma exploits the topological nature of the torus.
For this and subsequent arguments it is convenient to define the {homology class} of an $FF$ edge in a torus with hole graph whose vertices lie on $\partial G$ and it is convenient to refer to such an edge as a \emph{crossover edge}.

\begin{definition} Let $G$ be a torus with hole graph with triple $(T,D,i)$, let $e$ be a crossover edge and let  $\tilde{e}$ be any cycle of edges formed by $e$ and edges from $\partial G$. The \emph{homology class} of $e$ is 
the unordered pair $\{[\tilde{e}],-[\tilde{e}]\}$ associated with
the homology class of $[\tilde{e}]$ in $H_1(T,\bZ)$. 
\end{definition}

Note that there can be no crossover edge  with trivial homology class. Such an edge would provide a chord of the $9$-cycle $i(\partial D)$ together with an associated facial triangulation with a subpath of the $9$-cycle. This would show that $G$ contains a subgraph which is a torus graph with a single hole where the hole boundary length is less than $9$ and this contradicts $(3,6)$-tightness.

We simply write $[\tilde{e}]$ for the homology class of $e$ and there will be no cause for confusion.

 

Figure \ref{f:smallgraph_again} indicates three sets of crossover edges with the same homology class in the case of the graph $H_1$.
 
\begin{center}
\begin{figure}[ht]
\centering
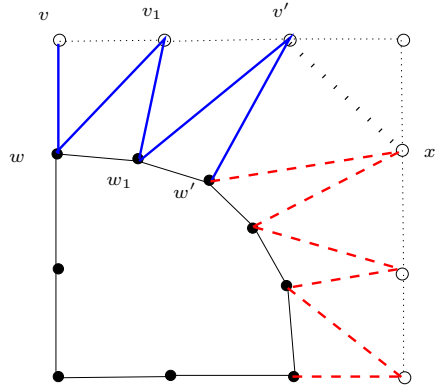
\caption{The $12$ nonboundary edges of the graph $H_1$ fall into $3$  homology classes.}
\label{f:smallgraph_again}
\end{figure}
\end{center}

The limited possibilities for the classes  
 $[\tilde{e}]$  become apparent on considering $G$ as an embedded graph on the topological torus. In the case of boundary type $9v$ the boundary graph edges determine a simple closed curve, $\gamma$ say. If there are no interior vertices then the curves for the remaining edges are disjoint except possibly at their endpoints on $\gamma$. In view of this disjointness 
it follows that there can be at most \emph{three} distinct homology classes for such edges.
Indeed, Figure \ref{f:terminalprooffirst} indicates  three embedded crossover edges (with homology classes $(1,0)$, $(0,1)$ and $(1,1)$, up to sign). Note that no further  homology class is possible for any additional embedded $FF$ edge. (The figure illustrates the embedding of a $9v$ type graph but in fact the argument is the same in general, when $\gamma$ may have points of self contact.)
\begin{center}
\begin{figure}[ht]
\centering
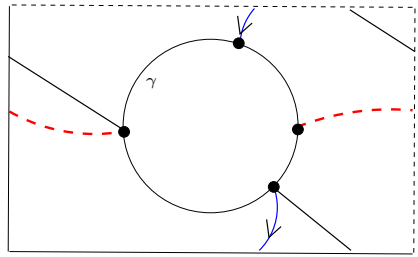
\caption{A representation of $3$ embedded edges of $FF$ type with different homology classes.}
\label{f:terminalprooffirst}
\end{figure}
\end{center}

\begin{lemma}\label{l:atmost9}
Let $G \in \T$ be an uncontractible  graph. Then $G$ has no interior vertices.
\end{lemma}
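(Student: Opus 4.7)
I argue by contradiction. Suppose $v$ is an interior vertex of $G$, so every edge at $v$ is of type $FF$. By Lemma \ref{l:degree6}, $d:=\deg(v)\geq 6$; label the cyclic link by $u_1,\ldots,u_d$. Uncontractibility forces each $vu_i$ to lie on a nonfacial $3$-cycle, and since $N(v)=\{u_1,\ldots,u_d\}$ this $3$-cycle must take the form $\{v,u_i,u_j\}$ with $u_iu_j\in E(G)$ and $|i-j|\not\equiv 1\pmod d$. Thus the link cycle admits a family of chords in which every link vertex is incident to at least one chord, so the induced subgraph $K$ on $\{v\}\cup N(v)$ satisfies $|V(K)|=d+1$ and $|E(K)|\geq 2d+\lceil d/2\rceil$. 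The $(3,6)$-sparsity of $G$ forces $|E(K)|\leq 3(d+1)-6$. For $d=6$ these two bounds coincide at $|E(K)|=15$, so $K$ is $(3,6)$-tight and the chords form a perfect matching of the hexagonal link with chord link-distances in $\{2,3\}$.

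Next I exploit the torus embedding of $K$ inherited from $G$. The star of $v$ is a closed topological disc bounded by the link cycle, contributing $d$ triangular faces to $K$. Euler's relation $V-E+F=0$ on the torus gives $F=|E(K)|-(d+1)$ total faces, so there are $|E(K)|-2d-1$ \emph{outside faces} whose combined boundary length (counted with multiplicity) equals $2|E(K)|-3d$. For $d=6$ there are exactly two outside faces of total length $12$. Since $v$ is interior, the hole of $G$ lies disjoint from the star of $v$ and therefore sits in the interior of exactly one outside face. For any hole-free outside face $F$, its closure is an embedded triangulated disc $H$ of $G$ whose boundary $\tau$ is a cycle in $K$ with $K\cap H=\tau$. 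Applying the hole-filling Lemma \ref{l:fillingin} to the $(3,6)$-tight subgraph $K$ forces $\tau$ to be a $3$-cycle. In particular, for $d=6$ the two outside faces of $K$ must have boundary lengths $3$ and $9$.

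The proof concludes with a combinatorial case analysis of the admissible chord matchings. For $d=6$ there are two classes up to rotation: the three diameters $\{u_1u_4,u_2u_5,u_3u_6\}$, in which the outside link-plus-chord graph (isomorphic to $K_{3,3}$) contains no $3$-cycle at all, immediately contradicting the hole-filling conclusion; and mixed matchings such as $\{u_1u_3,u_4u_6,u_2u_5\}$, for which I trace the toroidal face structure and show that the required $3+9$ face split, together with the $9$-cycle hole boundary inscribed in the larger outside face, produces a subgraph of $G$ with freedom count strictly less than $6$, again violating $(3,6)$-sparsity. The analogous argument handles each $d\geq 7$: the bound $|E(K)|\leq 3(d+1)-6$ combined with $|E(K)|\geq 2d+\lceil d/2\rceil$ constrains the outside face structure, and Lemma \ref{l:fillingin} forces all but at most one outside face to be $3$-cycles, which the cyclic structure of link chords cannot supply in the required numbers.

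The principal obstacle is this last combinatorial-topological step: systematically verifying across all chord configurations and all $d\geq 6$ that the triangular-outside-face count demanded by the hole-filling lemma cannot be met without producing a subgraph of $G$ whose freedom number drops below $6$.
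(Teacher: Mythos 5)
Your opening matches the paper's: by Lemma \ref{l:degree6} the interior vertex has degree $d\geq 6$, uncontractibility forces each spoke to lie on a nonfacial $3$-cycle and hence forces a chord of the link at every link vertex, and for $d=6$ the sparsity count pins the induced star subgraph down to exactly three chords forming a matching. From there, however, your argument has real gaps. First, your Euler-characteristic count of ``outside faces'' presupposes that the embedding of $K$ in the torus is cellular; this is not automatic and is precisely what the paper's analysis of the homology classes of the three chordal $3$-cycles establishes (all three classes coinciding, or a chordal cycle being null-homotopic, must be excluded using simplicity and disjointness of the embedded chords). Second, and more seriously, the decisive step for $d=6$ --- ruling out every admissible matching --- is only asserted: you name one configuration ($K_{3,3}$), and for the ``mixed matchings'' you say you ``show'' a freedom-count violation without exhibiting it; you yourself flag this as the principal obstacle. (It can in fact be closed: a triangular outside face would force its middle link vertex to have degree $3$ and hence no chord, so no outside face is a $3$-cycle, while Lemma \ref{l:fillingin} forces the hole-free one to be a $3$-cycle --- but that argument is not in your text.) Third, your treatment of $d\geq 7$ is a wave of the hand; for $d\geq 8$ the bounds $2d+\lceil d/2\rceil\leq |E(K)|\leq 3d-3$ are no longer tight, so neither the chord set nor the outside face structure is determined and ``the analogous argument'' does not obviously apply.

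For contrast, the paper avoids all three difficulties at once: it classifies the possible homology-class triples of the chordal $3$-cycles, reads off that every complementary region of the embedded star-plus-chords graph is bounded by $4$, $6$ or $8$ edges, and then applies Lemma \ref{l:boudaryis9cycle} to the region \emph{not} containing the hole to get a torus-with-hole subgraph of $G$ with freedom number strictly below $6$ --- no hole-filling lemma and no matching case analysis needed. Higher degrees are then handled by a short induction: a degree-$(d{+}1)$ configuration arises from a degree-$d$ one by inserting a link vertex, whose forced chord merely subdivides an existing region without lengthening any boundary, so the same contradiction persists. You would need to supply both the cellularity/homology step and this reduction to make your route complete.
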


\begin{proof}Let $(T,D,i)$ be a triple associated with $G$.
Let $v_1,v_2, \dots , v_r$ be the neighbours of an internal vertex $z$ written in order, so that $zv_1v_2, zv_2v_3, \dots $ are facial $3$-cycles of $G$.
By the uncontractibility of $G$ for each vertex  $v_i$ the edge $zv_i$ lies on a non-facial $3$ cycle and so there is an additional edge $v_iv_j$ for some $j\neq i-1, i+1$. 

Suppose first that the degree of $z$ is $6$. Then the subgraph 
$X(z)$ induced by $z$ and its neighbours includes the $6$ edges incident to $z$, the $6$ perimeter edges $v_iv_{i+1}$ and  additional edges between non adjacent perimeter vertices  $v_1, \dots , v_6$.
There are at least $3$ such edges, and since $f(X(z))\geq 6$ it follows that there are exactly $3$, say $e_1, e_2, e_3$.
Let $\tilde{e}_1, \tilde{e}_2, \tilde{e}_3$ be choices of $3$ non-facial $3$-cycles for the edges $e_1, e_2, e_3$ and let $[\tilde{e}_1], [\tilde{e}_2], [\tilde{e}_3]$ be the associated homology classes in
$H_1(T, \bZ)$. 

Figures \ref{f:interiorontorus}, \ref{f:interiorontorusB}  show 
two examples of such a graph $X(v)$ embedded on a topological torus
$S^1 \times S^1$. 
For an appropriate identification of $H_1(T,\bZ)$ with $\bZ^2$ the homology classes $[\tilde{e}_1], [\tilde{e}_2], [\tilde{e}_3]$ in these examples 
are
\[
(1,0),(1,0),(0,1) \quad \mbox{ and } \quad (1,0), (0,1),(1,1). 
\] 
\begin{center}
\begin{figure}[ht]
\centering
\includegraphics[width=7cm]{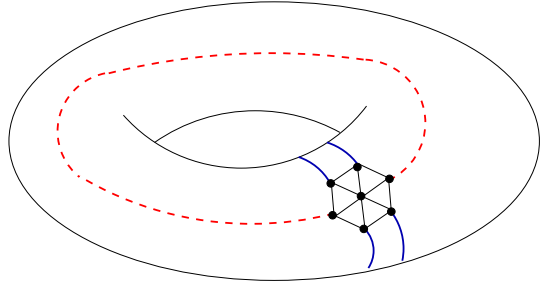}
\caption{An embedding of a subgraph $X(z)$.}
\label{f:interiorontorus}
\end{figure}
\end{center}

\begin{center}
\begin{figure}[ht]
\centering
\includegraphics[width=7cm]{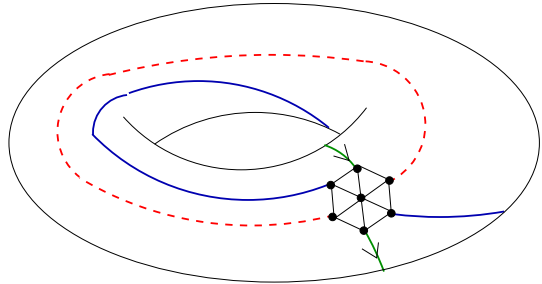}
\caption{An embedding of subgraph $X(z)$.}
\label{f:interiorontorusB}
\end{figure}
\end{center}  

Consider the maximal connected open subsets  $R$ of  $S^1 \times S^1$ that are complementary to $X(z)$ as an embedded graph. We refer to these relatively open sets as \emph{regions}. Examining rectangular representations of the embedded graph we see that in the first example 
there are two non-facial regions, one of which is bounded by a $4$-cycle of embedded edges and one of which is bounded by an $8$-cycle, and there are  $6$ regions for the facial $3$-cycles of $X(v)$.
This also holds for the case of a homology class triple $(1,0), (1,1), (1,1)$.

In the example of Figure \ref{f:interiorontorusB}, with homology triple
$(1,0), (0,1), (1,1)$  there are two non-facial regions each of which is bounded by an embedded $6$-cycle. Thus in both cases that the nonfacial regions are bounded by
$4, 6$ or $8$ edges.

The remaining case in which the homology classes $[\tilde{e}_1], [\tilde{e}_2], [\tilde{e}_3]$ coincide cannot occur since $G$ is a simple graph without loops and the embedded edges for $e_1,e_2, e_3$ are disjoint except at their endpoints.

Consider now the embedded graph for the containing torus graph $T$ for $G$. Note that each of the non-facial regions $R$ defines a subgraph $G_R$ of $T$ which is a torus graph with a single hole whose boundary corresponds to the boundary of the region. In view of the observations on boundary lengths, each graph $G_R$ has freedom number $f(G_R)<6$. 
However, one of these graphs is a subgraph of $G$, and this contradicts the $(3,6)$-sparsity count for $G$. So we conclude that $G$ has no interior vertices of degree $6$.

Suppose now that $\deg(z)=7$. We may view $X(z)$ in this case as arising from the  $\deg(z)=6$ case by the addition of a new vertex $v_*$ between $v_i$ and $v_{i+1}$, with  the edge $v_iv_{i+1}$ replaced by two edges
$v_iv_*, v_*v_{i+1}$ and with the edge $zv_*$ added. 
Moreover an embedded graph in the degree $6$ case can be augmented in a corresponding way to provide the embedded graph for the degree $7$ case.
By the uncontractibility of $G$ there is an edge $v_*v_j$ for some $j$ which is embedded and we see that its embedding divides an $r$-cycle region (for the degree $6$ case)  into two regions. Moreover it follows that the boundary cycles for these regions do not use more edges. Thus we obtain a contradiction as before. By induction the same conclusion holds in general and so $G$ has no interior vertices of degree $n$ for all $n\geq 6$. In view of the previous lemma there can be no interior vertices of any degree.
\end{proof}

Recall  that the double banana graph, $G_{DB}$ say, is formed by joining two copies of the (single banana) graph $K_5\backslash e$ at their degree $3$ vertices. Evidently this graph is $(3,6)$-tight and fails to be $3$-rigid
and it is well known that this is the only graph with $8$ or fewer vertices with this property. The next lemma combined with the previous lemma shows that the uncontractible graphs with $8$ or fewer vertices are $3$-rigid.

\begin{lemma}\label{l:lessthannine}
Let $G\in \T$ be a  graph in $\T$ with no interior vertices and  no more than $8$ vertices. Then $G$ is $3$-rigid.
\end{lemma}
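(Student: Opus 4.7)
The plan is to reduce to the standard classification of generic $3$-rigidity for small graphs: every $(3,6)$-tight simple graph on at most $8$ vertices is generically $3$-rigid with the single exception of the double banana $G_{DB}$, obtained by identifying two copies of $K_5\setminus e$ along their pair of degree-$3$ vertices. Consequently, to prove the lemma it suffices to show $G\neq G_{DB}$. Since $|V(G_{DB})|=8$, this verification is needed only when $|V(G)|=8$.

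The hypothesis $V(G)=V(\partial G)$ combined with Lemma \ref{l:smallgraphs} forces the detachment type of $G$ to be one of those for which $|V(\partial G)|\leq 8$. For the types with $|V(\partial G)|\leq 7$ the graph $G$ has too few vertices to be $G_{DB}$; the only types that permit $|V(G)|=8$ are $v3v6$ (corresponding to $H_2$) and $v4v5$ (corresponding to $H_3$). For each of these I would argue by contradiction, assuming $G=G_{DB}$ of that type.

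The key structural observation is that the two cut vertices $u,v$ of $G_{DB}$ have degree $6$ and their common neighbourhood $\{a_1,a_2,a_3,b_1,b_2,b_3\}$ splits into two disjoint triangles with \emph{no} edges between the $a$-group and the $b$-group. In the embedded torus-with-hole representation of $G$, the facial $3$-cycles incident to any vertex $w$ assemble around $w$ into angular wedges (one per appearance of $w$ on the boundary $9$-cycle), and within each wedge the faces determine a \emph{link path} in the neighbourhood graph of $w$: a sequence of neighbours of $w$ joined consecutively by edges of $G$. Each link path at $w\in\{u,v\}$ must therefore stay within a single banana-triangle.

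The main step is a short case analysis. For each of the two detachment types, and for each identification of the repeated vertex $v_1$ with either a cut vertex (degree $6$) or a non-cut vertex (degree $4$) of $G_{DB}$, the bipartite structure of $N(u)$ and $N(v)$ forces either a link-path edge or an edge of the boundary $9$-cycle to run between the $a$-group and the $b$-group, contradicting the absence of such edges in $G_{DB}$. The principal obstacle is the book-keeping across the two types and the handful of possible identifications for $v_1$; however, once the bipartite obstruction is isolated each sub-case resolves uniformly, yielding $G\neq G_{DB}$ and hence the $3$-rigidity of $G$. An alternative route would be to prove $3$-connectedness of every graph in $\T$ directly by the embedded-graph topological argument alluded to in the remark following Lemma \ref{l:smallgraphs}, which also excludes $G_{DB}$ since it has the $2$-vertex cut $\{u,v\}$.
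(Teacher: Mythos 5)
Your proposal is correct in outline and shares the paper's key reduction: both arguments invoke the classification of $(3,6)$-tight graphs on at most $8$ vertices to reduce the lemma to showing $G\neq G_{DB}$, and both observe that only the boundary types $v3v6$ and $v4v5$ can realise $8$ vertices. Where you diverge is in how $G_{DB}$ is excluded. The paper does this purely topologically: it asserts that any torus-with-hole graph whose detachment map has one of these two forms is $3$-connected (via the embedded representation in $R/\sim$ with the hole in the interior of $R$), and $G_{DB}$ has the $2$-cut $\{u,v\}$ --- this is exactly the ``alternative route'' you mention at the end. Your primary route instead analyses the putative facial structure directly, using the fact that $N(u)=N(v)$ splits into two triangles with no crossing edges and that $uv\notin E(G_{DB})$, so that every link path at $u$ or $v$ must stay inside one triangle. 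This is workable, but one caution: your claim that the bipartite obstruction already shows up in ``an edge of the boundary $9$-cycle'' is not always available --- for instance the closed walk $u,a_1,a_2,u,a_3,v,b_1,b_2,b_3$ is a legitimate $v3v6$ boundary walk in $G_{DB}$ using no $a$--$b$ edge --- so the contradiction must in such cases come from the link paths (there one of the two fans at $u$ necessarily joins an $a$-vertex to a $b$-vertex through $N(u)$, which is impossible). With that understood, your case analysis closes, and it arguably buys something the paper leaves implicit, since the paper's $3$-connectedness assertion is itself stated without proof; on the other hand the paper's topological argument is shorter and avoids the book-keeping over identifications of the repeated vertex entirely.
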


\begin{proof} It suffices to show that if $G$ has $8$ vertices then it is not equal to  $G_{DB}$.  Since $G$ and $\partial G$ have the same vertex set  it follows that the boundary graph is of type $v3v6$ or $v4v5$.

Considering an embedded graph representation of $G$ in a torus $R/\sim$, with the  detached disc represented by an open subset of $R$, it follows that in fact any torus with hole graph with this form of boundary is $3$-connected.  Since the double banana graph is not $3$-connected the proof is complete.
\end{proof}

We now embark on showing the remaining case (step 2) that the graphs $G$ in $\T$ with boundary type $9v$ and no interior vertices are contractible.
It is possible to give a rather extended ad hoc embedded graph argument to show this, and in fact this method is employed in Section 6 for a range of small graph types.
However we now give a more sophisticated proof, exploiting the homology classes of edges, which provides  some general methods and other insights.

The main idea  may be illustrated by considering again the $9$ vertex graph $H_1$, labelled as in Figure \ref{f:smallgraph_again}. 
The edges $vw$ and $v'w'$ are $FF$ edges of the same {homology class}, as are the $3$  "intermediate $v$ to $w$ edges". We note that the "interior edge" $v_1w_1$ (in contrast to $v'w_1$) does not lie on a nonfacial $3$ cycle. Also it can be shown that it does not lie on a critical separating cycle, and so the graph is reducible in $\T$ by edge contraction. In general we will identify such edges within subgraphs (called panel subgraphs) determined by crossover edges of the same homoloy class.

First we note two general lemmas that ensure contractibility. (These lemmas also play a role in  the ad hoc arguments in Section 6.)  

In the next proof we refer to an edge $e$ of $G$ as being \emph{critical} if it lies on a critical separating cycle.

\begin{lemma}\label{l:degree3notcrit}
Let $G$ be a graph of $\T$ with a degree $3$ vertex on the boundary graph $\partial G$ which is incident to an $FF$ edge $e$. Then the graph obtained from the contraction of $e$ is in $\T$.
\end{lemma}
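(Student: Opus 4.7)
The plan is to invoke Lemma \ref{l:toruskey}: it suffices to show that $e$ does not lie on any critical separating cycle, in which case the contraction of $e$ automatically yields a graph in $\T$. That $e$ is contractible is immediate, since $v$ has no neighbours beyond $w$, $x$, $y$, so there is no room for a non-facial $3$-cycle through $e$.

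First I would analyse the local structure at $v$. Writing $e = vw$ and letting $x$, $y$ be the other two neighbours of $v$, the two facial triangles containing $e$ must be $vwx$ and $vwy$ (there is nothing else at $v$ they could use). If $vxy$ were also a face then all three edges at $v$ would be of type $FF$ and $v$ would be an interior vertex, contradicting $v \in V(\partial G)$. So $vxy$ is not a face, both $vx$ and $vy$ are boundary edges, and the cyclic rotation of edges at $v$ is $vx, vw, vy$, with the hole of $G$ occupying the ``outer'' wedge between $vy$ and $vx$ (the one not meeting $vw$).

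Now I would assume for contradiction that a critical separating cycle $c$ passes through $e$, with associated division $G \to \{G_1, G_2^\circ\}$ where $c = i_1(\partial D_1)$ for an enlarged disc $D \subseteq D_1$ and $G_1 \in \T$. The cycle $c$ must leave $v$ via $vx$ or $vy$, and by the symmetry between these two edges, one may assume $c$ uses $vw$ and $vx$ at $v$. The key topological observation is that $D_1$, being a disc containing the old hole, must at $v$ sit on the side of $c$ which contains that hole. Given the rotation $vx, vw, vy$ and the location of the hole, this side also contains the face $vwy$ and the edge $vy$. Hence $vy$ is an interior edge of $i_1(D_1)$ and is deleted when passing to $G_1$, leaving $v$ with only the two $c$-edges $vw$ and $vx$.

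This contradicts the $(3,6)$-tightness of $G_1$: every graph in $\T$ has at least $4$ vertices, so $G_1 \setminus v$ is a subgraph on at least $3$ vertices with freedom number $f(G_1 \setminus v) \leq f(G_1) - 3 + 2 = 5$, violating sparsity. The main step requiring care is the topological claim that $vy$ really lies in the interior of $i_1(D_1)$, which must be verified regardless of whether $y$ itself is on $\partial D_1$ or strictly inside and regardless of how many times the detachment map $\alpha_1$ visits $v$; an inspection of the two faces of $T$ incident to $vy$ (one inside the old hole $D$, the other being $vwy$, which lies on the $D_1$ side of $c$ at $v$) shows that both faces lie in $D_1$, so $vy$ is interior to $D_1$. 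The remainder is a short appeal to the key lemma.
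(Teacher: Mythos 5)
Your overall strategy is sound and genuinely different from the paper's: you rule out a critical separating cycle through $e$ by showing that the division it induces would leave $v$ with degree $2$ in the $(3,6)$-tight graph $G_1$, whence $f(G_1\setminus v)=5$ violates sparsity. The paper instead keeps $v$ and shortcuts the corner of the cycle at $v$: if $c$ uses $vw$ and $vv_1$ consecutively, replacing these two edges by $v_1w$ (across the facial $3$-cycle $vv_1w$) produces a torus-with-hole subgraph of $G$ whose hole boundary has length $8$, hence freedom number $5$ by Lemma \ref{l:boudaryis9cycle}. Both are legitimate sparsity contradictions; in the generic case your version is if anything cleaner, since once you know $vy\notin c$ you do not even need the rotation argument: one of the two faces of $T$ at $vy$ lies in $i(D)\subseteq i_1(D_1)$, so $vy$ is the image of a $1$-simplex of $D_1$, and if that $1$-simplex were a boundary simplex then $vy$ would lie on $c$; hence it is interior and $vy$ is deleted.

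The gap is in the degenerate case you explicitly claim to cover, namely when the boundary word of $G_1$ visits $v$ more than once. Since $\deg_G(v)=3$, two visits consume four edge-slots among three edges, so some edge at $v$ is traversed twice by $c$ --- exactly the situation the paper's catalogue allows (e.g.\ the $e3e4$, $v1e1w2v1e1w1$, $e1f2e1f1$ types, where an endpoint of a repeated edge has only three distinct incident boundary edges). In such a configuration all three of $vw,vx,vy$ can be edges of $c$, and then your key inference fails: the fact that both faces of $T$ at $vy$ lie in $i_1(D_1)$ does \emph{not} force $vy$ to be the image of an interior $1$-simplex of $D_1$ --- it can equally be the image of two distinct \emph{boundary} $1$-simplexes (one in each of the two $2$-simplexes mapping to those faces), in which case $vy$ survives into $G_1$ as a doubly-traversed boundary edge and $v$ keeps degree $3$, so the degree-$2$ contradiction evaporates. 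These sub-cases can still be killed (e.g.\ by the paper's ear-removal shortcut: a $2$-simplex of $D_1$ mapping to $vwx$ with two of its edges on $\partial D_1$ must have its third edge interior, and excising it yields an $8$-cycle hole boundary and a subgraph of freedom number $5$), but your proof as written does not do this, and the one-line justification offered for the multi-visit case is the precise point at which it breaks.
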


\begin{proof}
Let $v$ be a such a degree $3$ vertex, with $e=vw$ an $FF$ edge, and let $vv_1, vv_2$ be the other edges incident to $v$. Note that both $vv_1$ and $vv_2$ must be edges of $\partial G$. 
Suppose, by way of contradiction, that $vw$ lies on a critical separating cycle $c$. Relabelling $v_1$ and $v_2$ if necessary, we may assume that the edge $v_1v$ also lies on $c$. (See Figure \ref{f:noncritedge}.) 
\begin{center}
\begin{figure}[ht]
\centering
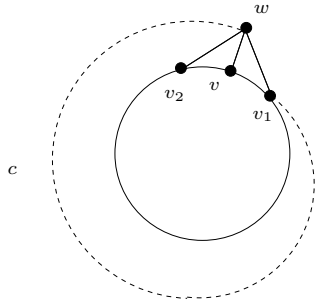
\caption{The noncriticality of the edge $vw$.}
\label{f:noncritedge}
\end{figure}
\end{center}But in this case the  $(3,6)$-tight subgraph $G_1$ determined by $c$ contains a torus with hole graph with boundary cycle of length $8$ obtained by replacing the two edges $v_1v, vw$ in $c$ by $v_1w$, and this is a contradiction.

Note also that $vw$ does not lie on a nonfacial $3$-cycle, since $G$ is a simple graph, and so the key lemma shows that the contraction of $e$ yields a graph in $\T$.
\end{proof}

\begin{lemma}\label{l:notOnCritOrDegree3exists}
Let $G\in \T$ with $V(G)=V(\partial G)$ and let $c$ be a critical cycle of edges which is not the detachment map cycle for $G$. Then there is an edge contraction of $G$ to a graph in $\T$.
\end{lemma}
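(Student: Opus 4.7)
The plan is to exhibit an $FF$ edge $e$ in the annular region $G_2^\circ$ (between $\partial G$ and $c$) whose contraction yields a graph in $\T$, and then invoke the key lemma (Lemma~\ref{l:toruskey}).

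First, I would locate a candidate $FF$ edge. Let $G \to \{G_1, G_2^\circ\}$ be the division move associated with $c$; since $c$ is distinct from the detachment cycle, $G_1 \in \T$ is strictly smaller than $G$ and $G_2^\circ$ is a nontrivial annular region with inner boundary $\partial G$ and outer boundary $c$. Following the argument just before Corollary~\ref{c:canconstruct}, $G_2^\circ$ contains a face $xyz$ of $G$ with $xy \in c$ and $z \in V(G_2^\circ)$; the edges $xz$ and $yz$ are of type $FF$ and do not lie on $c$, for otherwise adding one of them as a chord to $G_1$ would yield a subgraph of freedom count $5$, contradicting $(3,6)$-tightness of $G_1$. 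Thus $e = xz$ is an $FF$ edge with both adjacent faces in $G_2^\circ$, and by $V(G) = V(\partial G)$ we have $z \in V(\partial G)$.

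Second, I would choose $c$ to minimise the face count of $G_2^\circ$ among all critical cycles distinct from the detachment cycle, and then show that the candidate $e$ from the first step contracts to a graph in $\T$. Suppose for contradiction this fails. By the key lemma, either (i) $e$ lies on a nonfacial $3$-cycle $xzw$, or (ii) $e$ lies on a further critical cycle $c_e$, with associated $(3,6)$-tight subgraph $G_1^{(e)}$. In case~(ii), inclusion--exclusion gives $f(G_1 \cup G_1^{(e)}) + f(G_1 \cap G_1^{(e)}) = 12$, and $(3,6)$-sparsity forces both $G_1 \cup G_1^{(e)}$ and $G_1 \cap G_1^{(e)}$ to be $(3,6)$-tight. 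Combining this with the hole-filling lemma (Lemma~\ref{l:fillingin}), one extracts a new critical cycle $c^\ast$ bounding a disc strictly inside $D_c$, whose associated annular region has strictly fewer faces than $G_2^\circ$, contradicting minimality. In case~(i), the hypothesis $V(G) = V(\partial G)$ forces $w \in V(\partial G)$, and adjoining the triangle $xzw$ to $G_1$ and applying Lemma~\ref{l:fillingin} produces a similar reduction, again contradicting minimality.

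The main obstacle is the topological--combinatorial analysis in case~(ii): the cycles $c$ and $c_e$ on the torus need not be nested, so the discs $D_c$ and $D_{c_e}$ can interact in complicated ways. Producing a critical cycle strictly inside $D_c$ requires careful dissection of $c_e$ into its part inside $D_c$ and its part outside, followed by a cut-and-paste argument combined with the hole-filling lemma to ensure the resulting cycle is a properly embedded disc boundary containing $D$, with a $(3,6)$-tight exterior subgraph. The nonfacial $3$-cycle case~(i) is parallel but simpler, relying on the constraint that $w$ must lie on $\partial G$ to locate a smaller critical cycle through the triangle $xzw$.
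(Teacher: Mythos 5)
Your proposal takes a genuinely different route from the paper, and it has real gaps at exactly the points you flag as ``obstacles.'' The paper's proof is far more elementary and uses the hypothesis $V(G)=V(\partial G)$ in an essential way: since the detachment cycle $\pi$ and the critical cycle $c$ are both $9$-cycles, they cobound a facially triangulated annulus with no interior vertices, and this annulus decomposes into \emph{lozenges}, each bounded by a subpath $\pi_1$ of $\pi$ and a subpath $c_1$ of $c$ sharing their endpoints and having equal length $r$. Such a lozenge is a triangulated disc with $2r-2$ faces and $2(r-1)$ intermediate boundary vertices, and a face count forces some intermediate vertex of $\pi_1$ (hence a vertex of $\partial G$) to have degree $3$. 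Lemma \ref{l:degree3notcrit} then supplies the required contraction directly. No minimality induction, no appeal to Lemma \ref{l:toruskey}, and no treatment of nonfacial $3$-cycles is needed.

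Your argument, which is closer in spirit to the paper's proof of Lemma \ref{l:contractionkey}, hinges on two claims that are asserted rather than proved. First, in your case (i) the edge $e=xz$ may lie on a nonfacial $3$-cycle, in which case $e$ is not contractible and Lemma \ref{l:toruskey} does not apply to it at all; ``adjoining the triangle $xzw$ to $G_1$ and applying Lemma \ref{l:fillingin}'' is not a proof, since that lemma requires a triangulated-disc subgraph $H$ meeting a $(3,6)$-tight $K$ exactly in $\partial H$, and a bare nonfacial $3$-cycle provides no such configuration. (The paper does confront this difficulty, but only in Lemma \ref{l:contractionkey}, where a separate argument shows the edge can be rechosen.) Second, in case (ii) the step ``one extracts a new critical cycle $c^\ast$ bounding a disc strictly inside $D_c$'' is precisely the content requiring proof: $(3,6)$-tightness of $G_1\cup G_1^{(e)}$ does not by itself show that its complement is the image of a triangulated disc containing $D$, i.e.\ that it arises from a separating cycle; and even granting that, the extracted cycle could be the detachment cycle $\pi$ itself, which you exclude from the minimisation, so no contradiction results. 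The fact that your argument never uses $V(G)=V(\partial G)$ in an essential way, while the lemma is stated only under that hypothesis, is a further sign that the intended proof is the elementary counting argument rather than the reduction scheme you sketch.
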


\begin{proof}
Let $\pi$ be the (possibly improper) $9$-cycle for the detachment map of $G$. The $9$-cycles $c$ and $\pi$ form the boundary of a (possibly degenerate) facially triangulated annular subgraph $A$ of $G$ and we assume that they have the same orientation. Let $x$ be a vertex such that the edges $e=xy, f=xw$ are  edges of $\pi$ and $c$ which start at $x$, with $e \neq f$, and such that there are subpaths $\pi_1$, $c_1$ of the critical cycles from $x$ to a common vertex $z$. We may also assume that $z$ is the first such vertex. Thus the subpaths  form the boundary of either a facially triangulated disc, if $z \neq x$, or, if $z=x$, a triangulated disc with two boundary vertices identified. We denote this subgraph of $A$ as $A_1$.
See Figure \ref{f:panel_lozenge}.

The subpaths $\pi_1, c_1$ are of the same lengths, say $r$, since $\pi$ and $c$ are critical separating cycles, and the triangulation is formed by the addition of edges only. It follows by elementary graph theory   that there is a degree $3$ vertex $u$ strictly between $x$ and $z$ on the subpath $\pi_1$.
Indeed, the graph $A_1$ has exactly $2r-2$ bounded faces  from the triangulation by facial $3$-cycles of $G$. If the degrees of the intermediate vertices $u$ are greater than $3$ then there would be at least $2(r -1)+1$ faces incident to these vertices. 

Since the vertex $u$ is incident to an $FF$ edge the previous lemma applies to complete the proof.
\end{proof}
 
\begin{center}
\begin{figure}[ht]
\centering
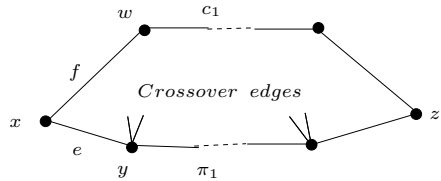
\caption{Subpaths of $c$ and $\pi$ from $x$ to $z$.}
\label{f:panel_lozenge}
\end{figure}
\end{center}

We now define \emph{panel subgraphs} of a graph $G\in \T$ with $|V(G)|=|V(\partial G)|=9$, and show that their strongly interior edges, should they exist, allow contraction to a graph in $\T$.

Consider  two distinct crossover edges $e=vw, e'=v'w'$ which have the same homology class. They determine a triangulated disc subgraph of $G$, and its containing torus graph $T$, which may be visualised as a planar \emph{triangulated panel} of $G$. The  vertex set consists of the vertices of $e, e'$, of which there are $3$ or $4$ in number,
and  vertices on the hole boundary lying on two paths, between $v$ and $v'$ and between  $w$ and $w'$.  Figure \ref{f:toruspanel} indicates such a panel subgraph with $5$ faces. 
\begin{center}
\begin{figure}[ht]
\centering
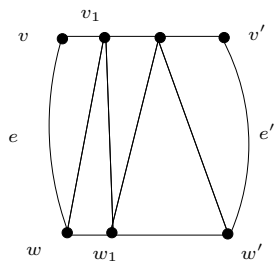
\caption{A panel subgraph of $G$ determined by two crossover  edges $e, e'$ of the same homology class.}
\label{f:toruspanel}
\end{figure}
\end{center}
Considering embedded graphs in the torus it follows readily that every panel graph is contained in a unique maximal panel subgraph.

Note that the \emph{interior vertices  of the panel}, that is, those without incidence with $e$ or $e'$, are only incident to edges of the panel subgraph. It follows that  any \emph{strongly interior edge} $v_1w_1$  of such a panel, that is, one with both $v_1$ and $w_1$ interior vertices, does not lie on a nonfacial $3$-cycle.

Note that Lemma \ref{l:notOnCritOrDegree3exists} implies that if $G$ is not contractible to a graph in $G$ then there can be no
critical separating cycles. Thus, in view of the previous paragraph, there can be no strongly interior edges of the panel if $G$ is not contractible to a graph in$\T$.

\begin{lemma}\label{l:9vhas_stronglyinterior}
Let $G$ be a  graph in $\T$ with $|V(G)|=|V(\partial G)|=9$. Then $G$ may be contracted to a graph in $\T$.
\end{lemma}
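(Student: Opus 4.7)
The plan is to argue by contradiction. Suppose $G\in\T$ with $|V(G)|=|V(\partial G)|=9$ admits no edge contraction yielding a graph in $\T$. By Lemma~\ref{l:notOnCritOrDegree3exists}, $G$ has no critical separating cycle distinct from its detachment cycle, and by Lemma~\ref{l:degree3notcrit} no vertex has degree $3$ (every vertex of $G$ lies on $\partial G$, so any degree-$3$ vertex would be incident to an $FF$ edge and Lemma~\ref{l:degree3notcrit} would apply). The key lemma, Lemma~\ref{l:toruskey}, then forces every $FF$ edge of $G$ to lie on a non-facial $3$-cycle, since otherwise its contraction would yield a graph in $\T$. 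Because a strongly interior edge of a maximal panel subgraph does not lie on any non-facial $3$-cycle, as noted immediately before the statement, the outcome is that \emph{no maximal panel of $G$ contains a strongly interior edge}; the remainder of the plan is to derive a contradiction from this.

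A direct count gives $|E(G)|=3\cdot 9-6=21$, so $G$ has $12$ crossover edges, and by the embedded-graph argument preceding the statement these distribute into at most three homology classes. Pigeonhole therefore yields some class $h$ with $k\ge 4$ crossover edges, forming a single maximal panel $P$ whose top and bottom boundary-path lengths $a,b$ satisfy $a+b=k-1$. Simplicity of $G$ forces each of the two complementary $\partial G$-arcs $L_1,L_2$ outside $P$ to have length at least $2$, for otherwise an extreme crossover edge $v_0w_0$ or $v_aw_b$ would duplicate a $\partial G$-edge; hence $|L_1|+|L_2|=10-k\ge 4$ and $k\le 6$. Moreover, any panel chord within the top or bottom path would have trivial homology and so cannot be an $FF$ edge, so the $k-2$ interior edges of $P$ are all top-to-bottom crossovers of class $h$.

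The hypothesis that $P$ contains no strongly interior edge then forces each interior crossover to be incident to a corner of $P$. When $\min(a,b)\ge 2$ a direct inspection pins the triangulation down to a double corner fan: a single corner-to-corner diagonal $v_0w_b$ together with a $v_0$-fan onto the interior bottom vertices and a $w_b$-fan onto the interior top vertices. In this configuration every interior boundary vertex of $P$ has panel-degree exactly $3$, so since $G$ has no degree-$3$ vertex, each such vertex must carry at least one further crossover edge belonging to a different homology class. When $\min(a,b)\le 1$, $P$ is a narrow or fan-type panel and the analogous analysis concentrates additional crossover edges at the corners of its thin side.

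The main obstacle I anticipate is the combinatorial case analysis over the admissible multisets $(k_1,k_2,k_3)$ of class sizes summing to $12$ with each $k_i\le 6$, that is, $(6,6,0),(6,5,1),(6,4,2),(6,3,3),(5,5,2),(5,4,3),(4,4,4)$ up to symmetry. For each distribution I would fix a maximal panel $P$ realising the largest class, use the identity $|L_1|+|L_2|=10-k$ to locate its corners on $\partial G$, and then show that the enforced extra crossovers at the interior boundary vertices of $P$ cannot be accommodated in the remaining two panels, which must themselves be of corner-fan or narrow type, without producing either a parallel edge, a strongly interior edge in some panel, a degree-$3$ vertex, or a non-trivial critical separating cycle. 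Any of these outcomes contradicts the standing assumptions, so no such $G$ exists and the desired edge contraction is available.
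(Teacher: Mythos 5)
There is a genuine gap here, on two levels. First, the proposal is a plan rather than a proof: the decisive step is deferred to a ``combinatorial case analysis over the admissible multisets $(k_1,k_2,k_3)$'' which you announce but never carry out, so no contradiction is actually reached. Second, the plan rests on a false premise. You allow a panel with $k\ge 5$ crossover edges by asserting that its interior boundary vertices, having panel-degree $3$ in the ``double corner fan'', ``must carry at least one further crossover edge belonging to a different homology class.'' They cannot: an interior vertex of a maximal panel lies on $\partial G$ strictly between the two extreme crossover edges $e,e'$, so every edge at it is embedded in the disc region bounded by $e$, $e'$ and the two boundary arcs, and is therefore either a top-to-bottom diagonal of that same panel (same homology class) or a chord of the top or bottom path (trivial homology, excluded). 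This is exactly the observation the paper makes just before the lemma, and it immediately rules out every panel with more than $4$ crossover edges under your standing hypotheses (no strongly interior edge, no degree-$3$ vertex). Hence the only distribution is $(4,4,4)$ and your list $(6,6,0),\dots$ consists of configurations that do not exist; the case analysis you anticipate is both unnecessary and, as designed, unworkable.

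Moreover, even once one reaches the $(4,4,4)$ situation (three panels of the shape $v,v_1,v'$ over $w,w'$), your proposal has no mechanism left to produce a contradiction: the interior vertex $v_1$ already has degree $4$, so there are no ``enforced extra crossovers'' to push into the other panels. The actual contradiction in the paper comes from the requirement, which you correctly set up but never use in the endgame, that every $FF$ edge lies on a nonfacial $3$-cycle: applying this to the interior diagonal $v_1w$ forces an edge $v'w$ of a second homology class, and the disjointness of embedded edges then shows that the panel of that second class must be a fan through $v'$ or through $w$, which is incompatible with the $(2,1)$ panel shape that class is also required to have. Without this last step (or a substitute for it), the argument does not close.
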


\begin{proof}
By the previous discussion we may assume that the maximal panel subgraphs do not have a strongly interior edges. Also, by Lemma \ref{l:degree3notcrit}, we can assume that there are no vertices in $G$ of degree $3$. It follows that each such subgraph has at most $4$ crossover edges. 
Since there are $12$ crossover edges, by $(3,6)$-tightness, and at most $3$ crossover edge homology classes, it follows that there are $3$ panels, each with $4$ crossover edges. Thus the panel subgraphs have the form indicated in Figure \ref{f:smallpanel}.
\begin{center}
\begin{figure}[ht]
\centering
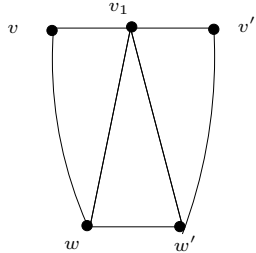
\caption{A panel subgraph with $4$ crossover edges and no interior degree $3$ vertex.}
\label{f:smallpanel}
\end{figure}
\end{center}

By Lemma \ref{l:notOnCritOrDegree3exists} we may assume that  every crossover edge is not on a critical cycle.  However, we now show that this is not possible, completing the proof. 

Consider a single pair of internal edges, $v_1w, v_1w'$ on one of the panel graphs. These edges and their panel subgraph are illustrated in  Figure \ref{f:irreducibleproof}. If  $v_1w$ lies on a nonfacial $3$-cycle then this is achieved through  edge $v_1v'$ of the panel and one of the $8$ remaining crossover edges with a different homology class.  This crossover edge is indicated in the figure with label $g$. 
Note however, that if $g'$ is another embedded edge of the same homology type as $g$ then, from the disjointness requirement,
$g'$ has the form $v'x$ (or the form $yw$), where $x$ (resp. $y$) lies on the arc from $v$ to $w$ (resp. $v'$ to $w'$) as indicated in the figure. It follows that the form of the panel  for this homology type cannot hold, and this contradiction completes the proof.
\end{proof}

\begin{center}
\begin{figure}[ht]
\centering
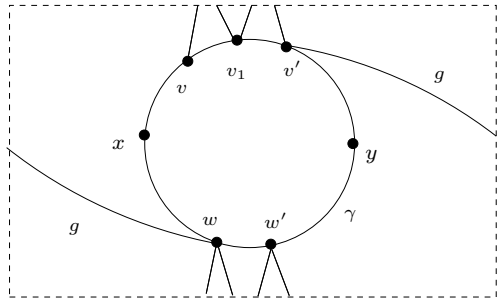
\caption{Only edges of the form $xv'$ or $vy$ have the same homology type as the edge $g$.}
\label{f:irreducibleproof}
\end{figure}
\end{center}

\section{Vertex splitting construction}

We now obtain an alternative proof of the equivalence of (i) and (ii) in the main theorem. Also we determine exactly the uncontractible graphs in $\T$ and with this we complete the proof of the equivalence of (i), (ii) and (iii).

\begin{lemma}\label{l:contractionkey} Every graph in $\T$ admits a contraction sequence in $\T$ to an uncontractible graph.
\end{lemma}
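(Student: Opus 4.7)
The plan is to induct on $|V(G)|$ and show that any $G \in \T$ which is not uncontractible admits a single $FF$-edge contraction to a strictly smaller graph in $\T$; iterating then yields the desired sequence, which must terminate at an uncontractible graph since vertex counts strictly decrease.

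For the inductive step, I would pick any contractible $FF$ edge $e_0$ of $G$. By Lemma \ref{l:toruskey}, either its contraction already lies in $\T$ (and we are done), or $e_0$ lies on a critical separating cycle. In the latter case, select among all such cycles the one whose associated subgraph $G_1 \in \T$ has minimal vertex count. Since $G_1$ is strictly smaller than $G$, the inductive hypothesis supplies either that $G_1$ is itself uncontractible, or a contractible $FF$ edge $e$ of $G_1$ whose contraction in $G_1$ remains in $\T$.

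In the generic subcase, $e$ is $FF$ in $G_1$, so both adjacent faces lie inside $G_1$ and hence $e$ is strictly interior to the boundary cycle $c$; in particular $e$ is also an $FF$ edge of $G$. Two lifting claims then need verification: (i) $e$ is contractible in $G$ — any obstructing non-facial $3$-cycle $xvw$ of $G$ must have $x \in G_2^\circ \setminus c$ (otherwise the $3$-cycle lies in $G_1$, contradicting contractibility there), but this configuration would produce a strictly smaller critical separating cycle through $e$, contradicting the minimality of $G_1$; (ii) the contracted graph equals $G_1' \cup G_2^\circ$ glued along the unchanged cycle $c$, and a $(3,6)$-count argument modelled on the proof of Lemma \ref{l:fissionmove} shows it lies in $\T$.

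In the remaining subcase, $G_1$ is uncontractible: by Lemma \ref{l:atmost9} it has no interior vertices and thus at most $9$ vertices, so its detachment map is one of those catalogued in Lemma \ref{l:smallgraphs}. Here a direct case check — in the spirit of the embedded-graph analysis of Section 5 — locates a contractible $FF$ edge of $G$, typically in the outer annular region $G_2^\circ$ or crossing $c$, whose contraction remains in $\T$, again invoking minimality of $c$. The main obstacle throughout is the lifting step: ensuring that contractibility and $(3,6)$-tightness of a contraction in the smaller graph $G_1$ transfer up to $G$. This is precisely what forces us both to select $G_1$ of minimal vertex count and to adapt the Lemma \ref{l:fissionmove} sparsity count to the present fusion-style setting.
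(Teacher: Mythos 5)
Your overall architecture (invoke Lemma \ref{l:toruskey}, pass to a critical separating cycle, recurse, terminate by finiteness) matches the paper's, but you recurse in the wrong direction and the two ``lifting claims'' you flag are exactly where the argument breaks. The paper never tries to lift a contraction from $G_1$ back to $G$; instead it locates the next candidate edge in the annular complement $G_2^\circ$, where both obstructions can be controlled directly: an $FF$ edge of $G_2^\circ$ lying on a nonfacial $3$-cycle of $G$ forces that $3$-cycle either to bound a disc of faces of $G$ (yielding a triangulated-sphere subgraph, inside which one can rechoose a good edge) or to enclose the hole (yielding a full torus subgraph, contradicting sparsity). Repeated application gives nested critical cycles $c_1, c_2, \dots$ whose inner graphs $G_1 \subsetneq J \subsetneq \cdots$ strictly grow, so the process terminates. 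None of this appears in your proposal.

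Concretely, the gaps in your version: (i) your minimality is taken over critical separating cycles through the \emph{original} edge $e_0$, so it says nothing about obstructions to the \emph{new} edge $e \subseteq G_1$; moreover a nonfacial $3$-cycle $xvw$ with apex $x$ in $G_2^\circ$ simply does not ``produce a strictly smaller critical separating cycle through $e$'' --- a nonfacial $3$-cycle and a critical separating cycle are unrelated objects, so this step is a non sequitur and $e$ may genuinely fail to be contractible in $G$ (note also that $G_1$ is not an induced subgraph of $G$, so even an apex in $V(c)$ can obstruct via edges of $G_2^\circ$). (ii) The sparsity transfer is not ``modelled on Lemma \ref{l:fissionmove}'': the final case of that proof ($K = K_1 \cup e$ with $K_1 \subseteq A$ meeting $H_i$ in two vertices) leans on the fact that the substituted graph $H_i$ satisfies $V(H_i) = V(\partial H_i)$, which fails for a general contracted $G_1/e$, so substituting $G_1/e$ for $G_1$ needs a separate argument you have not supplied. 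Finally, your subcase where $G_1$ is uncontractible defers all the work to an unspecified ``direct case check'' in $G_2^\circ$ --- but finding a good edge in $G_2^\circ$ is precisely the content of the paper's proof, and it is needed in every branch, not just this one. As written, the proposal does not close.
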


\begin{proof}
Let $G \in  \T$ and suppose that $G$ is a contractible torus with hole graph. We show that there exists a contractible edge for which the contraction yields a graph in $\T$.

Since $G$ is contractible there exists an $FF$ edge which is not on a nonfacial $3$-cycle. If the contracted graph is in $\T$ we may continue the argument with a smaller graph. So we may assume, by the critical cycle lemma, Lemma \ref{l:toruskey}, and the discussion preceding Corollary \ref{c:canconstruct}, that there exists a proper critical cycle $c_1$ and an associated division $G \to \{G_1, G_2^\circ\}$ where $G_1$ is in $\T$ and both $G_1$ and the annular graph $G_2^\circ$ have fewer vertices than $G$. 
Since  $G_2^\circ$ contains a face of $G$ it follows, again by the discussion preceding Corollary \ref{c:canconstruct} that  the annular graph contains an edge $e$ of $FF$ type.

We may assume, moreover, that the edge $e$ does not lie on a nonfacial $3$-cycle. To see this we note the two possibilities that hold if $e$ does lie on a nonfacial $3$-cycle. Either the $3$-cycle is triangulated by faces of $G$, or the triangulation of the $3$-cycle in the containing torus graph for $G$ contains the associated triangulated disc for the detachment map for $G$. In the latter case it follows that $G$ contains a fully triangulated torus graph, which violates $(3,6)$-sparsity, and so this case does not occur. In the former case $G$ contains  the graph of a triangulated sphere all of whose faces are faces of $G$. Such graphs have edges which are of $FF$ type and do not lie on a nonfacial $3$-cycle, and so we may rechoose $e$ to be such an edge.

Since the annular graph contains an edge of $FF$ type which is not on a nonfacial $3$-cycle  we may either contract with this edge to a smaller graph in $\T$ or, by the critical cycle lemma,  obtain another proper critical cycle, $c_2$ say, and an associated division $G \to \{H_1, H_2^\circ\}$ with $H_1 \in \T$, where these graphs have fewer vertices than $G$.

Note that we can assume that $c_2$ lies inside $c_1$, or, more precisely, that the detached triangulated disc for $c_2$ is contained in the detached triangulated disc for $c_1$. One way to see this is to note that 
the union $J$ of $G_1$ and $H_1$ is a proper subgraph of $G$ which lies in $\T$. Thus we may replace $c_2$ by the proper critical cycle for the detachment map for $J$. By the reasoning above there is a contractible edge in the associated annular graph which does not lie on a nonfacial $3$-cycle.

Repeating this process, identifying nested critical cycles $c_1, c_2\dots $,  we must eventually obtain a contractible edge for which the contracted graph lies in $\T$. Indeed, if this did not occur then  the process provides an infinite strictly increasing sequence of proper subgraphs of $G$ and this contradicts the finiteness of the graph.
\end{proof}

We have shown that the uncontractible graphs in $\T$  are $3$-rigid by the analysis in Sections 4 and 5. This, together with
Lemma \ref{l:contractionkey} provides a second proof of the equivalence of (i) and (ii) in the main theorem,
which avoids the use of fission moves.

In fact we can obtain a stronger result by enlarging the analysis of the small graphs given in Sections 3 and 5 to identify the uncontractible  graphs of $\T$.

\begin{thm}\label{t:theirreducibles}
The uncontractible graphs of $\T$ are the graphs $H_{16}$ and $H_{17}$.
\end{thm}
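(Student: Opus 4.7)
The plan is to combine the no-interior-vertex reduction with the enumeration of detachment types from Lemma~\ref{l:smallgraphs}. First I would invoke Lemma~\ref{l:atmost9} to conclude that any uncontractible $G\in\T$ satisfies $V(G)=V(\partial G)$, and then Lemma~\ref{l:9vhas_stronglyinterior} to exclude the case $|V(G)|=9$. This forces $|V(G)|=|V(\partial G)|\leq 8$, so by Lemma~\ref{l:smallgraphs} the detachment map of $G$ is of one of the types $\alpha_2,\ldots,\alpha_{17}$, with $|V(G)|$ equal to $|V(\partial H_i)|$ for the corresponding $i$.

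The next step is to show that for each such type the graph $G$ is isomorphic to the minimal representative $H_i$. Since $G$ has no interior vertices, its entire facial structure is determined by a triangulation of the (possibly degenerate) annular region complementary to the detached disc on the torus. The Maxwell count $|E(G)|=3|V(G)|-6$ prescribes the exact number of faces of this annulus, and the hole-filling Lemma~\ref{l:fillingin}, combined with simplicity and the $(3,6)$-sparsity condition, rules out completions other than the one realised by $H_i$. Carrying out this uniqueness analysis for each of the sixteen detachment types---using embedded-graph reasoning analogous to that behind Figure~\ref{f:nonplanarannulus} and the arguments of Lemma~\ref{l:fissionmove}---is the most laborious part of the proof.

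Once the identification $G\cong H_i$ is in place, the conclusion is immediate from the remark following Lemma~\ref{l:smallgraphs}: each of $H_2,\ldots,H_{15}$ contains an $FF$ edge whose contraction yields a simple graph, and such an edge by definition lies on no nonfacial $3$-cycle, so $H_i$ is contractible for $i\in\{2,\ldots,15\}$; conversely, $H_{16}$ and $H_{17}$ contain no such edge and are therefore uncontractible. The hard part, as noted, is the intermediate uniqueness step for the more degenerate detachment types ($\alpha_{12}$--$\alpha_{15}$, where the boundary graph contains repeated edges), where the interaction between the detachment map and the annular triangulation must be analysed type by type. A minor but necessary check is that in verifying uncontractibility of $H_{16}$ and $H_{17}$ directly one uses that every $FF$ edge of these graphs shares both of its endpoints with a common neighbour not lying on either of its two incident triangles, a property that can be read off Figures~\ref{f:k5minuse} and~\ref{f:H12}.
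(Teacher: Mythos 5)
Your reduction to the case $V(G)=V(\partial G)$ with $|V(G)|\leq 8$ via Lemmas \ref{l:atmost9} and \ref{l:9vhas_stronglyinterior} is exactly how the paper proceeds, and your final step (checking which of the $H_i$ possess an $FF$ edge lying on no nonfacial $3$-cycle) is also fine. The gap is the intermediate claim that a graph $G\in\T$ with no interior vertices and detachment map $\alpha_i$ must be isomorphic to the representative $H_i$. This is not true in general, and neither the Maxwell count nor Lemma \ref{l:fillingin} delivers it: the Maxwell count fixes the number of faces of the triangulated complement of the hole but not the triangulation itself, and for the types with $7$ or $8$ vertices there are combinatorially distinct triangulations compatible with simplicity and $(3,6)$-sparsity. (Already for type $9v$ the proof of Lemma \ref{l:9vhas_stronglyinterior} has to handle graphs that are not isomorphic to $H_1$, e.g.\ those containing a strongly interior panel edge.) The paper asserts uniqueness of the triangulation only for the most degenerate types $\alpha_{10},\dots,\alpha_{15}$; for $\alpha_2,\alpha_3,\alpha_4$ it instead assumes uncontractibility and derives a contradiction from an embedded-graph analysis (each $FF$ edge is not on a critical separating cycle by Lemma \ref{l:notOnCritOrDegree3exists}, hence must lie on a nonfacial $3$-cycle, which forces impossible edge placements or a hole-separating cycle of length less than $9$), and for $\alpha_5,\dots,\alpha_9$ it uses a degree count: uncontractibility forces all vertices to have degree at least $4$ by Lemma \ref{l:degree3notcrit}, which requires more crossover edges than $(3,6)$-tightness permits.

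So the statement you must prove in the laborious middle step is not ``$G\cong H_i$'' but rather ``no uncontractible graph of type $\alpha_i$ exists for $2\leq i\leq 15$,'' and the working tools are Lemmas \ref{l:degree3notcrit} and \ref{l:notOnCritOrDegree3exists} together with the embedded-graph picture on the torus, not a classification of triangulations. As written, your uniqueness step would fail for at least the $8$- and $7$-vertex types, and the rest of the argument cannot be completed from it.
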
 

Note first that $H_{16}$ and $H_{17}$ are uncontractible. Also it is straightforward to check, with the assistance of Lemmas  \ref{l:degree3notcrit} and \ref{l:notOnCritOrDegree3exists}, that the particular graphs $H_2, \dots , H_{15}$ are contractible
in $\T$, and indeed, are completely contractible to one of these two graphs. However, it remains to check that
every graph with no interior vertices and one of these boundary types is similarly contractible. To see that this is so we now argue in a somewhat ad hoc case-by-case manner. The arguments become progressively simpler as the number of vertices decreases. 

The $9v$ case, corresponding to the detachment map $\alpha_1$ is  covered by Lemma \ref{l:9vhas_stronglyinterior}. Alternatively one can effect a proof in the style of the following argument for the $v3v6$ case.

\subsection{The $\alpha_2, \alpha_3$ cases; types $v3v6$, $v4v5$, with $|V(G)|=8$.}

We may assume that there are no vertices of degree $3$ incident to $FF$ edges, in view of Lemma \ref{l:degree3notcrit}. Since the boundary is of type $v3v6$ the vertex $v$ is of degree $4$ or more. Also, every
other vertex is incident to an $FF$ edge (a crossover edge in fact) and so we may assume that  there are no degree $3$ vertices.

We consider the possibilities for an embedded graph representation of such a graph which has been "standardised" so that 

(i) the $v3v$ subcycle of the boundary cycle, with vertex sequence $v, x, y, v$, appears as the right hand boundary of a representing rectangle for $S^1 \times S^1$.

(ii) the first two edges, $vw, wz$ of the $v6v$ subcycle appear on the lower boundary of the representing rectangle for $S^1 \times S^1$.

The embedded representation of $G$ may now be partially indicated, as in Figure \ref{f:v3v6AdHoc}.

\begin{center}
\begin{figure}[ht]
\centering
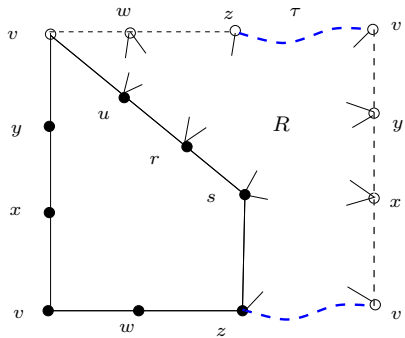
\caption{A partial embedded graph representation of a graph of type $v3v6$ with no interior vertices.}
\label{f:v3v6AdHoc}
\end{figure}
\end{center}

The two paths from $z$ to $v$ in the diagram represent a single path, $\tau$ say, in $S^1 \times S^1$ (and do not necessarily indicate a subpath of edges). The remaining embedded edges of $G$ are representable by paths in the region $R$ which may pass across $\tau$ a number of times.

The degree of  vertices being at least $4$ is indicated on the diagram by two emerging edge paths. Since there are $9$ crossover edges in total, there will be two further emerging directions, but we need not indicate this. For $u$ and $w$ these paths  must emerge properly into the open rectangle region $R$ within the outer boundaries.
The vertex $v$ also has at least $1$ emerging edge but in this case there are different possibilities in the embedded representation for emergence since the embedded edge may also start at the right hand vertices labelled $v$. 

Assume, by way of contradiction that $G$ is uncontractible by edge contraction.
Suppose first that the crossover edge $uw$ exists. We know that it does not lie on a critical cycle, by Lemma \ref{l:notOnCritOrDegree3exists}. Thus the edge $uw$ must  lie on a nonfacial $3$-cycle, since otherwise  $G$ is contractible in $\T$. From the standardised diagram it follows that $uw$ can only be represented by an embedded path, $\pi$ say, from $u$ to the lower representative of $z$. We may also assume that this path lies in $R$. This is a contradiction since, for example, it implies that there exists a hole separating cycle of length less than $9$, namely the $6$ cycle of edges $vz, zw,wv,vx,xy,yv$.

Since the edge $uw$ does not exist the face incident to $vu$ has  edges $va$ and $ua$ with $a \neq w$, and $va$ must be  represented by an embedded curve in $R$ starting from  the upper left representative of $v$. If $a$ is one of $r, s$ or $z$ then we obtain a contradiction as before. On the other hand since $G$ is simple, without loop edges or multiple edges between the same vertices, $a$ is not equal to $v, x$ or $y$, and so in all cases we have the desired contradiction.

There is similar argument for the case $v4v5$.

\subsection{The $\alpha_4$ case; type $e3e4$ with $|V(G)|=7$}
Let $G$ be a graph of this type and assume that $G$ is not contractible. Once again we may partially represent $G$ as an embedded graph
on the torus as indicated in Figure \ref{f:e3e4AdHoc}. By Lemma \ref{l:degree3notcrit} the degree of each of the vertices $z,r,s,x,y$
is at least $4$.  
\begin{center}
\begin{figure}[ht]
\centering
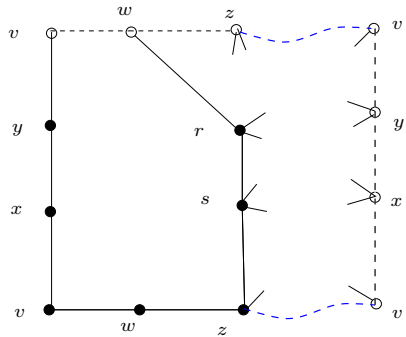
\caption{An embedded graph partial representation of $G$.}
\label{f:e3e4AdHoc}
\end{figure}
\end{center}
In particular there are pairs of edges from each vertex $r, s, y, x$ which are of $FF$ type and which (as before) must lie on nonfacial $3$-cycles. Considering an $FF$ edge out of $y$ the only possibilities are
(i)  $yw$, or (ii) $yz$, which requires $ys$, or (iii) $ys$ which requires either $sv$ or $yz$. Since there are at least $2$ edges out of $y$ it follows that (ii) or case (iii) must hold (even if the degree of $y$ exceeds $4$). 

Case (ii), with $yz$ and $ys$, cannot hold for the following reason. The  triangulation would then require the edge $ry$ and this edge would not lie on a nonfacial $3$-cycle (in any triangulation). Also, similarly, the remaining case (iii), with edges $sy$ and $sv$, would require the edge $sx$ and in any completing triangulation this would not lie on a nonfacial $3$-cycle.
Thus we obtain a contradiction in all cases.

\subsection{The cases $\alpha_5, \dots ,\alpha_9$.}
Suppose, by way of contradiction that there exists an uncontractible  graph where the detachment map has one of these forms, so that there are no repeated edges and exactly two repeated vertices. Since every edge of $\partial G$ lies on a face it follows from
Lemma \ref{l:degree3notcrit},
 that we may assume that each vertex is of degree at least $4$. Since there are $7$ vertices it follows that there are at least $7$ crossover edges. This is a contradiction since there are $9$ noncrossover edges and by $(3,6)$-tightness there are $15$ edges in total.

\subsection{The cases $\alpha_{10}, \dots , \alpha_{15}$.}
Consider a graph $G$ with no interior vertices and detachment map $\alpha_{10}$. In an embedded graph representation on the torus we can assume that
(in the edge identified rectangular representation) the repeated vertex $x$ appears in the corners and the repeated vertices $v$ and $w$ appear on opposite sides. Thus we have the representation in Figure
\ref{f:v1w2x1v2w1x2AdHoc}. In this case note that we can assume that
the boundary of the rectangle is provided by the edges of the $9$-cycle for the hole together with some edge repetitions.

\begin{center}
\begin{figure}[ht]
\centering
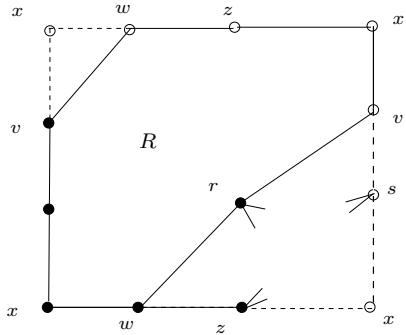
\caption{An embedded graph partial representation for the $\alpha_{10}$ case $v1w2x1v2w1x2$.}
\label{f:v1w2x1v2w1x2AdHoc}
\end{figure}
\end{center}

As before if we assume that $G$ is uncontractible then the degrees of the vertices $r, s$ and $z$ are at least $4$. By $(3,6)$-tightness counting there are $3$ crossover edges. It follows that the degrees are exactly $4$. We see that there is only one triangulation and that $G$ is equal to $H_{10}$ and is contractible. 

The case for type  $\alpha_{11}$ is entirely similar. In fact in each of the subsequent cases it is similarly straightforward to verify that there is a unique triangulation. Since the graphs $H_{12}, \dots , H_{15}$ are contractible the proof Theorem \ref{t:theirreducibles} is complete.


\bibliographystyle{abbrv}
\def\lfhook#1{\setbox0=\hbox{#1}{\ooalign{\hidewidth
  \lower1.5ex\hbox{'}\hidewidth\crcr\unhbox0}}}

\end{document}